\newtheorem{theorem}{Theorem}[section]
\newtheorem{lemma}[theorem]{Lemma}
\newtheorem{coro}[theorem]{Corollary}
\newtheorem{proposition}[theorem]{Proposition}
\newtheorem{Definition}{Definition}[section]
\newtheorem{otherth}{\bf Theorem}
\newtheorem{otherl}{\bf Lemma}
\renewcommand{\Re}{\mathrm{Re}}
\newcommand{\D}{\mathbb D}
\renewcommand{\Re}{\mbox{Re}}
\numberwithin{equation}{section}
\begin{document}

\title[Reproducing kernel estimates, projections and duality]{Reproducing kernel estimates, bounded projections and duality on large weighted Bergman spaces.}
\

\author[H. Arroussi]{Hicham Arroussi}
\address{Hicham Arroussi \\Departament de Matem\`{a}tica Aplicada i Analisi\\
Universitat de Barcelona\\ Gran Via 585 \\
08007 Barcelona\\
Spain} \email{arroussihicham@yahoo.fr}

\author[J. Pau]{Jordi Pau}
\address{Jordi Pau \\Departament de Matem\`{a}tica Aplicada i Analisi\\
Universitat de Barcelona\\ Gran Via 585 \\
08007 Barcelona\\
Spain} \email{jordi.pau@ub.edu}

\subjclass[2010]{30H20, 46E15}
\keywords{weighted Bergman spaces, bounded projections, reproducing kernels}

\thanks{The second author was
 supported by SGR grant $2009$SGR $420$ (Generalitat de
Catalunya) and DGICYT grant MTM$2011$-$27932$-$C02$-$01$
(MCyT/MEC)}

\maketitle


\begin{abstract}
We obtain certain estimates for the reproducing kernels of large weighted Bergman spaces. Applications of these estimates to boundedness of the Bergman projection on $L^p(\D,\omega ^{p/2})$, complex interpolation and duality of weighted Bergman spaces are given.
\end{abstract}


\section{Introduction}

Let $\D=\{z\in \mathbb{C}:|z|<1\}$ be the unit disk in the complex plane $\mathbb{C}$, $dA(z) = \frac{dxdy}{\pi}$ be the normalized area measure on $\D,$  and let $H(\D)$ denote the space of all analytic functions on $\mathbb{D}.$
A weight is a positive function $\omega\in L^1(\D, dA).$ When $\omega(z) = \omega(|z|)$ for all $z\in \D,$ we say that  $\omega$ is radial.

For $0 < p < \infty ,$ the weighted Bergman space $A^p(\omega)$ is the space of all  functions $f\in H(\D)$ such that
\[
\|f \|_{A^p(\omega)} =\bigg( \int_{\D}|f(z)|^p \,\omega(z) \, dA(z)\bigg)^{\frac{1}{p}} < \infty.
\]
Our main goal is to study certain properties of the Bergman spaces $A^p(\omega)$ for a large class of weights, including certain rapidly radial decreasing weights, that is, weights that are going to decrease faster than any standard weight $(1-|z|^2)^\alpha,$ $\alpha > 0,$ such as the exponential type weights
\begin{equation}\label{Exp}
\omega_{\alpha}(z) = \exp\left(\frac{-c}{(1-|z|^2)^\alpha}\right)  ,   \  \alpha > 0 ,   \   c > 0 .
\end{equation}
For the weights $\omega$ considered in this paper, for each $z\in \D$ the point evaluations $L_ z$ are bounded linear functionals on $A^p(\omega)$. In particular, the space $A^2(\omega)$ is a reproducing kernel Hilbert space:  for each $z\in \D$, there are functions $K_ z\in A^2(\omega)$ with $\|L_ z\|=\|K_ z\|_{A^2(\omega)}$ such that $L_ z f=f(z)=\langle f, K_ z \rangle _{\omega}$, where
\[\langle f,g \rangle_{\omega}=\int_{\D} f(z)\,\overline{g(z)} \,\omega(z)\,dA(z)\]
is the natural inner product in $L^2(\D,\omega dA)$. The function $K_z$ has the property that $K_ z(\xi)=\overline{K_{\xi}(z)}$, and is called the reproducing kernel for the Bergman space $A^2(\omega).$  It is  straightforward to see from the previous formula that the orthogonal (Bergman) projection from $L^2(\D,\omega dA)$ to $A^2(\omega)$ is given by
\begin{equation}\label{BP}
 P_\omega f(z) = \int_{\D}{f(\xi) \overline{K_z (\xi)} \, \omega(\xi) \ dA(\xi)}.
\end{equation}
Some basic properties of the Bergman spaces with radial rapidly decreasing weights are not yet well understood and
have attracted some attention in recent years \cite{Ass,D2,GaPau1,KrMc,LR2,PauPel2}. The interest in such spaces arises from the fact that the usual techniques for the standard Bergman spaces fail to work in this context, and therefore new tools must be developed. For example,
the natural Bergman projection $P_{\omega}$ is not necessarily bounded on $L^p(\D,\omega dA)$ unless $p=2$ (see \cite{D1} for the exponential weights and \cite{Zey} for more examples). Another difficulty when studying these spaces arises from the lack of an explicit expression of the reproducing kernels.

It turns out that when studying properties or operators such as the Bergman projection where the reproducing kernels are involved, the most convenient setting are the spaces $A^p(\omega^{p/2})$ (or the weighted Lebesgue spaces $L^p(\omega ^{p/2}):=L^p(\D,\omega^{p/2}dA)$), and our first main result is that, for the class of weights $\omega$ considered, the Bergman projection $P_{\omega}$ is bounded from $L^p(\omega^{p/2})$ to $A^p(\omega^{p/2})$ for $1\le p<\infty$ (see Theorem \ref{projbd}).
A consequence of that result will be the identification of the dual space of $A^p(\omega^{p/2})$ with the space $A^{p'}(\omega^{p'/2})$ under the natural integral pairing $\langle \,,\,\rangle_{\omega}$, where $p'$ denotes the conjugate exponent of $p$.

The key ingredient for the obtention of the previous mentioned results is a certain integral type estimate involving the reproducing kernels $K_ z$. This integral estimate will be deduced from a pointwise estimate for $|K_ z(\xi)|$ that can be of independent interest (Theorem \ref{RK-PE}). The pointwise estimate obtained can be thought as the analogue for weighted Bergman spaces of the corresponding ones obtained by Marzo and Ortega-Cerd\`{a} \cite{M-OC} for reproducing kernels of weighted Fock spaces.

Throughout this work, the letter $C$ will denote an absolute constant whose value may change at different occurrences. We also use the notation
$a\lesssim b$ to indicate that there is a constant $C > 0$ with $a \leq C b,$ and the notation $a \asymp b$ means that $a \lesssim b$ and $b \lesssim a.$

\section{Preliminaries}

In this section we provide the basic tools for the proofs of the main results of the paper. For $a\in \D$ and $\delta>0$, we use the notation $D(\delta\tau(a))$ for the euclidian disc centered at $a$ and radius $\delta \tau(a)$.

A positive function $\tau$ on $\D$ is said to belong to the class $\mathcal{L}$ if satisfies the following two properties:
\begin{itemize}
\item[(A)] There is a constant $c_ 1$ such that $\tau(z)\le c_ 1\,(1-|z|)$ for all $z\in \D$;\\
\item[(B)] There is a constant $c_ 2$ such that $|\tau(z)-\tau(\zeta)|\le c_ 2\,|z-\zeta|$ for all $z,\zeta\in \D$.\\
\end{itemize}
 We also use the notation
 \[m_\tau : = \displaystyle \frac{\min(1, c_1^{-1}, c_2^{-1})}{4},\]
  where $c_1$ and $c_2$ are the constants appearing in the previous definition.
It is easy to see from conditions (A) and (B) (see \cite[Lemma 2.1]{PauPel1}) that if $\tau \in\mathcal{L}$  and $ z \in  D(\delta\tau(a)) ,$ then
\begin{equation}\label{asymptau}
 \frac{1}{2}\,\tau(a)\leq \tau(z) \leq  2 \,\tau(a),
\end{equation}
 for sufficiently small $\delta > 0 ,$ that is, for $\delta \in (0, m_\tau).$ This fact will be used several times in this work.

\begin{Definition}
We say that a weight $\omega$ is in the class  $\mathcal{L}^*$ if it is of the form $\omega =e^{-2\varphi}$, where $\varphi \in C^2(\D)$ with $\Delta \varphi>0$, and  $\big (\Delta \varphi (z) \big )^{-1/2}\asymp \tau(z)$, with $\tau(z)$ being a function in the class $\mathcal{L}$. Here $\Delta$ denotes the classical Laplace operator.
\end{Definition}
The following result is from \cite[Lemma 2.2]{PauPel1} and can be thought as some type of generalized sub-mean value property for $|f|^p\,\omega$ that gives the boundedness of the point evaluation functionals on $A^p(\omega)$.

\begin{otherl}\label{subHarmP}
Let $\omega \in \mathcal{L}^*$, $0<p<\infty$ and let $z\in \D$. If $\beta \in \mathbb{R}$ there exists $ M \geq 1$ such that
\[ |f(z)|^p \omega(z)^{\beta} \leq  \frac{M}{\delta^2\tau(z)^2}\int_{D(\delta\tau(z))}{|f(\xi)|^p \omega(\xi)^{\beta} \   dA(\xi)},\]
for all $ f \in H(\mathbb{D})$  and all $\delta > 0 $ suficiently small.
\end{otherl}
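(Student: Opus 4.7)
The plan is to reduce the inequality to the classical sub-mean value property for subharmonic functions by showing that, on disks of radius comparable to $\tau(z)$, the weight $\omega^\beta = e^{-2\beta\varphi}$ agrees with the exponential of a harmonic function up to a bounded multiplicative error. The key structural input is that $\Delta\varphi \asymp \tau^{-2}$ together with the slow variation of $\tau$ given by \eqref{asymptau}.

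First, I would fix $z \in \D$ and $\delta \in (0, m_\tau)$, and set $r = \delta\tau(z)$ and $D_r = D(\delta\tau(z))$. By \eqref{asymptau} we have $\tau(\xi) \asymp \tau(z)$ on $D_r$, so the defining property of the class $\mathcal{L}^*$ gives $\Delta\varphi(\xi) \asymp \tau(z)^{-2}$ uniformly on $D_r$. Next, I would solve the Dirichlet problem $\Delta v = \Delta\varphi$ in $D_r$ with $v \equiv 0$ on $\partial D_r$, and set $u := \varphi - v$, which is then harmonic on $D_r$. Using the explicit Green's function $G_r$ of the disk $D_r$ and the estimate $\int_{D_r} |G_r(\xi,\eta)|\,dA(\eta) \lesssim r^2$, the pointwise bound $|\Delta\varphi| \lesssim \tau(z)^{-2}$ yields
\[
|v(\xi)| \lesssim \frac{r^2}{\tau(z)^2} = \delta^2 \qquad (\xi \in D_r).
\]
In particular $|v| \le C_0$ for some absolute constant $C_0$, so $e^{-2\beta u(\xi)} \asymp e^{-2\beta\varphi(\xi)} = \omega(\xi)^\beta$ on $D_r$, with comparability constants depending only on $\beta$.

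Second, I would apply subharmonicity. Since $f \in H(\D)$, the function $\log|f|^p = p\log|f|$ is subharmonic, and $-2\beta u$ is harmonic on $D_r$; hence the sum $p\log|f| - 2\beta u$ is subharmonic, and consequently the non-negative function $|f|^p e^{-2\beta u}$ is subharmonic on $D_r$. The standard sub-mean value inequality at the center $z$ (with the normalization $dA = dx\,dy/\pi$) gives
\[
|f(z)|^p e^{-2\beta u(z)} \leq \frac{1}{\delta^2 \tau(z)^2} \int_{D_r} |f(\xi)|^p e^{-2\beta u(\xi)} \, dA(\xi).
\]
Replacing $e^{-2\beta u}$ by $\omega^\beta$ on both sides, using the comparability established in the first step, yields the lemma with a constant $M$ depending only on $\beta$ (and the constants implicit in $\Delta\varphi \asymp \tau^{-2}$).

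The main obstacle is the bound $|v| \lesssim \delta^2$, which requires the Green's function estimate together with the scale-invariant conclusion $\Delta\varphi \asymp \tau(z)^{-2}$ on $D_r$; everything else — the subharmonicity of $|f|^p e^{-2\beta u}$ and the weight swap — is routine once the harmonic/bounded decomposition $\varphi = u + v$ is in hand.
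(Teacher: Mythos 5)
Your proof is correct: the decomposition $\varphi=u+v$ on $D(\delta\tau(z))$, with $v$ the Green potential of $\Delta\varphi$ satisfying $|v|\lesssim\delta^2$ (hence bounded) and $u$ harmonic, followed by the sub-mean value inequality for the subharmonic function $|f|^p e^{-2\beta u}$ and the weight swap, is exactly the standard argument. Note that the paper does not prove this lemma itself but imports it from \cite[Lemma 2.2]{PauPel1}, and the proof given there is essentially the one you reconstructed, so there is nothing to flag beyond the routine technicality that $\Delta v=\Delta\varphi$ should be read distributionally (with Weyl's lemma giving the harmonicity of $u$) when $\Delta\varphi$ is only assumed continuous.
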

It can be seen from the proof given in \cite{PauPel1} that one only needs $f$ to be holomorphic in a neighbourhood of $D(\delta \tau(z))$
Another consequence of the above result is that the Bergman space $A^p(\omega ^{\beta})$ is a Banach space when $1 \leq p < \infty$ and a complete metric space when $0 < p < 1 $.\\

Since the norm of the point evaluation functional equals the norm of the reproducing kernel in $A^2(\omega),$  the result of Lemma \ref{subHarmP} also gives an upper bound for $\|K_z\|_{A^2(\omega)} $. The next result \cite{LR1} says that (at least for some class of weights) this upper bound is the corresponding growth of the reproducing kernel.
\begin{otherl}\label{RK-norm}
Let $ \omega \in \mathcal{L}^*$ and suppose that the associated function $\tau(z)$ also satisfies the condition
\begin{itemize}
\item[(C)] there are constants
$b>0$ and $0<t<1$ such that
 \[\tau(z)\le
\tau(\xi)+t|z-\xi|,\quad\text{ for }|z-\xi|>b\,\tau(\xi).\]
\end{itemize}
Then
\begin{equation*}
\|K_z\|_{A^2(\omega)}^2 \, \omega(z)  \asymp \frac{1}{\tau(z)^2},\qquad z\in \D.
\end{equation*}
\end{otherl}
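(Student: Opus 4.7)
My plan is to prove the two inequalities separately. The upper bound needs only Lemma \ref{subHarmP}, applied to $f = K_z$ with $p=2$, $\beta=1$ and a small $\delta\in(0,m_\tau)$. By the reproducing property, $K_z(z)=\|K_z\|_{A^2(\omega)}^2$, and so
\[
|K_z(z)|^2\,\omega(z) \le \frac{M}{\delta^2\tau(z)^2}\int_{D(\delta\tau(z))}|K_z(\xi)|^2\,\omega(\xi)\,dA(\xi) \le \frac{M}{\delta^2\tau(z)^2}\|K_z\|_{A^2(\omega)}^2.
\]
Dividing by $|K_z(z)|^2=\|K_z\|_{A^2(\omega)}^4$ yields $\|K_z\|_{A^2(\omega)}^2\,\omega(z)\lesssim\tau(z)^{-2}$, so this direction does not even invoke condition (C).

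For the reverse inequality, the natural strategy is to exhibit a peak function $f_z\in A^2(\omega)$ satisfying $|f_z(z)|\ge \tfrac12$ and $\|f_z\|_{A^2(\omega)}^2\lesssim \tau(z)^2\omega(z)$. Once such an $f_z$ is available, the reproducing identity $f_z(z)=\langle f_z,K_z\rangle_\omega$ together with Cauchy--Schwarz gives
\[
\tfrac14 \le |f_z(z)|^2 \le \|f_z\|_{A^2(\omega)}^2\,\|K_z\|_{A^2(\omega)}^2 \lesssim \tau(z)^2\omega(z)\,\|K_z\|_{A^2(\omega)}^2,
\]
which is exactly the sought lower bound $\|K_z\|_{A^2(\omega)}^2\omega(z)\gtrsim\tau(z)^{-2}$.

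The peak function is built via H\"ormander's $\bar\partial$-theorem. Take a smooth cutoff $\chi_z\equiv 1$ on $D(r\tau(z))$, supported in $D(2r\tau(z))$ for some small $r<m_\tau$, with $|\bar\partial\chi_z|\lesssim \tau(z)^{-1}$. Writing $\omega=e^{-2\varphi}$ and using $\Delta\varphi\asymp\tau^{-2}$, H\"ormander's estimate produces a solution $u$ of $\bar\partial u=\bar\partial\chi_z$ with
\[
\|u\|_{L^2(\omega)}^2 \lesssim \int_\D \frac{|\bar\partial\chi_z|^2}{\Delta\varphi}\,\omega\,dA \lesssim \int_{D(2r\tau(z))}\omega\,dA \lesssim \tau(z)^2\,\omega(z),
\]
where the last inequality uses $\omega(\xi)\asymp\omega(z)$ on $D(2r\tau(z))$ (a standard consequence of $\Delta\varphi\asymp\tau^{-2}$ and the Lipschitz property of $\tau$). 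Then $f_z:=\chi_z-u$ is holomorphic on $\D$ and inherits the required $L^2$-bound; since $\bar\partial\chi_z$ vanishes on $D(r\tau(z))$, the function $u$ is in fact holomorphic there.

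The main obstacle is the quantitative estimate $|u(z)|\le \tfrac12$, needed to secure $|f_z(z)|=|1-u(z)|\ge \tfrac12$. A direct application of Lemma \ref{subHarmP} to $u$ on $D(r\tau(z))$ only gives $|u(z)|^2\omega(z)\lesssim 1$, of the correct order but not numerically small. The standard remedy is to run H\"ormander with the modified subharmonic weight $2\varphi+2\log|\xi-z|$: the $\log|\xi-z|$ factor forces the new solution to vanish at $z$, at the cost of a slightly worse integrand near $z$ which is harmless because $\bar\partial\chi_z$ is supported away from $z$. Condition (C) enters precisely here, ensuring that $\tau$ cannot vary too wildly so that the weights $\omega$ and $\omega/|\xi-z|^2$ remain quantitatively compatible at long range and the $L^2$-estimate on $u$ stays of size $\tau(z)^2\omega(z)$. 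With this refinement the peak function has $u(z)=0$, and the argument closes.
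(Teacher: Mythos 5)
The paper does not actually prove this lemma: it is quoted from Lin--Rochberg \cite{LR1}, so your argument has to stand on its own. Your upper bound is correct (and indeed needs no condition (C)). The lower bound, however, has a genuine gap at the step where you assert that $\omega(\xi)\asymp\omega(z)$ on $D(2r\tau(z))$ is ``a standard consequence of $\Delta\varphi\asymp\tau^{-2}$ and the Lipschitz property of $\tau$''. It is not: the hypotheses constrain only the Laplacian of $\varphi$ and say nothing about its harmonic part, which can oscillate by an unbounded amount on the discs $D(\delta\tau(z))$. For instance, take $\omega_0$ to be the weight \eqref{Exp} with $\alpha=1$, so that $\tau(z)\asymp(1-|z|^2)^{3/2}$, and set $h=\Re\left(A(1-z)^{-\beta}\right)$ with $1/2<\beta<1$; then $e^{2h}\omega_0$ is still an integrable weight in $\mathcal{L}^*$ with the same $\tau$ (hence still satisfying (C)), yet $\sup_{\xi\in D(\delta\tau(z))}\omega(\xi)/\omega(z)$ grows like $\exp\left(c\,\delta(1-|z|)^{1/2-\beta}\right)\to\infty$. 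The paper's own class $\mathcal{E}$ explicitly contains weights $|f|^p\omega$ with $f$ non-vanishing holomorphic, for which the same failure occurs. For such weights both $\|\chi_z\|_{L^2(\omega)}^2$ and your bound for the H\"ormander datum exceed $\tau(z)^2\omega(z)$ by an unbounded factor, so your peak function does not have the required norm. The standard repair is to peel off the harmonic part: on $D(\delta\tau(z))$ write $\varphi=h+G$ with $h$ harmonic and $|G|\lesssim 1$ (this does follow from $\Delta\varphi\asymp\tau^{-2}$), take $H$ holomorphic there with $\Re H=h$, and use the datum $e^{H-H(z)}\chi_z$ in place of $\chi_z$; then $|e^{H(\xi)-H(z)}|^2\omega(\xi)\asymp\omega(z)$ on the disc and your estimates go through.

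There is a second, quantitative problem in your final step. Adding $2\log|\xi-z|$ to $2\varphi$ does force the minimal solution to vanish at $z$, but the resulting inequality is $\int_\D|u|^2\,\omega\,|\xi-z|^{-2}dA\lesssim \omega(z)$ (after the repair above), and since $|\xi-z|$ is of size $\tau(z)$ only on the support of $\overline\partial\chi_z$ while it is of size $1$ on most of $\D$, this yields only $\int_\D|u|^2\omega\,dA\lesssim\omega(z)$, losing the factor $\tau(z)^2$ and giving merely $\|K_z\|^2_{A^2(\omega)}\omega(z)\gtrsim 1$. One needs a normalized singularity such as $\log\left(|\xi-z|^2/(|\xi-z|^2+\tau(z)^2)\right)$, which is nonpositive, bounded below on the support of the datum, and still non-integrable enough at $z$ to force $u(z)=0$; one must then also verify that the perturbed weight keeps a positive Laplacian (compare the care taken in Lemma \ref{Lfi}). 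Finally, note that nowhere does your sketch actually use condition (C): all your estimates are local to $D(\delta\tau(z))$, where \eqref{asymptau} suffices. The vague appeal to (C) at the end is not an argument, and its absence from the working parts of the proof is a further sign that the sketch, as written, does not yet establish the lemma.
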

The result in Lemma \ref{RK-norm} has also been obtained \cite{BDK,PauPel1} for radial weights $\omega \in \mathcal{L}^*$ for which the associated function $\tau(|z|)=\tau(z)$ decreases to $0$ as $r=|z|\rightarrow 1^{-}$, $\tau '(r)\rightarrow 0$ as $r\rightarrow 1$, and moreover, either there exist  a constant $C>0$  such that
$\tau(r)(1-r)^{-C}$ increases for $r$ close to $1$ or
 $\lim_{r\to 1^-}\tau'(r)\log\frac{1}{\tau(r)}=0.$

If a function $\tau$ satisfies the condition (C), it does not necessarily hold that $k\tau(z)$ satisfies the same condition (C) for all $k>0$ (an example of this phenomena are the standard weights $(1-|z|^2)^{\beta}$), but this is true for the exponential type weights given by \eqref{Exp}, and therefore these weights satisfies the following strongest condition
\begin{itemize}
\item[(E)] For each $m\ge 1$, there are constants
$b_ m>0$ and $0<t_ m<1/m$ such that
\[\tau(z)\le
\tau(\xi)+t_ m|z-\xi|,\quad\text{ for }|z-\xi|>b_ m\,\tau(\xi).\]
\end{itemize}
This leads us to the following definition.
\begin{Definition}
A weight $\omega$ is in the class $\mathcal{E}$ if $\omega \in \mathcal{L}^*$ and its associated function $\tau$ satisfies the condition (E).
\end{Definition}
The prototype of a weight in the class $\mathcal{E}$ are the exponential type weights given by \eqref{Exp}. An example of a non radial weight in the class $\mathcal{E}$ is given by $\omega_{p,f}(z)=|f(z)|^p\,\omega(z)$, where $p>0$, $\omega$ is a radial weight in the class $\mathcal{E}$, and $f$ is a non-vanishing analytic function in $A^p(\omega)$.\\

In order to obtain pointwise estimates for the reproducing kernels we will need the classical  H\"ormander's theorem \cite[Lemma 4.4.1]{Ho} on $L^2$-estimates for solutions of the $\overline{\partial}$-equation and also a variant due to Berndtsson \cite[Lemma 2.2]{Bern}.

\begin{otherth}[H\"ormander]\label{hormander}
Let $\varphi \in C^2(\D)$  with $\Delta\varphi > 0$ on $\D.$  Then  there exists a solution $u$ of the equation $\overline{\partial}u = f$ such that \[\int_{\D} |u(z)|^2\, e^{-2\varphi(z)} \,dA(z) \leq \int_{\D}\frac{|f(z)|^2}{\Delta\varphi(z)}\, e^{-2\varphi(z)}\, dA(z) ,\]
provided the right hand side integral is finite.
\end{otherth}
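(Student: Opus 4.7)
The plan is to prove this by the standard Hörmander duality argument. Working in the weighted Hilbert space $H\defeq L^2(\D,e^{-2\varphi}\,dA)$, I would regard $\bar\partial$ as a densely defined, closed, unbounded operator on $H$. A direct integration by parts on test functions identifies its formal adjoint as
\[
\bar\partial^{\,*} g \;=\; -\partial_z g + 2(\partial_z\varphi)\,g, \qquad g\in C_c^\infty(\D).
\]
The whole problem is then reduced, by duality, to establishing the a priori estimate
\[
\Bigl|\int_\D f\,\bar g\,e^{-2\varphi}\,dA\Bigr|^2 \;\lesssim\; \Bigl(\int_\D \frac{|f|^2}{\Delta\varphi}\,e^{-2\varphi}\,dA\Bigr)\,\|\bar\partial^{\,*} g\|_H^{\,2}
\]
for every $g\in C_c^\infty(\D)$. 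Granted this, I would define a linear functional $\Lambda$ on the range of $\bar\partial^{\,*}$ by $\Lambda(\bar\partial^{\,*} g)\defeq\int_\D f\,\bar g\,e^{-2\varphi}\,dA$; the a priori inequality makes $\Lambda$ well defined and continuous with norm at most $\bigl(\int_\D |f|^2/\Delta\varphi\cdot e^{-2\varphi}\,dA\bigr)^{1/2}$, Hahn--Banach extends it to all of $H$, and the Riesz representation theorem produces a $u\in H$ with the required norm bound satisfying $\langle u,\bar\partial^{\,*} g\rangle_H = \langle f,g\rangle_H$ for every test $g$, which is exactly $\bar\partial u=f$ in the sense of distributions.

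The analytic heart of the argument is the a priori estimate, which I would derive from the \emph{Bochner--Kodaira identity}. A short calculation of the commutator on smooth test functions gives
\[
[\bar\partial,\bar\partial^{\,*}] \;=\; 2\,\partial_z\partial_{\bar z}\varphi \;=\; \tfrac{1}{2}\,\Delta\varphi,
\]
acting as multiplication by this positive function. Since $\langle[\bar\partial,\bar\partial^{\,*}]g,g\rangle_H = \|\bar\partial^{\,*} g\|_H^{\,2} - \|\partial_{\bar z} g\|_H^{\,2}$ for $g\in C_c^\infty(\D)$, discarding the nonnegative term $\|\partial_{\bar z}g\|_H^{\,2}$ yields the lower bound
\[
\int_\D \Delta\varphi\,|g|^2\,e^{-2\varphi}\,dA \;\lesssim\; \|\bar\partial^{\,*} g\|_H^{\,2}.
\]
A Cauchy--Schwarz step in $L^2(e^{-2\varphi}\,dA)$ with the positive weight $\Delta\varphi$, by writing $f\,\bar g = (f/\sqrt{\Delta\varphi})\cdot(\sqrt{\Delta\varphi}\,\bar g)$, converts this lower bound into the desired control of $|\langle f,g\rangle_H|$.

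I expect the main technical obstacle to be the noncompactness of $\D$: the Bochner--Kodaira identity above is derived for $g\in C_c^\infty(\D)$, whereas the duality argument really needs the inequality for every $g$ in the Hilbert-space domain of $\bar\partial^{\,*}$. The standard remedy is an exhaustion argument: fix an increasing sequence of relatively compact subdomains $\D_j\nearrow\D$, solve the $\bar\partial$-problem on each $\D_j$ (where $C_c^\infty$ is a core for $\bar\partial^{\,*}$ in the graph norm) with the uniform $L^2$ bound coming from the estimate above, and extract a weakly convergent subsequence in $H$; the uniform bound passes to the weak limit, which then solves $\bar\partial u=f$ on all of $\D$. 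The smoothness of $\varphi$ and the strict positivity of $\Delta\varphi$ keep all constants under control and ensure that the limit lies in $H$.
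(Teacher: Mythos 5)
The paper does not actually prove this statement: it is Theorem A, imported verbatim from H\"ormander's book (the citation [Ho, Lemma 4.4.1]), so there is no internal argument to compare yours against. Your outline is the standard functional-analytic proof of that classical theorem and is essentially sound: the identification of the formal adjoint $\overline{\partial}^{\,*}g=-\partial g+2(\partial\varphi)g$ in $L^2(\D,e^{-2\varphi}dA)$, the commutator computation $[\overline{\partial},\overline{\partial}^{\,*}]=2\partial\overline{\partial}\varphi=\tfrac12\Delta\varphi$, the Cauchy--Schwarz step, and the Hahn--Banach/Riesz duality are all correct. Two caveats are worth recording. First, the argument as you run it does not give the constant $1$ in the statement: the basic identity yields $\tfrac12\int_{\D}\Delta\varphi\,|g|^2e^{-2\varphi}dA\le\|\overline{\partial}^{\,*}g\|_H^2$, so duality produces $\int_{\D}|u|^2e^{-2\varphi}dA\le 2\int_{\D}(|f|^2/\Delta\varphi)e^{-2\varphi}dA$; this is immaterial here, since the paper only ever uses the estimate up to a multiplicative constant, but your $\lesssim$ is honest and the quoted constant $1$ is not what falls out. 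Second, the genuinely technical point is exactly the one you flag at the end: on a bounded (sub)domain the Hilbert-space domain of $\overline{\partial}^{\,*}$ carries a boundary condition (in one variable, vanishing of $g$ on the boundary), and compactly supported forms are a core only after a cutoff-plus-mollification argument that exploits precisely that vanishing; asserting the core property and the exhaustion/weak-limit step without proof is acceptable for a quoted classical result, but that is where the real work in H\"ormander's proof lives, not in the Bochner--Kodaira algebra.
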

Recall that the operators $\partial$ and $\overline{\partial}$ are defined by
\begin{displaymath}
\partial:=\frac{\partial}{\partial z}=\frac{1}{2} \left (\frac{\partial}{\partial x}-i \frac{\partial}{\partial y}\right ) \quad \textrm{and}\quad \overline{\partial}:=\frac{\partial}{\partial \bar{z}}=\frac{1}{2} \left (\frac{\partial}{\partial x}+i \frac{\partial}{\partial y}\right )
\end{displaymath}
provided the use of the identification $z=x+i y$  is made. Also $\Delta=4\partial \overline{\partial}$.

\begin{otherth}[Berndtsson]\label{Bern}
Let $\varphi, \psi\in C^2(\D)$, with $\Delta\psi > 0$ on $\D.$ If
\[\bigg|  \frac{\partial\psi}{\partial w}\bigg|^2 \leq C_1 \Delta\varphi ,\,\,   with \,\,  0 <  C_1 < 1,\]
and for any $g$ one can find $v$ such that $\overline{\partial}v = g$  with
\begin{equation*}\label{BernEqua}
 \int_{\D} |v(z)|^2 \,e^{-2\varphi(z)-2\psi(z)}\, dA(z) \leq \int_{\D} \frac{|g(z)|^2}{\Delta\varphi(z)}\, e^{-2\varphi(z)-2\psi(z)} \,dA(z) ,
\end{equation*}
then for the  solution $v_0$ with minimal norm in $L^2(\mathbb{D}, e^{-2\varphi} dA),$ one has
 \[\int_{\D} |v_0(z)|^2\, e^{-2\varphi(z)+2\psi(z)}\, dA(z) \leq  C  \int_{\D} \frac{|g(z)|^2}{\Delta\varphi(z)}\, e^{-2\varphi(z)+2\psi(z) } \,dA(z) ,\]
where $C = 6/(1- C_1) .$
\end{otherth}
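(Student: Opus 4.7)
The plan is to establish a twisted Hörmander estimate via a duality argument, exploiting the fact that $v_0$, being the minimal-norm solution in $L^2(\D, e^{-2\varphi}dA)$, is orthogonal to the holomorphic subspace of that Hilbert space. The key identity I would aim for is
\[
\int_\D |v_0|^2\, e^{-2\varphi+2\psi}\,dA \;=\; \int_\D v_0\,\overline{v_0 e^{2\psi}}\,e^{-2\varphi}\,dA,
\]
which converts the target estimate into a pairing in the base Hilbert space where the orthogonality lives.

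Setting $H := v_0 e^{2\psi}$, a direct computation gives $\overline{\partial} H = e^{2\psi}(g+2v_0\overline{\partial}\psi)$. The standing hypothesis, applied to this right-hand side as data, then supplies a solution $U$ of $\overline{\partial}U=\overline{\partial} H$ with
\[
\int_\D |U|^2\, e^{-2\varphi-2\psi}\,dA \;\leq\; \int_\D \frac{|g+2v_0\overline{\partial}\psi|^2}{\Delta\varphi}\,e^{-2\varphi+2\psi}\,dA.
\]
Since $U-H$ is holomorphic and, after a regularization ensuring it lies in $L^2(e^{-2\varphi})$, orthogonal to $v_0$ there, one may replace $H$ by $U$ in the pairing. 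Cauchy--Schwarz with the balanced splitting $e^{-2\varphi}=e^{-\varphi+\psi}\cdot e^{-\varphi-\psi}$ then yields
\[
\|v_0\|_{L^2(e^{-2\varphi+2\psi})}^{2} \;\leq\; \|v_0\|_{L^2(e^{-2\varphi+2\psi})}\;\|U\|_{L^2(e^{-2\varphi-2\psi})}.
\]

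Combining the two estimates, applying the inequality $(a+b)^2 \leq (1+\varepsilon)a^2+(1+\varepsilon^{-1})b^2$ to $|g+2v_0\overline{\partial}\psi|^2$, and then invoking the smallness hypothesis $|\overline{\partial}\psi|^2 \leq C_1\,\Delta\varphi$, produces a self-improving inequality of the form
\[
\|v_0\|_{L^2(e^{-2\varphi+2\psi})}^{2} \;\leq\; A(\varepsilon)\,\bigg\|\frac{g}{\sqrt{\Delta\varphi}}\bigg\|^{2}_{L^2(e^{-2\varphi+2\psi})} + B(\varepsilon,C_1)\,\|v_0\|_{L^2(e^{-2\varphi+2\psi})}^{2}.
\]
When $C_1<1$, one chooses $\varepsilon$ so that the coefficient $B(\varepsilon,C_1)<1$, absorbs the last term into the left-hand side, and optimizes $\varepsilon$ to extract the advertised constant $C=6/(1-C_1)$.

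The principal obstacle will be the rigorous justification of the orthogonality-subtraction step, which requires $U-H$ to actually lie in $L^2(e^{-2\varphi})$ so that the $L^2(e^{-2\varphi})$-orthogonality of $v_0$ against holomorphic functions is applicable. This is typically achieved by first replacing $\varphi,\psi$ with compactly-supported regularizations for which every relevant quantity belongs to every weighted $L^2$ space, and then passing to the limit. The second delicate point is the constant bookkeeping: the naive absorption produces a coefficient of the form $4C_1(1+\varepsilon^{-1})$, requiring a sharper grouping (or an intermediate orthogonal decomposition) to recover the crisp constant $6/(1-C_1)$ advertised in the statement instead of one that works only for small $C_1$.
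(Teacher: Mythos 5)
This is Theorem B, a result the paper quotes from Berndtsson (\cite[Lemma 2.2]{Bern}) without proof, so there is no internal argument to compare yours against; I can only assess the plan on its own terms. The skeleton you describe is indeed the standard route to such twisted estimates: minimality of $v_0$ gives orthogonality to holomorphic functions in $L^2(e^{-2\varphi})$, one twists by $e^{2\psi}$, solves $\overline{\partial}$ for the twisted data using the hypothesis in the $e^{-2\varphi-2\psi}$-weight, and closes with Cauchy--Schwarz via the splitting $e^{-2\varphi}=e^{-\varphi+\psi}\cdot e^{-\varphi-\psi}$.

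The gap is in the absorption step, and it is fatal to the statement as given, not merely to the constant. Writing $V^2=\int_{\D}|v_0|^2e^{-2\varphi+2\psi}\,dA$ and $G^2=\int_{\D}\frac{|g|^2}{\Delta\varphi}e^{-2\varphi+2\psi}\,dA$, your computation (with $\overline{\partial}H=e^{2\psi}(g+2v_0\overline{\partial}\psi)$) leads, after Cauchy--Schwarz and cancellation of one factor of $V$, to
\[
V^2\;\le\;\int_{\D}\frac{|g+2v_0\overline{\partial}\psi|^2}{\Delta\varphi}\,e^{-2\varphi+2\psi}\,dA .
\]
Even with the sharpest grouping --- the triangle inequality in the weighted $L^2$ norm rather than $(a+b)^2\le(1+\varepsilon)a^2+(1+\varepsilon^{-1})b^2$ --- the cross term is bounded by $2\sqrt{C_1}\,V$ using $|\overline{\partial}\psi|^2\le C_1\Delta\varphi$, so you arrive at $V\le G+2\sqrt{C_1}\,V$. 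This can be absorbed only when $C_1<1/4$, and even then it yields the constant $(1-2\sqrt{C_1})^{-2}$ rather than $6/(1-C_1)$. You acknowledge this yourself, but the ``sharper grouping or intermediate orthogonal decomposition'' that would cover the full range $0<C_1<1$ and produce the advertised constant is precisely the substance of Berndtsson's lemma, and it is not supplied; as written, the proposal establishes only a strictly weaker statement. The second unresolved point, which you also flag, is that $U-H$ must actually lie in $L^2(e^{-2\varphi})$ before the orthogonality of $v_0$ can be invoked; since $H=v_0e^{2\psi}$ and $\psi$ is a priori unbounded, the regularization you allude to is genuinely needed and would have to be carried out rather than announced.
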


\section{Estimates for reproducing kernels}\label{sec2:1}

In this section we will give some  pointwise estimates  for the reproducing kernel, especially far from the diagonal, as well as an integral type estimate involving the  reproducing kernel. For weights in the class $\mathcal{L}^*$, and points close to the diagonal, one has the following well-known estimate (see \cite[Lemma 3.6]{LR2} for example)
\begin{equation}\label{RK-Diag}
|K_ z(\zeta)| \asymp \|K_ z\|_{A^2(\omega)}\cdot \|K_{\zeta}\|_{A^2(\omega)} ,\qquad \zeta \in D(\delta \tau(z))
\end{equation}
for all $\delta \in (0,m_{\tau})$ sufficiently small. Thus, the interest of the next result relies when we are far from the diagonal.

\begin{theorem}\label{RK-PE}
Let $K_z $ be the reproducing kernel of  $A^2(\omega)$ where $\omega$ is a weight in  the class $\mathcal{E}$. For each $M\ge 1$, there exists a constant $ C > 0$ (depending on $M$) such that for each  $z,\xi \in \D$ one has
\begin{equation*}
 |K_z(\xi)|\leq C \frac{1}{\tau(z)}\frac{1}{\tau(\xi)}\,\omega(z)^{-1/2}\omega(\xi)^{-1/2} \left (\frac{\min(\tau(z),\tau(\xi))}{|z-\xi|}\right )^M.
\end{equation*}
\end{theorem}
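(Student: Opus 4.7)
The plan is to follow the H\"ormander--Berndtsson $\overline{\partial}$-strategy, which has become standard for off-diagonal reproducing kernel estimates in weighted settings. Since $K_z(\xi)=\overline{K_\xi(z)}$, I may assume without loss of generality that $\tau(z)\leq \tau(\xi)$, so that $\min(\tau(z),\tau(\xi))=\tau(z)$. When $|z-\xi|\leq b_M\tau(z)$ for the constant $b_M$ coming from condition (E), the factor $(\tau(z)/|z-\xi|)^M$ is comparable to a constant and the result is immediate from the near-diagonal estimate \eqref{RK-Diag} combined with Lemma \ref{RK-norm}. All the work is therefore concentrated in the far-diagonal regime $|z-\xi|>b_M\tau(z)$.

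In the off-diagonal regime, I would fix a smooth radial cutoff $\chi$ with $\chi\equiv 1$ on $D(z,\tau(z)/4)$, $\chi\equiv 0$ outside $D(z,\tau(z)/2)$, and $|\overline{\partial}\chi|\lesssim 1/\tau(z)$, and consider the function $\chi K_z$. Since $\chi K_z$ is not holomorphic, I would apply Berndtsson's Theorem \ref{Bern} to solve $\overline{\partial} v = K_z\,\overline{\partial}\chi$ with the auxiliary weight
\[
\psi(w)\;=\;M\,\log\!\Bigl(1+\frac{|w-z|^2}{(k_M\tau(z))^2}\Bigr),
\]
where the scaling constant $k_M$ is chosen so that $|\partial\psi/\partial w|^2\leq C_1\Delta\varphi$ with $C_1<1$. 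On the annular support of $\overline{\partial}\chi$ the weight $e^{2\psi}$ is of order $1$, $\Delta\varphi\asymp\tau(z)^{-2}$, and the on-diagonal estimate yields $|K_z(w)|^2\omega(w)\lesssim \omega(z)^{-1}\tau(z)^{-4}$; multiplying by the area $\asymp \tau(z)^2$ of the annulus gives
\[
\int_{\D}|v_0|^2\,\omega\, e^{2\psi}\,dA\;\lesssim\;\frac{1}{\omega(z)\,\tau(z)^2}.
\]
The resulting function $h:=\chi K_z-v_0$ is then holomorphic on $\D$ (it is the Bergman projection of $\chi K_z$). The claim is that $h(\xi)$ essentially equals $K_z(\xi)$: writing $K_z(\xi)=h(\xi)+\int(1-\chi)K_z\,\overline{K_\xi}\,\omega\,dA$, the tail term is controlled either by iterating this argument with a symmetric cutoff at $\xi$, or bootstrapping from an a priori Cauchy--Schwarz bound. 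Since $\chi\equiv 0$ on $D(\xi,\delta\tau(\xi))$ in the far regime, $h=-v_0$ there, and applying the sub-mean value Lemma \ref{subHarmP} to $h$ at $\xi$, together with the lower bound $e^{2\psi(w)}\gtrsim (|z-\xi|/\tau(z))^{2M}$ on that disc, gives exactly the asserted decay.

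The principal obstacle is the interplay between Berndtsson's requirement $|\partial\psi|^2\leq C_1\Delta\varphi$ with $C_1<1$ and the need for arbitrarily large $M$: naively $|\partial\psi|^2\sim M^2/\tau(z)^2\asymp M^2\Delta\varphi$, so the Berndtsson constant is of order $M^2$ rather than $<1$. This is where condition (E) is essential: it allows one to replace $\tau$ by $b_M\tau$ at the scale-setting step, absorbing the factor $M$ into $k_M$, while preserving condition (C) (and hence Lemma \ref{RK-norm}, the sub-mean value estimate, and $\Delta\varphi\asymp\tau^{-2}$) uniformly at the rescaled level. A secondary technical point is justifying $h(\xi)\approx K_z(\xi)$; this requires either a bootstrap using the weaker Cauchy--Schwarz kernel bound to close the residual $(1-\chi)$-integral, or a symmetrized construction that cuts off simultaneously near $z$ and near $\xi$.
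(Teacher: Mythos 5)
Your toolbox is the right one --- the auxiliary logarithmic weight $\psi$ centred at one of the two points, the H\"ormander/Berndtsson estimate for the minimal solution, the sub-mean value lemma to recover a pointwise bound, and the correct diagnosis that condition (E) is what lets the Berndtsson hypothesis $|\partial\psi|^2\le C_1\Delta\varphi$, $C_1<1$, survive for arbitrarily large $M$ (this is exactly the content of Lemma \ref{Lfi}). The $L^2$ bound you derive for $v_0$ on the annulus and the final evaluation at $\xi$ via Lemma \ref{subHarmP} are also essentially the computations in the paper. But there is a genuine gap in the reduction, namely the step ``the claim is that $h(\xi)$ essentially equals $K_z(\xi)$.'' Since $h=P_{\omega}(\chi K_z)$, the error is exactly
\[
K_z(\xi)-h(\xi)=\big\langle (1-\chi)K_z,\,K_{\xi}\big\rangle_{\omega}
=\int_{\D}(1-\chi(w))\,K_z(w)\,\overline{K_{\xi}(w)}\,\omega(w)\,dA(w),
\]
and this is \emph{not} a small remainder: the region $D(\delta\tau(\xi))$, where $1-\chi\equiv 1$, already contributes $\asymp |K_z(\xi)|\int_{D(\delta\tau(\xi))}|K_{\xi}|\,\omega\,dA\asymp |K_z(\xi)|$ by \eqref{RK-Diag} and Lemma \ref{RK-norm}, with a constant that you cannot show is $<1$. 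So the bootstrap inequality you would obtain has the form $|K_z(\xi)|\le |h(\xi)|+c\,|K_z(\xi)|+\cdots$ with $c\asymp 1$, and it does not close; the symmetrised two-cutoff variant runs into the same obstruction (the cross term supported near $z$ reproduces $K_{\xi}(z)=\overline{K_z(\xi)}$ again, and the term away from both points only yields the decay-free Cauchy--Schwarz bound $\|K_z\|\,\|K_{\xi}\|$).

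The way to avoid any remainder is to never project a truncation of the kernel. Put the cutoff $\chi$ at $\xi$ (the evaluation point), use Lemma \ref{subHarmP} to get $|K_z(\xi)|^2\omega(\xi)\lesssim \tau(\xi)^{-2}\|K_z\|^2_{L^2(\D,\chi\omega\,dA)}$, and then compute that local norm by duality: $\|K_z\|_{L^2(\chi\omega)}=\sup_f|\langle f,K_z\rangle_{L^2(\chi\omega)}|$ over holomorphic $f$ on $D(\delta\tau(\xi))$ with $\int_{D(\delta\tau(\xi))}|f|^2\omega\,dA=1$. The point is the exact identity $\langle f,K_z\rangle_{L^2(\chi\omega)}=P_{\omega}(f\chi)(z)=(f\chi)(z)-u(z)=-u(z)$, where $u$ is the minimal $L^2(\omega)$ solution of $\overline{\partial}u=f\overline{\partial}\chi$ and the first term vanishes because $z\notin\operatorname{supp}\chi$ in the off-diagonal regime. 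There is no tail to control; your Berndtsson estimate with the weight $e^{2\varphi_{\xi}}$ (large at $z$) then bounds $|u(z)|$ and yields the factor $(\tau(\xi)/|z-\xi|)^M$, with the $\min$ obtained by interchanging $z$ and $\xi$ at the end. This duality step is the missing idea; the rest of your argument transfers to it almost verbatim.
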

 The proof  is going to be similar to the pointwise estimate obtained by  Marzo and Ortega-Cerd\`{a}  in \cite{M-OC} for weighted Fock spaces. However, if one wants to follow the proof given in \cite{M-OC} one needs the function $\tau$ to satisfy the following condition: for all   $z,\xi\in\mathbb{D} $  with $z\notin D(\delta\tau(\xi))$ there is $\eta > 0$ sufficiently small and a constant $c_3 > 0$ such that
\begin{equation*}
\frac{\tau(\xi)}{\tau(z)} \leq c_3 \bigg( \frac{|z-\xi|}{\tau(z)}\bigg)^{1-\eta},
\end{equation*}
and it is easy to see that this condition is never satisfied in the setting of the unit disk. Instead of that, we will use a mixture of the arguments in \cite{M-OC} with the ones in \cite{OP}. Before going to the proof of Theorem \ref{RK-PE} we need
an auxiliary result.

\begin{lemma}\label{Lfi}
Let $a\in \D$. For $\varepsilon>0$ and $M\ge 1$ there is a constant $\beta$ (depending on $M$ and $\varepsilon$) such that the function
\[\varphi_ a(z)=\frac{M}{4} \log \left (1+\frac{|z-a|^2}{\beta ^2 \tau(a)^2} \right )\]
satisfies
\begin{displaymath}
\big |\partial \varphi_ a (z)\big |^2 \le \varepsilon \,\Delta \varphi(z),\quad \textrm{ and }\quad \Delta \varphi_ a(z)\le \varepsilon \, \Delta \varphi (z).
\end{displaymath}
\end{lemma}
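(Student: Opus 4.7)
Write $\varphi_a = (M/4)\log u$ with $u(z) = 1 + |z-a|^2/(\beta^2\tau(a)^2)$. A direct computation (using $\Delta\log u = 4\bigl(\partial\bar\partial u/u - |\partial u|^2/u^2\bigr)$) yields
\[
|\partial\varphi_a(z)|^2 = \frac{M^2}{16}\cdot\frac{|z-a|^2}{(\beta^2\tau(a)^2+|z-a|^2)^2},\qquad \Delta\varphi_a(z) = \frac{M\beta^2\tau(a)^2}{(\beta^2\tau(a)^2+|z-a|^2)^2}.
\]
In each expression the numerator is dominated by $\beta^2\tau(a)^2+|z-a|^2$, so both quantities are bounded by $C_M/(\beta^2\tau(a)^2 + |z-a|^2)$ with $C_M=\max(M,M^2/16)$. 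Since $\omega\in\mathcal{L}^*$ gives $\Delta\varphi(z)\asymp\tau(z)^{-2}$, the two desired inequalities both reduce to the single uniform bound
\[
\frac{\tau(z)^2}{\beta^2\tau(a)^2 + |z-a|^2} \le \varepsilon' \qquad (z\in\D),
\]
for $\varepsilon'$ sufficiently small in terms of $\varepsilon$, $M$, and the constants from $\Delta\varphi\asymp\tau^{-2}$.

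To establish this bound I would choose $m$ first and then $\beta$. Fixing $m\ge 1$, condition (E) furnishes $b_m>0$ and $t_m<1/m$; I split according to whether $|z-a|$ is larger or smaller than $b_m\tau(a)$. In the far regime $|z-a|>b_m\tau(a)$, (E) gives $\tau(z)\le\tau(a)+t_m|z-a|\le(1/b_m+t_m)|z-a|$, so, dropping $\beta^2\tau(a)^2$ from the denominator, the left side above is at most $(1/b_m+t_m)^2$; this can be made arbitrarily small by taking $m$ large (replacing $b_m$ by $\max(b_m,m)$ if needed, which preserves (E)). In the near regime $|z-a|\le b_m\tau(a)$, the Lipschitz property (B) gives $\tau(z)\le(1+c_2 b_m)\tau(a)$, and dropping instead the $|z-a|^2$ term yields a bound of $(1+c_2 b_m)^2/\beta^2$; with $m$ (hence $b_m$) already fixed, this is less than $\varepsilon'$ provided $\beta$ is taken large enough.

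The main obstacle---and the reason for restricting to the subclass $\mathcal{E}\subset\mathcal{L}^*$---is precisely the far regime: the Lipschitz bound $\tau(z)\le\tau(a)+c_2|z-a|$ alone only gives $\tau(z)^2/|z-a|^2 \lesssim c_2^2$, which does not shrink as $\beta\to\infty$. Condition (E) is tailored to replace $c_2$ by a constant $t_m<1/m$ at the cost of requiring $|z-a|$ to exceed the threshold $b_m\tau(a)$, and this is exactly what makes the two-step ``$m$ first, then $\beta$'' argument work.
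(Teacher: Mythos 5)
Your proof is correct and follows essentially the same route as the paper's: the same explicit computation of $|\partial\varphi_a|^2$ and $\Delta\varphi_a$, the same dichotomy $|z-a|\lessgtr b_m\tau(a)$, with the Lipschitz condition (B) handling the near regime (where enlarging $\beta$ does the work) and condition (E) handling the far regime (where enlarging $m$ does the work). The only difference is cosmetic: you unify both target inequalities into the single bound $C_M/(\beta^2\tau(a)^2+|z-a|^2)$ before splitting into cases, whereas the paper estimates the two quantities separately in each regime.
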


\begin{proof}
Since $\Delta \varphi(z)\asymp \tau(z)^{-2}$ and $\varepsilon>0$ is arbitrary, it suffices to show that
\begin{displaymath}
\big |\partial \varphi_ a (z)\big |^2 \le \frac{\varepsilon}{\tau(z)^2},\quad \textrm{ and }\quad \Delta \varphi_ a(z)\le  \frac{\varepsilon}{\tau(z)^2}.
\end{displaymath}
An easy computation gives
\begin{displaymath}
\partial \varphi_ a(z)= \frac{M}{4}\, \frac{\overline{z-a}}{\beta^2 \,\tau(a)^2+|z-a|^2}.
\end{displaymath}
Therefore,
\[\Big | \partial \varphi_ a(z) \Big |^2 =\left (\frac{M}{4} \right )^2 \frac{|z-a|^2}{(\beta^2 \,\tau(a)^2+|z-a|^2)^2}.\]
Another computation yields
\begin{displaymath}
\Delta \varphi_ a(z)=4 \overline{\partial} \partial \varphi_ a(z)= M\, \frac{\beta^2 \,\tau(a)^2}{(\beta^2 \,\tau(a)^2+|z-a|^2)^2}.
\end{displaymath}
Fix $m>2M \varepsilon^{-1/2}$ and let $b>\max(m,b_ m)$, where $b_ m$ is the number appearing in condition (E).\\
\\
$\bullet$ If $|z-a|\le b \,\tau(a)$, then due to condition (B),
\begin{displaymath}
\begin{split}
\frac{1}{\tau(z)^2} \ge \frac{1}{(\tau(a)+c_ 2 |z-a|)^2}\ge \frac{1}{\tau(a)^2\,(1+bc_ 2)^2 },
\end{split}
\end{displaymath}
where $c_ 2$ is the constant appearing in condition (B). Since clearly
\[\Big | \partial \varphi_ a(z) \Big |^2 \le \left (\frac{M}{4\beta}\right )^2 \frac{1}{\tau(a)^2},\]
we only need to choose $\beta$ big enough satisfying
\begin{equation}\label{EqC-1}
\left (\frac{M}{4\beta}\right )^2 \le \varepsilon\,\frac{1}{(1+c_ 2 b)^2}
\end{equation}
to get
\[\Big | \partial \varphi_ a(z) \Big |^2 \le \varepsilon\,\tau(z)^{-2},\qquad |z-a|\le b \tau(a).\]
Also,
\[\Delta \varphi_ a(z)\le \frac{M}{\beta ^2} \frac{1}{\tau(a)^2}.\]
Hence choosing $\beta$ big enough so that
\begin{equation}\label{EqC-2}
\frac{M}{\beta ^2}\le \varepsilon\,\frac{1}{(1+c_ 2 b)^2}
\end{equation}
we have
\[\Delta \varphi_ a(z)\le \varepsilon\,\tau(z)^{-2},\qquad |z-a|\le b \tau(a).\]
Thus, to get both \eqref{EqC-1} and \eqref{EqC-2} we only need to choose $\beta>0$ satisfying
\[\beta^2>\varepsilon^{-1}\,(1+c_ 2 b)^2\,\max \Bigg (M,\Big (\frac{M}{4} \Big )^2\Bigg ).\]
$\bullet$ If $|z-a|>b \,\tau(a)$, by condition (E) we have
\[ \frac{1}{\tau(z)^2} \ge \frac{1}{(\tau(a)+t_ m |z-a|)^2}\ge \frac{1}{|z-a|^2} \cdot \frac{1}{(b^{-1}+t_ m)^2}.\]
Here $t_ m$ is the number appearing in condition (E). On the other hand,
\[|\partial \varphi_ a (z)|^2 \le \left (\frac{M}{4}\right )^2\,\frac{1}{|z-a|^2}\]
and
\[\Delta \varphi_ a (z) \le \frac{M}{|z-a|^2}.\]
Thus, we only need
\begin{equation}\label{C-M}
\max \left (M,\Big (\frac{M}{4}\Big )^2 \right ) \le \varepsilon \cdot \frac{1}{(b^{-1}+t_ m)^2}
\end{equation}
to get the result. But, since $0<t_ m<1/m$, $b>m$ and $m>2 M \varepsilon^{-1/2}$,
\begin{displaymath}
\frac{1}{(b^{-1}+t_ m)^2} >\frac{1}{(b^{-1}+1/m)^2}=\frac{m^2}{(b^{-1}m +1)^2}>\Big (\frac{m}{2} \Big )^2>M^2\,\varepsilon^{-1}
\end{displaymath}
that gives \eqref{C-M} finishing the proof.
\end{proof}

\begin{proof}[\textbf{Proof of Theorem \ref{RK-PE}}]
Let $ z,\xi \in\D$ and fix $0<\delta<m_{\tau}$. If $D(\delta\tau(z))\cap D(\delta\tau(\xi)) \neq \emptyset ,$  then
\[\frac{\min(\tau(z),\tau(\xi))}{|z-\xi|}\gtrsim 1\]
due to \eqref{asymptau}, and  therefore the result  follows from the inequality $|K_z(\xi)| \leq \|K_z\|_{A^2(\omega)}\|K_\xi\|_{A^2(\omega)}$ together with Lemma  \ref{RK-norm}.

Next suppose  that $D(\delta\tau(z))\cap D(\delta\tau(\xi)) = \emptyset. $
Let $0 \leq \chi \leq 1 $ be a function in $C_c ^\infty (\mathbb{D})$ supported on the disk $  D(\delta\tau(\xi))$\,  such that\,  $\chi \equiv 1$  in   $D(\frac{\delta}{2}\, \tau(\xi))$
and\,\, $|\overline\partial\chi|^2\lesssim \displaystyle\frac{\chi}{\tau(\xi)^2}.$  By Lemma \ref{subHarmP} we obtain
\begin{equation}\label{KRN}
\begin{split}
|K_z(\xi)|^2 \omega(\xi)&\lesssim \frac{1}{\tau(\xi)^2}\int_{D(\frac{\delta}{2}\tau(\xi))}{|K_z(s)|^2 \omega(s) \, dA(s)}
\\
&\lesssim\frac{1}{\tau(\xi)^2}\,\big \|K_z \big \|^2_{L^2(\D,\,\chi \omega dA)}.
\end{split}
\end{equation}
 By duality,
$\|K_z\|_{_{L^2(\D,\chi\omega )}} = \sup_{\substack{f}}|\langle f,K_z\rangle_{L^2(\mathbb{D},\chi\omega)}|,$  where the supremum runs over all holomorphic functions $f$ on $D(\delta\tau(\xi))$ such that
\[ \int_{D(\delta\tau(\xi))}{|f(z)|^2\omega(z)dA(z)}= 1.\]
As  $ f\chi\in L^2(\mathbb{D},\omega\,  dA)$ one has $$ \langle f,K_z\rangle_{L^2(\mathbb{D},\chi\omega)} = P_{\omega}(f\chi)(z),$$
where $P_{\omega}$ is the Bergman projection given by  \eqref{BP}
 which is obviously bounded from $L^2(\mathbb{D},\omega\, dA)$   to  $A^2(\omega).$ Now we consider $ u = f\chi - P_{\omega}(f\chi)$ the solution with minimal norm in  $L^2(\mathbb{D},\omega\, dA)$ of the equation
\begin{equation}\label{dbar-eq}
\overline\partial{u} = \overline\partial ({f\chi}) = f\overline\partial\chi.
\end{equation}
Since $\chi(z) = 0,$  we get
\[ |\langle f,K_z\rangle_{L^2(\mathbb{D},\chi\omega)}|=| P_{\omega}(f\chi)(z)| = |u(z)|.\]
Consider the subharmonic function
\[ \varphi_ {\xi}(s) = \frac{M}{4} \log \left (1+\frac{|s-\xi|^2}{\beta ^2 \tau(\xi)^2} \right ),\]
with $\beta>0$ (depending on $M$) being the one given in Lemma \ref{Lfi}, that gives
\begin{equation}\label{DELTAA2}
\big |\partial \varphi_ {\xi} (s)\big |^2 \le \frac{1}{2} \,\Delta \varphi(s),\quad \textrm{ and }\quad \Delta \varphi_ {\xi}(s)\le \frac{1}{2} \Delta \varphi (s).
\end{equation}
We will write $\tau_{\varphi}(z)$  if we need to stress the dependence on $\varphi.$
We consider the function  $ \rho = \varphi - \varphi_{\xi}.$ From   \eqref{DELTAA2},  we get  $\Delta\rho \geq \frac{1}{2}\,\Delta\varphi .$  Since it is clear that  $\Delta\varphi \geq \Delta\rho ,$ this implies
\begin{equation}\label{Eq-tau}
\tau_{\rho}(\zeta) \asymp \tau_{\varphi}(\zeta), \quad \zeta \in \mathbb{D}.
\end{equation}
 Notice that  the function $u$ is holomorphic in $ D(\delta\tau_{\rho}(z))$  for some $\delta > 0.$ Then, using  the notation $\omega_\rho = e^{-2\rho},$ by Lemma  \ref{subHarmP} and the remark following it,
\begin{equation}\label{unique}
\begin{split}
|u(z)|^2 \omega_{\rho}(z)&\lesssim  \frac{1}{\tau_{\rho}(z)^2}\int_{D(\frac{\delta}{2}\tau_{\rho}(z))}|u(s)|^2 \,\omega_\rho(s) \,dA(s)
\\
&\lesssim  \frac{1}{\tau_{\rho}(z)^2}\int_{\D} |u(s)|^2 \,\omega_\rho(s) \,dA(s).
\end{split}
\end{equation}
We know that $\Delta(\varphi+\varphi_{\xi}) > 0,$ then  applying H\"{o}rmander's Theorem \ref{hormander}, one has $v$ such that $\overline{\partial}v = \overline{\partial}(f\chi)$ with
\begin{displaymath}
\int_{\D}|v|^2 e^{-2\varphi-2\varphi_{\xi}} dA
\leq \int_{\D}\frac{|\overline{\partial}v|^2}{\Delta (\varphi + \varphi_{\xi})}e^{-2\varphi -2\varphi_{\xi}}  dA
\leq \int_{\D} \frac{|\overline{\partial}v|^2}{\Delta \varphi}e^{-2\varphi-2\varphi_{\xi}}  dA.
\end{displaymath}
This together with \eqref{DELTAA2} says that we are under the assumptions of Berndtsson theorem. Therefore, since $u$ is the solution with minimal norm in $L^2(\mathbb{D},e^{-2\varphi} dA)$ of the equation \eqref{dbar-eq},
using  Theorem \ref{Bern} we obtain
\begin{displaymath}
\int_{\D} |u(s)|^2 e^{-2\varphi(s)+ 2\varphi_{\xi}(s)}dA(s)\lesssim \int_{\D} \frac{|\overline{\partial}u(s)|^2}{\Delta \varphi(s)}e^{-2\varphi(s)+2\varphi_{\xi}(s)} dA(s) .
\end{displaymath}
Putting these into \eqref{unique}, using \eqref{dbar-eq}, \eqref{Eq-tau} and the fact that
\[|\overline\partial\chi(s)|^2\lesssim \displaystyle\frac{\chi (s)}{\tau_{\varphi}(s)^2},\]
 we get
\begin{displaymath}
\begin{split}
|u(z)|^2 \omega_{\rho}(z)&\lesssim  \frac{1}{\tau_\rho(z)^2}\int_{\D} \tau_{\varphi} (s)^2\, |f(s)|^2\,\big |\overline{\partial}\chi(s)\big |^2 \,\omega_\rho(s) \, dA(s)
\\
& \lesssim  \frac{1}{\tau_{\varphi}(z)^2}\int_{\D} |f(s)|^2\,\chi(s) \,\omega_\rho(s) \,  dA(s).
\end{split}
\end{displaymath}
Clearly, the function  $\varphi_{\xi}$ is bounded in $D(\delta\tau (\xi))$, and therefore
\[\omega_{\rho} (s)=e^{-2\rho(s)}=e^{-2\varphi(s)+2\varphi_{\xi}(s)}\lesssim e^{-2\varphi(s)}=\omega(s),\qquad s\in D(\delta\tau(\xi)).\]
Thus, we obtain
\begin{displaymath}
\begin{split}
|u(z)|^2 \omega_{\rho}(z)&\lesssim \frac{1}{\tau_{\varphi}(z)^2}\int_{D(\delta\tau(\xi))} |f(s)|^2 \,\omega_\rho(s) \,  dA(s)
\\
&\lesssim  \frac{1}{\tau_{\varphi} (z)^2}\int_{D(\delta\tau(\xi))}{|f(s)|^2 \, \omega (s)\, dA(s)}
\\
&=\frac{1}{\tau_{\varphi} (z)^2}.
\end{split}
\end{displaymath}
This gives,
\[\|K_z\|^2_{_{L^2(\mathbb{D},\chi \omega_\varphi)}}  \lesssim\frac{\omega_{{\rho}}(z)^{-1}}{\tau_{\varphi}(z) ^{2}}. \]
Then, taking into account \eqref{KRN}, we get
\begin{displaymath}
\begin{split}
 |K_z(\xi)|^2 \omega (\xi)
&\lesssim  \frac{1}{\tau_{\varphi}(\xi)^2}\frac{\omega_{\rho}(z)^{-1}}{\tau_{\varphi}(z)^2}
\\
&=\frac{\omega(z)^{-1}}{\left(1+\left(\frac{|\xi-z|}{\beta \,\tau(\xi)}\right)^2\right)^{\frac{M}{2}}}
\frac{1}{\tau_{\varphi}(\xi)^2}\frac{1}{\tau_{\varphi}(z)^2}.
\end{split}
\end{displaymath}
Thus
 \[|K_z(\xi)|\lesssim \beta ^M\,\frac{1}{\tau(\xi)}\,\frac{1}{\tau(z)} \, \omega(\xi)^{-\frac{1}{2}} \,\omega(z)^{-\frac{1}{2}}\,\left(\frac{\tau(\xi)}{|z-\xi|}\right)^{M}.\]
Finally, interchanging the roles of $z$ and $\xi$ we also get
 \[ |K_z(\xi)|\lesssim \beta ^M\,\frac{1}{\tau(\xi)}\,\frac{1}{\tau(z)} \, \omega(\xi)^{-\frac{1}{2}} \,\omega(z)^{-\frac{1}{2}}\,\left(\frac{\tau(z)}{|z-\xi|}\right)^{M},\]
completing the proof of the theorem.
\end{proof}

\begin{lemma}\label{kernelestimat}
Let $\omega\in\mathcal{E}$ and let $K_z $ be the reproducing kernel for $A^2(\omega).$ Then
there exists a constant $ C > 0 $  such that
\[\int_{\D}|K_z(\xi)| \,\omega(\xi)^{\frac{1}{2}}  \,dA(\xi) \leq C \,\omega(z)^{-\frac{1}{2}}. \]
\end{lemma}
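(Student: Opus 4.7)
The plan is to combine the near-diagonal estimate \eqref{RK-Diag} with the pointwise kernel bound of Theorem \ref{RK-PE}, after splitting the integral into a region close to $z$ and its complement. Fix $\delta\in(0,m_\tau)$ and write
\[
\int_{\D}|K_z(\xi)|\,\omega(\xi)^{1/2}\,dA(\xi) = I_1(z)+I_2(z),
\]
where $I_1(z)$ is the integral over $D(\delta\tau(z))$ and $I_2(z)$ is the integral over $\D\setminus D(\delta\tau(z))$.

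For the on-diagonal piece $I_1$, I would use the trivial bound $|K_z(\xi)|\le \|K_z\|_{A^2(\omega)}\|K_\xi\|_{A^2(\omega)}$ together with Lemma \ref{RK-norm}, which gives $\|K_w\|_{A^2(\omega)}\asymp\tau(w)^{-1}\omega(w)^{-1/2}$. Since $\xi\in D(\delta\tau(z))$ implies $\tau(\xi)\asymp\tau(z)$ by \eqref{asymptau}, one gets
\[
|K_z(\xi)|\,\omega(\xi)^{1/2} \lesssim \frac{1}{\tau(z)^2\,\omega(z)^{1/2}}.
\]
Since the area of $D(\delta\tau(z))$ is $\lesssim \tau(z)^2$, this yields $I_1(z)\lesssim \omega(z)^{-1/2}$.

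For the off-diagonal piece $I_2$, I would apply Theorem \ref{RK-PE} with an exponent $M>2$ (taking, e.g., $M=3$). The key algebraic observation is
\[
\frac{\min(\tau(z),\tau(\xi))^M}{\tau(z)\tau(\xi)} = \min(\tau(z),\tau(\xi))^{M-2}\cdot\frac{\min(\tau(z),\tau(\xi))^2}{\tau(z)\tau(\xi)} \le \tau(z)^{M-2},
\]
because $\min(a,b)^2\le ab$ and $\min(a,b)\le\tau(z)$. Therefore Theorem \ref{RK-PE} yields
\[
|K_z(\xi)|\,\omega(\xi)^{1/2} \lesssim \frac{\tau(z)^{M-2}}{\omega(z)^{1/2}\,|z-\xi|^M}
\]
on $\D\setminus D(\delta\tau(z))$. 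Passing to polar coordinates centered at $z$ gives
\[
\int_{|z-\xi|>\delta\tau(z)}\frac{dA(\xi)}{|z-\xi|^M}\lesssim (\delta\tau(z))^{2-M},
\]
so $I_2(z)\lesssim \delta^{2-M}\omega(z)^{-1/2}$. Adding the two bounds completes the proof.

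There is no serious obstacle here once Theorem \ref{RK-PE} is available; the only thing to watch is that the exponent $M$ must be chosen strictly larger than $2$ so that the tail integral in polar coordinates converges, and the cancellation of $\tau(z)^{M-2}$ with $(\delta\tau(z))^{2-M}$ produces a constant independent of $z$. The elementary inequality $\min(\tau(z),\tau(\xi))^2\le \tau(z)\tau(\xi)$ is what allows the kernel bound to be integrated without having to control $\tau(\xi)$ from below (which would fail near the boundary).
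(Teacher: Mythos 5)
Your proof is correct and follows essentially the same route as the paper: the same near/far splitting relative to $D(\delta\tau(z))$, the same use of \eqref{RK-Diag}--type bounds with Lemma \ref{RK-norm} on the near piece, and Theorem \ref{RK-PE} with $M=3$ plus the inequality $\min(\tau(z),\tau(\xi))^M\le\tau(z)^{M-1}\tau(\xi)$ on the far piece. The only cosmetic difference is that the paper evaluates the tail integral $\int_{|z-\xi|>\delta\tau(z)}|z-\xi|^{-M}\,dA(\xi)$ by dyadic annuli rather than directly in polar coordinates, which yields the same bound $\asymp(\delta\tau(z))^{2-M}$.
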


\begin{proof}
For  $ 0 < \delta_0 \leq m_\tau$ fixed, let
\[ A(z): =  \int_{|z-\xi|\leq \delta_0\tau(z)} |K_z(\xi)| \,\omega(\xi)^{\frac{1}{2}}\, dA(\xi)\]
and
\[ B(z): =  \int\limits_{|z-\xi| > \delta_0\tau(z)}{|K_z(\xi)|\, \omega(\xi)^{\frac{1}{2}} \, dA(\xi)}.\]
By Lemma \ref{RK-norm} and \eqref{asymptau},
\begin{equation}\label{A}
\begin{split}
A(z)  &\leq  \int_{|z-\xi|\leq \delta_0\tau(z)}{\|K_z\|_{A^2(\omega)}\,\|K_\xi\|_{A^2(\omega)}\,\omega(\xi)^{\frac{1}{2}}\, dA(\xi)}
\\
&\asymp\tau(z)\,\|K_z\|_{A^2(\omega)}
\asymp \omega(z)^{-\frac{1}{2}}.
\end{split}
\end{equation}
On the other hand, by Theorem \ref{RK-PE} with $M=3$, we have
\begin{equation}\label{B}
\begin{split}
B(z)\,&\lesssim \,\,\frac{\omega(z)^{-\frac{1}{2}}}{\tau(z)}\int_{|z-\xi| > \delta_0\tau(z)}\frac{1}{\tau(\xi)}\,\left (\frac{\min(\tau(z),\tau(\xi))}{|z-\xi|}\right )^3 \, dA(\xi)
\\
& \le \,\,\omega(z)^{-\frac{1}{2}} \,\tau(z)\int_{|z-\xi| > \delta_0\tau(z)}\frac{dA(\xi)}{|z-\xi|^3}.
\end{split}
\end{equation}
To estimate the last integral, consider the covering of $\displaystyle\lbrace \xi\in\mathbb{D} : |z-\xi| > \delta_0 \tau(z)\rbrace$ given by
\[R_k(z) = \Big \lbrace\xi\in\D : 2^{k}\delta_0\tau(z) < |z-\xi| \leq 2^{k+1}\delta_0\tau(z) \Big \rbrace, \quad k=0,1,2,\dots\]
We have
\begin{displaymath}
\begin{split}
\int_{|z-\xi| > \delta_0\tau(z)}\frac{dA(\xi)}{|z-\xi|^3}& \,\le \,\sum_{k\ge 0} \int_{R_ k(z)}\frac{dA(\xi)}{|z-\xi|^3}
\\
&\asymp \tau(z)^{-3}\sum_{k\ge 0}2^{-3k} \,Area(R_k(z))
\\
&\asymp \tau(z)^{-1}\sum_{k\ge 0}2^{-k} \lesssim \tau(z)^{-1}.
\end{split}
\end{displaymath}
Putting this into \eqref{B} we get
\begin{equation*}
B(z) \,\lesssim \,\,\omega(z)^{-1/2},
\end{equation*}
which together with \eqref{A} gives the desired  result.
\end{proof}

\section{Bounded projections and the reproducing formula}\label{sec1:2}
Recall that the natural Bergman projection $P_{\omega}$ is given by
\[ P_\omega f(z) = \int_{\D}{f(\xi) \overline{K_z (\xi)} \, \omega(\xi) \, dA(\xi)}.\]
As was said in the introduction, the Bergman projection is not necessarily bounded on $L^p (\D,\omega \,dA)$ unless $p = 2$. However, we are going to see next that $P_{\omega}$ is bounded on $L^p(\omega^{p/2}):=L^p (\D,\omega ^{p/2}\, dA)$.

\begin{theorem}\label{projbd}
Let  $1 \leq p  < \infty$  and  $ \omega\in\mathcal{E}.$ The Bergman projection $ P_\omega : L^p(\omega^{p/2})\longrightarrow A^p(\omega^{p/2})$ is bounded.
\end{theorem}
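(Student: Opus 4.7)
The plan is to apply Schur's test to the positive integral operator
\[
Tg(z) \defeq \omega(z)^{1/2}\int_{\D} |g(\xi)|\,|K_ z(\xi)|\,\omega(\xi)^{1/2}\,dA(\xi),
\]
after rewriting the projection estimate in terms of the unweighted measure $dA$. Writing $g(\xi)=f(\xi)\,\omega(\xi)^{1/2}$, one has the isometric identification $\|f\|_{L^p(\omega^{p/2})}=\|g\|_{L^p(dA)}$, and the definition \eqref{BP} of $P_\omega$ immediately gives
\[
|P_\omega f(z)|\,\omega(z)^{1/2} \;\le\; \omega(z)^{1/2}\int_{\D} |g(\xi)|\,|K_ z(\xi)|\,\omega(\xi)^{1/2}\,dA(\xi).
\]
Thus the boundedness $P_\omega:L^p(\omega^{p/2})\to A^p(\omega^{p/2})$ is reduced to the boundedness on $L^p(dA)$ of the operator with the non-negative kernel $T(z,\xi)\defeq |K_ z(\xi)|\,\omega(z)^{1/2}\omega(\xi)^{1/2}$.

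The next step is to verify the Schur-test hypotheses with the trivial test function $h\equiv 1$. Lemma \ref{kernelestimat} yields directly
\[
\int_{\D} T(z,\xi)\,dA(\xi) \;=\; \omega(z)^{1/2}\int_{\D}|K_ z(\xi)|\,\omega(\xi)^{1/2}\,dA(\xi) \;\le\; C,
\]
uniformly in $z\in\D$. By the Hermitian symmetry $K_ z(\xi)=\overline{K_{\xi}(z)}$, i.e. $|K_ z(\xi)|=|K_\xi(z)|$, the same lemma applied to the variable $z$ gives
\[
\int_{\D} T(z,\xi)\,dA(z) \;=\; \omega(\xi)^{1/2}\int_{\D}|K_\xi(z)|\,\omega(z)^{1/2}\,dA(z) \;\le\; C,
\]
uniformly in $\xi\in\D$.

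Once both estimates are available, $L^p(dA)$-boundedness follows by the usual argument: for $1<p<\infty$ one splits $T(z,\xi)=T(z,\xi)^{1/p'}\cdot T(z,\xi)^{1/p}$ and applies H\"older's inequality with exponents $p'$ and $p$, then Fubini, to obtain
\[
\int_{\D}|Tg(z)|^p\,dA(z)\;\le\; C^{p/p'}\int_{\D}|g(\xi)|^p\Big(\int_{\D}T(z,\xi)\,dA(z)\Big)\,dA(\xi)\;\le\;C^p\,\|g\|_{L^p(dA)}^p.
\]
For $p=1$, a direct application of Tonelli using only the second kernel estimate suffices (and the case $p=2$ is already known since $P_\omega$ is the orthogonal projection on $L^2(\omega\,dA)$, which coincides with $L^2(\omega^{p/2})$).

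The genuine work has already been done in Section \ref{sec2:1}: the pointwise decay of $K_ z(\xi)$ in Theorem \ref{RK-PE} was used to prove the $L^1$-type bound of Lemma \ref{kernelestimat}, and that lemma is precisely the symmetric Schur input needed here. So the only thing left is the bookkeeping above; there is no new analytic obstacle, and the fact that the single test function $h\equiv 1$ works for every $p\in[1,\infty)$ simultaneously is what makes the argument clean.
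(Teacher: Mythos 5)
Your proposal is correct and is essentially the paper's own argument in Schur-test clothing: the paper likewise reduces everything to Lemma \ref{kernelestimat}, applying H\"older's inequality with respect to the measure $|K_z(\xi)|\,\omega(\xi)^{1/2}\,dA(\xi)$ and then Fubini together with the symmetry $|K_z(\xi)|=|K_\xi(z)|$, which is exactly your row and column bound with test function $h\equiv 1$. The only difference is presentational (you pass to the unweighted $L^p(dA)$ via $g=f\,\omega^{1/2}$ first), so there is nothing to add.
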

\begin{proof}
We first consider the easiest case $p= 1.$ By Fubini's Theorem and Lemma \ref{kernelestimat} we obtain
\begin{displaymath}
\begin{split}
\|P_\omega f\|_{A^1(\omega^{1/2})} &=\int_{\D} |P_\omega f(z)| \,\omega(z)^{1/2} \, dA(z)
\\
&\leq \int_{\D}\left (\int_{\D}{|f(\xi)||K_z(\xi})|\,\omega(\xi) \, dA(\xi)\right ) \omega(z)^{1/2}\, dA(z)
\\
&= \int_{\D} |f(\xi)| \,\omega(\xi) \left(\int_{\D}{|K_\xi(z)|\, \omega(z)^{1/2} dA(z)}\right) dA(\xi)
\\
&\lesssim \int_{\D} |f(\xi)| \,\omega(\xi)^{1/2} dA(\xi) =  \|f\|_{L^1(\omega^{1/2})}.
\end{split}
\end{displaymath}
Next, we consider the case $1 <  p < \infty$. Let $ p'$ denote the conjugate exponent of $p$. By H\"older's inequality and Lemma \ref{kernelestimat}
\begin{displaymath}
\begin{split}
|P_{\omega} f(z)|^p &\le \left (\int_{\D} |f(\xi)|^p \, |K_z(\xi)|\, \omega(\xi)^{\frac{p+1}{2}} \, dA(\xi) \right ) \left ( \int_{\mathbb{D}}{|K_z(\xi)| \,\omega(\xi)^{1/2} \ dA(\xi)}\right )^{p-1}
\\
& \lesssim \left (\int_{\D} |f(\xi)|^p \, |K_z(\xi)|\, \omega(\xi)^{\frac{p+1}{2}} \, dA(\xi) \right )\, \omega(z)^{-\frac{(p-1)}{2}}.
\end{split}
\end{displaymath}
This together with Fubini's theorem and another application of Lemma \ref{kernelestimat} gives
\begin{displaymath}
\begin{split}
\|P_\omega f\|_{A^p(\omega^{p/2})}^p &=\int_{\D}{|P_\omega f(z)|^p \omega(z)^{p/2} \, dA(z)}
\\
&\lesssim \int_{\D}\left(\int_{\D} |f(\xi)|^p \, |K_z(\xi)|\, \omega(\xi)^{\frac{p+1}{2}} \, dA(\xi)  \right)\,\omega(z)^{1/2}\, dA(z)
\\
&=\int_{\D}|f(\xi)|^p\, \omega(\xi)^{\frac{p+1}{2}} \left (\int_{\D}|K_{\xi}(z)|\,\omega(z)^{1/2}\, dA(z)\right ) \,dA(\xi)
\\
& \lesssim \|f\|^p_{L^p(\omega^{p/2})}.
\end{split}
\end{displaymath}
The proof is complete.
\end{proof}
To deal with the case $p=\infty$, given a weight $v$, we introduce the growth space $L^{\infty}(v)$ that consists of those measurable functions $f$ on $\D$ such that
\[\|f\|_{L^{\infty}(v)}:=\textrm{ess} \sup_{z\in \D}|f(z)|\,v(z)< \infty, \]
and let $ A^{\infty}(v)$ be the space of all analytic functions in $L^{\infty}(v)$.
\begin{theorem}\label{BP-2}
Let  $ \omega\in\mathcal{E}.$ The Bergman projection
 $ P_\omega : L^{\infty}(\omega^{1/2})\rightarrow A^{\infty}(\omega^{1/2})$ is bounded.
\end{theorem}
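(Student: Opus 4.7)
The plan is to reduce the $L^{\infty}$ case to a direct application of the integral kernel estimate in Lemma \ref{kernelestimat}, which is exactly the bound that powered the $p=1$ case of Theorem \ref{projbd}. So the $L^{\infty}$ case is in a sense dual to $L^{1}$ and should follow by the same single ingredient.

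Concretely, given $f \in L^{\infty}(\omega^{1/2})$, the definition of the norm says that $|f(\xi)|\,\omega(\xi)^{1/2} \le \|f\|_{L^{\infty}(\omega^{1/2})}$ for almost every $\xi \in \D$, so
\[
|f(\xi)| \le \|f\|_{L^{\infty}(\omega^{1/2})}\, \omega(\xi)^{-1/2} \quad \text{a.e.}
\]
Plugging this into the defining integral for $P_{\omega}f(z)$ and pulling out the constant yields
\[
|P_{\omega}f(z)| \le \|f\|_{L^{\infty}(\omega^{1/2})} \int_{\D} |K_{z}(\xi)|\, \omega(\xi)^{1/2}\, dA(\xi).
\]
Invoking Lemma \ref{kernelestimat} bounds the right-hand integral by $C\,\omega(z)^{-1/2}$, which after multiplying by $\omega(z)^{1/2}$ and taking the essential supremum in $z$ gives
\[
\|P_{\omega}f\|_{A^{\infty}(\omega^{1/2})} \le C\, \|f\|_{L^{\infty}(\omega^{1/2})},
\]
so the only remaining point is to verify that $P_{\omega}f$ is indeed holomorphic. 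For this I would use the identity $\overline{K_{z}(\xi)} = K_{\xi}(z)$, which rewrites the projection as $P_{\omega}f(z) = \int_{\D} f(\xi)\, K_{\xi}(z)\, \omega(\xi)\, dA(\xi)$; since $K_{\xi}(z)$ is holomorphic in $z$ and the integrand is dominated uniformly on compact subsets of $\D$ by the calculation above, standard differentiation under the integral (or Morera's theorem combined with Fubini) ensures $P_{\omega}f \in H(\D)$.

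There is essentially no obstacle here: all the analytic work was done in proving Lemma \ref{kernelestimat}. The only mild subtlety is the analyticity verification, but this follows routinely from the dominated convergence-type control provided by the same kernel estimate. So the theorem is really a short corollary of Lemma \ref{kernelestimat}, completing the range $1 \le p \le \infty$ of boundedness results for $P_{\omega}$ on the scale $L^{p}(\omega^{p/2})$.
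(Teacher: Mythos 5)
Your proof is correct and follows exactly the paper's own argument: bound $|f(\xi)|$ by $\|f\|_{L^{\infty}(\omega^{1/2})}\,\omega(\xi)^{-1/2}$ inside the integral and apply Lemma \ref{kernelestimat}. The additional remark on verifying analyticity of $P_{\omega}f$ is a harmless (and valid) supplement that the paper leaves implicit.
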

\begin{proof}
Let $f\in L^{\infty}(\omega^{1/2})$. By Lemma \ref{kernelestimat}, we get
\begin{displaymath}
\begin{split}
\omega(z)^{1/2}\,|P_\omega f (z)|&\le \,\omega(z)^{1/2}\int_{\D} |f(\xi)|\,|K_  z(\xi)|\,\omega(\xi)\,  dA(\xi)
\\
&\le \,\|f\|_{L^{\infty}(\omega^{1/2})} \cdot \omega(z)^{1/2}\int_{\D} |K_  z(\xi)|\,\omega(\xi)^{1/2}\,  dA(\xi)
\\
&\lesssim  \|f\|_{L^{\infty}(\omega^{1/2})}.
\end{split}
\end{displaymath}
This shows that $P_{\omega}$ is bounded.
\end{proof}
As a consequence of the results obtained on bounded projections, we obtain the following result.
\begin{coro}\label{C-RF}
Let $\omega \in \mathcal{E}$. The following are equivalent:
\begin{enumerate}
\item[(i)] $f=P_{\omega} f$ for every $f\in A^1(\omega ^{1/2})$;

\item[(ii)] $A^2(\omega)$ is dense in $A^1(\omega^{1/2})$.
\end{enumerate}
\end{coro}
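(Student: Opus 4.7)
The plan is to combine the boundedness of $P_\omega$ on $L^1(\omega^{1/2})$ from Theorem \ref{projbd} with the reproducing property on $A^2(\omega)$. First I would observe that $A^2(\omega)\subset A^1(\omega^{1/2})$ with continuous inclusion, which follows from the Cauchy--Schwarz inequality together with $|\D|<\infty$; this makes the statement meaningful in the first place.

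For the implication (ii)$\Rightarrow$(i), I would take $f\in A^1(\omega^{1/2})$ and a sequence $f_n\in A^2(\omega)$ with $\|f-f_n\|_{A^1(\omega^{1/2})}\to 0$. Since each $f_n$ lies in $A^2(\omega)$, the reproducing property gives $P_\omega f_n=f_n$. Applying the $p=1$ case of Theorem \ref{projbd}, the operator $P_\omega$ is bounded on $L^1(\omega^{1/2})$, so $P_\omega f_n\to P_\omega f$ in $A^1(\omega^{1/2})$; passing to the limit in the identity $f_n=P_\omega f_n$ yields $f=P_\omega f$.

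For the converse (i)$\Rightarrow$(ii), I would use a truncate-and-project argument. Given $f\in A^1(\omega^{1/2})$, set $f_r=f\cdot\chi_{\{|z|\le r\}}$ for $0<r<1$. Since $f$ is holomorphic (hence bounded on $\{|z|\le r\}$) and $\omega$ is integrable on $\D$, we have $f_r\in L^2(\D,\omega\,dA)$, and therefore $P_\omega f_r\in A^2(\omega)$ because $P_\omega$ is the orthogonal projection from $L^2(\D,\omega\,dA)$ onto $A^2(\omega)$. Using (i) to rewrite $f=P_\omega f$ and then Theorem \ref{projbd} once more,
\[
\|f-P_\omega f_r\|_{A^1(\omega^{1/2})}=\|P_\omega(f-f_r)\|_{A^1(\omega^{1/2})}\lesssim \|f-f_r\|_{L^1(\omega^{1/2})},
\]
and the right-hand side tends to $0$ as $r\to 1^-$ by the dominated convergence theorem. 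The sequence $\{P_\omega f_r\}\subset A^2(\omega)$ therefore witnesses the density.

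The argument is rather light once Theorem \ref{projbd} is available. The only point that requires attention is the verification that the truncated functions $f_r$ lie in $L^2(\D,\omega\,dA)$, since this is exactly what is needed to ensure that their projections $P_\omega f_r$ actually belong to $A^2(\omega)$ and can serve as approximants from the prescribed subspace; everything else is formal manipulation with the bounded operator $P_\omega$.
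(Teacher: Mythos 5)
Your proof is correct and follows essentially the same route as the paper: the implication (ii)$\Rightarrow$(i) is identical, and for (i)$\Rightarrow$(ii) the paper likewise projects $L^2(\omega)$-approximants of $f$ and invokes the $p=1$ case of Theorem \ref{projbd}, the only difference being that it appeals abstractly to the density of $L^2(\omega)$ in $L^1(\omega^{1/2})$ where you construct the approximants explicitly by truncating $f$ to $\{|z|\le r\}$. Both versions are valid; your explicit truncation is a concrete instance of the same idea.
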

\begin{proof}
By the definition of the projection $P_{\omega}$ and the properties of the reproducing kernel, we always have $f=P_{\omega} f$ for every $f\in A^2(\omega)$. Thus (i) is easily implied by the density condition in (ii) and the boundedness of $P_{\omega}$ in $L^1(\omega ^{1/2})$. Conversely, suppose that (i) holds and let $f\in A^1(\omega ^{1/2})$. Since $L^2(\omega)$ is dense in $L^1(\omega ^{1/2})$, we can find functions $g_ n \in L^2(\omega)$ with $\|f-g_ n\|_{L^1(\omega^{1/2})}\rightarrow 0$. Set $f_ n=P_{\omega} g_ n\in A^2(\omega)$. Then, by (i) and Theorem \ref{projbd}, we have
\begin{displaymath}
\|f-f_ n \|_{A^1(\omega^{1/2})}=\|P_{\omega} f -P_{\omega} g_ n\|_{A^1(\omega^{1/2})}\le \|P_{\omega}\|\cdot \|f-g_ n\|_{L^1(\omega^{1/2})}\rightarrow 0,
\end{displaymath}
proving that $A^2(\omega)$ is dense in $A^1(\omega ^{1/2})$. The proof is complete.
\end{proof}
The identity $f=P_{\omega} f$ appearing in (i) is usually referred as the reproducing formula. If the weight $\omega$ is radial, then the polynomials are dense in $A^1(\omega ^{1/2})$ and thus the reproducing formula in (i) holds. The fact that the reproducing formula also holds for non radial weights in the class $\mathcal{E}$ is not obvious, and is our goal to establish that result in the next subsections.

\subsection{Associated weighted Bergman spaces}

We need to consider reproducing kernels $K_ z^*$ of the Bergman space $A^2(\omega_ *)$, where the weight $\omega_{*}$ is of the form
\begin{equation}\label{Eq-AW}
\omega_ *(z)=\omega(z)\,\tau(z)^{\alpha},\qquad \alpha \in \mathbb{R}.
\end{equation}
\begin{lemma}\label{SH-C}
Let $\omega \in \mathcal{L}^*$ and $0<p<\infty$. Then
\[ |f(z)|^p \,\omega_ *(z) \lesssim   \frac{1}{\tau(z)^2}\int_{D(\delta\tau(z))}{|f(\zeta)|^p \,\omega_ *(\zeta) \,   dA(\zeta)},\]
for all $ f \in H(\D)$  and all $\delta > 0 $ sufficiently small.
\end{lemma}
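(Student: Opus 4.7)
The plan is to deduce this directly from Lemma \ref{subHarmP} combined with the comparability \eqref{asymptau} of $\tau$ on a disk $D(\delta\tau(z))$.

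First I would apply Lemma \ref{subHarmP} with $\beta=1$ to obtain
\[
|f(z)|^p\,\omega(z) \lesssim \frac{1}{\tau(z)^2}\int_{D(\delta\tau(z))} |f(\zeta)|^p\,\omega(\zeta)\,dA(\zeta),
\]
valid for all $\delta>0$ sufficiently small and $f\in H(\D)$. Next I would multiply both sides by the scalar $\tau(z)^\alpha$, which simply places $\tau(z)^\alpha$ in front of the integral on the right.

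The remaining step is to move $\tau(z)^\alpha$ inside the integral at the cost of replacing it by $\tau(\zeta)^\alpha$. This is exactly where \eqref{asymptau} enters: provided $\delta\in(0,m_\tau)$, for every $\zeta\in D(\delta\tau(z))$ we have $\frac12\tau(z)\le\tau(\zeta)\le 2\tau(z)$, hence $\tau(z)^\alpha\asymp\tau(\zeta)^\alpha$ with constants depending only on $\alpha$ (handling the cases $\alpha\ge 0$ and $\alpha<0$ uniformly). Substituting yields
\[
|f(z)|^p\,\omega(z)\,\tau(z)^\alpha \lesssim \frac{1}{\tau(z)^2}\int_{D(\delta\tau(z))} |f(\zeta)|^p\,\omega(\zeta)\,\tau(\zeta)^\alpha\,dA(\zeta),
\]
which, by the definition $\omega_*=\omega\,\tau^\alpha$ in \eqref{Eq-AW}, is the desired inequality.

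There is no real obstacle here; the only thing to watch is that $\delta$ must be restricted to $(0,m_\tau)$ so that \eqref{asymptau} holds, and this is already implicit in the hypothesis that $\delta$ be sufficiently small, the same requirement already present in Lemma \ref{subHarmP}. The implicit constant depends on $p$, $\alpha$, the weight $\omega$, and $\delta$, but not on $f$ or $z$.
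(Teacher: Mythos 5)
Your proposal is correct and follows exactly the same route as the paper: apply Lemma \ref{subHarmP} with $\beta=1$, multiply by $\tau(z)^{\alpha}$, and use \eqref{asymptau} to exchange $\tau(z)^{\alpha}$ for $\tau(\zeta)^{\alpha}$ inside the integral over $D(\delta\tau(z))$. Nothing is missing; the restriction $\delta\in(0,m_\tau)$ you note is precisely the ``sufficiently small'' requirement in the statement.
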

\begin{proof}
This is an immediate consequence of Lemma \ref{subHarmP} and \eqref{asymptau}. Indeed,
\begin{displaymath}
\begin{split}
|f(z)|^p\,\omega_ *(z)&=|f(z)|^p \,\omega(z)\,\tau(z)^{\alpha}\lesssim \tau(z)^{\alpha-2} \int_{D(\delta\tau(z))}|f(\zeta)|^p \,\omega(\zeta) \,   dA(\zeta)
\\
&\asymp \frac{1}{\tau(z)^2} \int_{D(\delta\tau(z))}{|f(\zeta)|^p \,\omega_ * (\zeta) \,   dA(\zeta)}.
\end{split}
\end{displaymath}
This finishes the proof.
\end{proof}

As in Lemma \ref{subHarmP}, it suffices that $f$ be holomorphic in a neighbourhood of $D(\delta \tau(z))$ to get the conclusion in Lemma \ref{SH-C}. As a consequence, we get the estimate $\|K^*_ z\|^2_{A^2(\omega_ *)} \omega_ *(z)\lesssim \tau(z)^{-2}$.

We also need the analogue of Theorem \ref{RK-PE} for the reproducing kernels $K^*_ z$. Since we don't know if $\omega_ *$ belongs to the class $\mathcal{E}$, we can not deduce the result from Theorem \ref{RK-PE}, so that we must repeat the proof with appropriate modifications. Before doing that, we need to establish more estimates of the solutions of the $\overline{\partial}$- equation, a result that can be of independent interest.

\begin{proposition}\label{dbar-C}
Let  $\omega \in \mathcal{E}$ and consider the associated weight
\[\omega_ *(z)=\omega(z)\,\tau(z)^{\alpha},\qquad \alpha \in \mathbb{R}.\]
There is a solution $u$ of the  equation $\overline{\partial} u=f$ satisfying
\[\int_{\D}|u(z)|^p \,\omega_ *(z)^{p/2}\,dA(z)\le C \int_{\D} |f(z)|^p \,\omega_ *(z)^{p/2}\,\tau(z)^p\,dA(z).\]
for $1\le p<\infty$. Moreover, also one has  the $L^{\infty}$-estimate
\begin{displaymath}
\sup_{z\in \D} |u(z)| \,\omega_ *(z)^{1/2}\le C \,\sup_{z\in \D} |f(z)| \,\omega_ *(z)^{1/2}\,\tau(z).
\end{displaymath}
\end{proposition}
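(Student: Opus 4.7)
My plan is to prove the proposition in three stages that build on one another: (i) a weighted $L^2$ bound from Berndtsson's theorem with a shifted weight; (ii) an $L^\infty$ bound by decomposing $f$ into a piece local to the evaluation point (handled by an explicit Cauchy transform) and a far piece (handled by a localized version of (i)); and (iii) the full range $1\le p<\infty$ by interpolation between the $L^1$ and $L^\infty$ bounds.

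For stage (i), set $\tilde\varphi = \varphi - (\alpha/2)\log\tilde\tau$, where $\tilde\tau\in C^2(\D)$ is a mollification of $\tau$ with $\tilde\tau\asymp\tau$, and apply Berndtsson's Theorem \ref{Bern} with the auxiliary $\psi=-(\alpha/2)\log\tilde\tau$, using Hörmander's Theorem \ref{hormander} (applied to $\varphi+\psi$) to furnish the auxiliary $\overline\partial$-estimate required as input. The key side condition $|\partial\psi|^2\le C_1\Delta\varphi$ with $C_1<1$ follows from $\Delta\varphi\asymp\tau^{-2}$ combined with condition (E), which allows the effective Lipschitz constant of $\tilde\tau$ to be made small at the relevant scales; this is exactly why the hypothesis is stated for $\omega\in\mathcal{E}$ rather than the weaker $\mathcal{L}^*$. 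The conclusion yields a solution $u$ with $\int_\D|u|^2\omega_*\,dA\lesssim \int_\D|f|^2\tau^2\omega_*\,dA$, the $p=2$ case of the claim.

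For stage (ii), fix $z_0\in\D$ and a cutoff $\chi\in C_c^\infty$ supported in $D(\delta\tau(z_0))$ with $\chi\equiv 1$ on $D(\delta\tau(z_0)/2)$. Split $f=\chi f+(1-\chi)f$ and write the candidate solution as $u=v_1+v_2$, where
\[
v_1(z)=\frac{1}{\pi}\int_\D \frac{\chi(w)f(w)}{w-z}\,dA(w)
\]
is the Cauchy transform of $\chi f$ and $v_2$ solves $\overline\partial v_2=(1-\chi)f$. On the support of $\chi$ we have $\omega_*\asymp\omega_*(z_0)$ and $\tau\asymp\tau(z_0)$ by (B), and a direct estimation of the Cauchy integral gives $\omega_*(z_0)^{1/2}|v_1(z_0)|\lesssim \|f\tau\omega_*^{1/2}\|_\infty$. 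For $v_2$ I would construct a \emph{localized} $L^2$ solution by applying Berndtsson with an auxiliary weight concentrated near $z_0$, modeled on the function $\varphi_\xi$ from the proof of Theorem \ref{RK-PE}; this yields the sharp scale-aware bound $\int_{D(\delta\tau(z_0))}|v_2|^2\omega_*\,dA\lesssim \tau(z_0)^2\|f\tau\omega_*^{1/2}\|_\infty^2$. Since $\overline\partial v_2$ vanishes on $D(\delta\tau(z_0)/2)$, $v_2$ is holomorphic there, and Lemma \ref{SH-C} applied to $v_2$ then converts the local $L^2$ bound to the desired $\omega_*(z_0)^{1/2}|v_2(z_0)|\lesssim \|f\tau\omega_*^{1/2}\|_\infty$.

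Stage (iii) is obtained by interpolating the $L^\infty$ bound against the $L^1$ bound, the latter proved by the same decomposition: for $v_1$ one integrates the Cauchy transform against $\omega_*^{1/2}$ and uses Fubini, while $v_2$ is handled by an $L^1$-version of the sub-mean value property together with the $L^2$ estimate from stage (i). The main obstacle is stage (ii): producing the localized $L^2$ estimate with the correct $\tau(z_0)^2$ scaling, without which the pointwise bound on $v_2$ loses an unwanted factor of $\tau(z_0)^{-1}$. Resolving this requires reproducing, in the $\overline\partial$ setting, the localization trick from the proof of Theorem \ref{RK-PE}, namely augmenting Berndtsson with a subharmonic weight concentrated at $z_0$ whose derivative is controlled by $\Delta\varphi$ through Lemma \ref{Lfi}.
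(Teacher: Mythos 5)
Your overall strategy is genuinely different from the paper's, but stage (i) --- the step everything else leans on --- has a real gap. To apply Berndtsson's Theorem~\ref{Bern} with $\psi=-(\alpha/2)\log\tilde\tau$ you need $|\partial\psi|^2\le C_1\Delta\varphi$ with $C_1<1$, i.e. $\tfrac{\alpha^2}{4}\,|\partial\tilde\tau|^2/\tilde\tau^2\le C_1\Delta\varphi\asymp\tau^{-2}$, which amounts to $|\partial\tilde\tau|\lesssim 1/|\alpha|$ with a specific small constant. Condition (E) does not deliver this: it controls the oscillation of $\tau$ only at separations $|z-\xi|>b_m\tau(\xi)$, whereas any mollification $\tilde\tau$ must be taken at scale $R\lesssim 1-|a|\asymp\tau(a)$ to stay inside $\D$, and at that scale the only available bound is the raw Lipschitz constant $c_2$ from condition (B). So $|\partial\tilde\tau|\lesssim c_2$ is all you get, and the required inequality fails once $|\alpha| c_2$ is large. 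The auxiliary input to Berndtsson is also unverified: H\"ormander applied to $\varphi+\psi$ gives the hypothesis of Theorem~\ref{Bern} only if $\Delta(\varphi+\psi)\ge\Delta\varphi$, i.e. $\Delta\psi\ge 0$, which is not known for $\mp\log\tilde\tau$. Since stage (ii) reuses the same shifted-weight Berndtsson argument, it inherits the problem; and stage (iii) has an independent issue: interpolation requires a single linear solution operator bounded at both endpoints, while your $u=v_1+v_2$ is built from a cutoff centered at the evaluation point $z_0$ and so produces a different solution for each $z_0$.

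The paper sidesteps all of this by constructing one explicit linear solution operator: using Oleinik's covering $\{D(\delta_1\tau(a_j))\}$, a subordinate partition of unity $\{\chi_j\}$, and the normalized kernels $h_{a_j}=K_{a_j}/\|K_{a_j}\|_{A^2(\omega)}$, it sets $S_jf(z)=h_{a_j}(z)\int_\D \frac{f(\zeta)\chi_j(\zeta)}{(\zeta-z)\,h_{a_j}(\zeta)}\,dA(\zeta)$ and $Sf=\sum_j S_jf$, which solves $\overline\partial(Sf)=f$ by Cauchy--Pompeiu. The pointwise kernel bounds from Theorem~\ref{RK-PE} then give a Schur-type estimate $\int_\D|G(z,\zeta)|\,\tau(\zeta)^{-1}\,dA(\zeta)\le C$ for the associated integral kernel, from which the $L^p$ bounds for all $1\le p\le\infty$ follow simultaneously by H\"older and Fubini --- no smallness condition on $\alpha$, no interpolation, and a single operator $S$. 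If you want to salvage your route, you would need either a quantitative version of Berndtsson's theorem whose hypotheses you can actually verify for $\psi=-(\alpha/2)\log\tilde\tau$ under conditions (A), (B), (E) alone, or to replace stage (i) by an explicit kernel construction of the above type.
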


\begin{proof}
We follow the method used in \cite{CH}. Fix $\delta \in (0,m_{\tau})$ sufficiently small so that \eqref{RK-Diag} holds, and take $\delta_0, \delta_ 1 >0$ with $2\delta_ 1\le \delta_ 0\le \delta/5$. By Oleinik's covering lemma \cite{O}, there is a sequence of points $\{a_ j\}\subset \D$, such that the following
conditions are satisfied:
\begin{enumerate}
\item[$(i)$] \,$a_ j\notin D(\delta_ 1 \tau(a_ k))$, \,$j\neq k$.

\item[$(ii)$] \, $\bigcup_ j D(\delta_ 1 \tau(a_ j))=\D$.

\item[$(iii)$] \, $\widetilde{D}(\delta _ 1 \tau(a_ j))\subset
D(3\delta_ 1\tau(a_ j))$, where
\[\widetilde{D}(\delta_ 1 \tau(a_
j))=\bigcup_{z\in D(\delta_ 1 \tau(a_ j))}\!\!\!\!D(\delta_ 1 \tau(z)),\quad
j=1,2,\dots\]

\item[$(iv)$] \,$\big \{D(3\delta_ 1 \tau(a_ j))\big \}$ is a
covering of $\D$ of finite multiplicity $N$.\\
\end{enumerate}
Let $\{\chi_ j\}$ be a partition of the unity subordinate to the covering $\{D(\delta_ 1\tau(a_ j))\}.$ For each $a\in \D$, let $h_ a(z)$ denote the normalized reproducing kernels in $A^2(\omega)$, that is, $h_ a(z)=K_ a(z)/\|K_ a\|_{A^2(\omega)}$. By \eqref{RK-Diag}, Lemma \ref{RK-norm} and Theorem \ref{RK-PE}, the functions $h_ a$ satisfy
\begin{enumerate}
\item[(a)] $|h_ a(z)|\asymp \tau(z)^{-1}\,\omega(z)^{-1/2},\qquad z\in D(\delta \tau(a)).$\\

\item[(b)] $\displaystyle {|h_ a(z)|\lesssim \tau(z)^{-1}\,\omega(z)^{-1/2}\left (\frac{\min(\tau(z),\tau(a))}{|z-a|}\right )^M},\qquad z\in \D.$
\end{enumerate}
For each $j$ define
\[ S_ j f(z)=h_{a_ j}(z) \int_{\D} \frac{f(\zeta)\,\chi_ j(\zeta)}{(\zeta-z) h_{a_ j}(\zeta)}\,dA(\zeta).\]
Since $h_{a_ j}(z)$ is holomorphic on $\D$, by the Cauchy-Pompeiu formula we have $\overline{\partial}( S_ j f) =f\chi_ j$, and therefore
\begin{displaymath}
Sf(z)=\sum_ j S_ j f(z)=\omega_ *(z)^{-1/2}\int_{\D} G(z,\zeta) \,f(\zeta)\,\omega_ *(\zeta)^{1/2}\, dA(\zeta)
\end{displaymath}
solves the equation $\overline{\partial}(Sf)=f$, where the kernel $G(z,\zeta)$ is given by
\begin{displaymath}
G(z,\zeta)=\sum_ j \frac{h_{a_ j}(z) \,\chi_ j(\zeta)}{(\zeta-z) h_{a_ j}(\zeta)}\,\,\omega_ *(z)^{1/2}\,\omega_ *(\zeta)^{-1/2}.
\end{displaymath}
Denoting $g(\zeta)=f(\zeta)\,\omega_ *(\zeta)^{1/2}$, the required estimate
\begin{displaymath}
\int_{\D} |Sf(z)|^p \,\omega_ *(z)^{p/2}\,dA(z)\le C \int_{\D} |f(z)|^p\,\omega_ *(z)^{p/2}\,\tau(z)^p\, dA(z)
\end{displaymath}
translates to the estimate
\begin{equation}\label{Req-E}
\int_{\D} |Tg(z)|^p \,dA(z)\le C \int_{\D} |g(z)|^p\,\tau(z)^p\, dA(z),
\end{equation}
with
\[ Tg(z)=\int_{\D} G(z,\zeta)\,g(\zeta)\,dA(\zeta).\]
Now we claim that the integral estimate
\begin{equation}\label{IE-1}
\int_{\D} |G(z,\zeta)| \,\frac{dA(\zeta)}{\tau(\zeta)}\le C
\end{equation}
holds. Indeed,  if $\zeta \in D(\delta_ 0 \tau(z))\cap D(\delta_ 1 \tau(a_ j))$, then using \eqref{asymptau} we see that
\begin{displaymath}
\begin{split}
 |z-a_ j|& \le |z-\zeta|+|\zeta-a_ j| <\delta_ 0 \tau(z)+\delta_ 1 \tau(a_ j)
 \\
 &\le 4\delta_ 0\tau(a_ j)+\delta_ 1 \tau(a_ j) <\delta \tau(a_ j).
\end{split}
\end{displaymath}
 By property (a), $|h_{a_ j}(z)|\asymp \tau(z)^{-1}\,\omega(z)^{-1/2}$, and obviously we also have $|h_{a_ j}(\zeta)|\asymp \tau(\zeta)^{-1}\,\omega(\zeta)^{-1/2}$. Therefore, for $\zeta \in D(\delta_ 0 \tau(z))$,
\begin{displaymath}
|G(z,\zeta)| \lesssim \sum_ j \frac{\chi_ j(\zeta)}{|z-\zeta|}\lesssim \frac{N}{|z-\zeta|},
\end{displaymath}
since there are at most $N$ points $a_ j$ with $\zeta \in D(\delta_ 1\tau(a_ j))$. This gives
\begin{equation}\label{IE-2}
\int_{D(\delta_ 0 \tau(z))} |G(z,\zeta)| \,\frac{dA(\zeta)}{\tau(\zeta)}\le C\frac{1}{\tau(z)}\int_{D(\delta_ 0 \tau(z))} \frac{dA(\zeta)}{|z-\zeta|}\le C,
\end{equation}
after passing to polar coordinates. On the other hand, if $\zeta \notin D(\delta_ 0 \tau(z))$ we use property (b) to get
\begin{displaymath}
\begin{split}
|G(z,\zeta)|&\lesssim \frac{\omega_ *(z)^{1/2}\,\omega_ *(\zeta)^{-1/2} }{\omega(z)^{1/2}\,\tau(z)}
\sum_ j \frac{\chi_ j(\zeta)}{|z-\zeta| \,|h_{a_ j}(\zeta)|}\left (\frac{\min(\tau(z),\tau(a_ j))}{|z-a_ j|}\right )^M
\\
& \asymp \frac{\tau(z)^{\frac{\alpha}{2}-1}\, \omega(\zeta)^{1/2}\,\tau(\zeta)}{\omega_ *(\zeta)^{1/2}}
\sum_ j \frac{\chi_ j(\zeta)}{|z-\zeta| }\left (\frac{\min(\tau(z),\tau(a_ j))}{|z-a_ j|}\right )^M.
\end{split}
\end{displaymath}
Now, for $\zeta \in \big (\D \setminus D(\delta_ 0 \tau(z)) \big ) \cap D(\delta_ 1 \tau(a_ j))$, we have
\begin{displaymath}
\begin{split}
 |z-\zeta|&\le |z-a_ j| +|\zeta-a_ j| \le |z-a_ j| +\delta_ 1 \tau(a_ j)
 \\
 &=|z-a_ j|+\delta_ 1 \tau(z) +\delta_ 1 (\tau(a_ j)-\tau(z))
 \\
 &\le |z-a_ j|+\frac{\delta_ 1}{\delta_ 0} |z-\zeta| +c_ 2\delta_ 1 |z-a_ j|,
 \end{split}
\end{displaymath}
where $c_ 2$ is the constant appearing in condition (B) in the definition of the class $\mathcal{L}$. Since $\delta_ 1/\delta_ 0\le 1/2$, we obtain $|z-\zeta| \le C |z-a_ j|$, which together with the fact that $\tau(\zeta)\asymp \tau(a_ j)$ for $\zeta \in D(\delta_ 1 \tau(a_ j))$ yields
\[\frac{\min(\tau(z),\tau(a_ j))}{|z-a_ j|} \lesssim \frac{\min(\tau(z),\tau(\zeta))}{|z-\zeta|}, \qquad  \zeta \in D(\delta_ 1 \tau(a_ j))\setminus D(\delta_ 0 \tau(z)).\]
Therefore, since there are at most $N$ points $a_ j$ with $\zeta \in D(\delta_ 1\tau(a_ j))$,
\begin{displaymath}
|G(z,\zeta)|\lesssim \frac{\tau(z)^{\frac{\alpha}{2}-1}}{\tau(\zeta)^{\frac{\alpha}{2}-1}\,|z-\zeta|}
\left (\frac{\min(\tau(z),\tau(\zeta))}{|z-\zeta|}\right )^M,\qquad \zeta \notin D(\delta_ 0 \tau(z)).
\end{displaymath}
This gives
\begin{equation*}\label{IE-3}
\begin{split}
\int_{\D\setminus D(\delta_ 0 \tau(z))}\!\!\!\! |G(z,\zeta)| \,\frac{dA(\zeta)}{\tau(\zeta)}
&\lesssim \tau(z)^{\frac{\alpha}{2}-1}\!\int_{\D\setminus D(\delta_ 0 \tau(z))}\!\!\!\!\frac{\big(\min(\tau(z),\tau(\zeta))\big)^M\,dA(\zeta)}{|z-\zeta|^{M+1}\,\tau(\zeta)^{\alpha/2}}
\\
&\le C,
\end{split}
\end{equation*}
where the lat inequality is proved in a similar manner as in the proof of Lemma \ref{kernelestimat}, but in the case $\alpha<0$ one must use that $\tau(\zeta)\lesssim 2^k \tau(z)$ for $\zeta \in R_ k(z)$ (a consequence of condition (B) in the definition of the class $\mathcal{L}$), where $R_ k(z)$ are the same sets used in the proof of Lemma \ref{kernelestimat}. This together with \eqref{IE-2} establish \eqref{IE-1}. \\

Using \eqref{IE-1}, it is straightforward to see that the $L^{\infty}$-estimate
\begin{displaymath}
\sup_{z\in \D} |Sf(z)| \,\omega_ *(z)^{1/2}\le C \sup_{z\in \D} |f(z)| \,\omega_ *(z)^{1/2}\,\tau(z)
\end{displaymath}
holds. Now, let $1\le p<\infty$. By H\"{o}lder's inequality and \eqref{IE-1},
\begin{displaymath}
\begin{split}
|Tg(z)|^p  & \le  \left( \int_{\D} |G(z,\zeta)| \,|g(\zeta)|^p \,\tau(\zeta)^p \,\frac{dA(\zeta)}{\tau(\zeta)} \right) \left ( \int_{\D} |G(z,\zeta)| \,\frac{dA(\zeta)}{\tau(\zeta)}\right )^{p-1}
\\
& \lesssim \int_{\D} |G(z,\zeta)| \,|g(\zeta)|^p \,\tau(\zeta)^p \,\frac{dA(\zeta)}{\tau(\zeta)}.
\end{split}
\end{displaymath}
This, Fubini's theorem and property (a) gives
\begin{displaymath}
\begin{split}
\int_{\D} &|Tg(z)|^p \,dA(z)\lesssim \int_{\D}  |g(\zeta)|^p \,\tau(\zeta)^{p-1} \left (\int_{\D} |G(z,\zeta)| dA(z) \right ) dA(\zeta)
\\
& \le \sum_ j \int_{D(\delta_ 1 \tau(a_ j))} \frac{|g(\zeta)|^p \,\tau(\zeta)^{p-1}}{|h_{a_ j}(\zeta)|\,\omega_ *(\zeta)^{1/2}}
\left (\int_{\D} \frac{|h_{a_ j}(z)| \omega_ *(z)^{1/2}dA(z)}{|z-\zeta|}\right ) dA(\zeta)
\\
& \asymp \sum_ j \int_{D(\delta_ 1 \tau(a_ j))} \frac{|g(\zeta)|^p \,\tau(\zeta)^{p}}{\tau(\zeta)^{\alpha/2}}
\left (\int_{\D} \frac{|h_{a_ j}(z)| \omega_ *(z)^{1/2}dA(z)}{|z-\zeta|}\right ) dA(\zeta).
\end{split}
\end{displaymath}
We handle the inside integral in a similar form as done before. By property (a), for $\zeta \in D(\delta_ 1 \tau(a_ j))$, we have
\begin{equation*}\label{LE-1}
\int_{D(\delta_ 0 \tau(a_ j))} \!\!\frac{|h_{a_ j}(z)| \omega_ *(z)^{1/2}}{|z-\zeta|}dA(z)\asymp \int_{D(\delta_ 0 \tau(a_ j))} \!\!\frac{ \tau (z)^{\frac{\alpha}{2}-1}}{|z-\zeta|}\,dA(z)\lesssim \tau(\zeta)^{\alpha/2}.
\end{equation*}
The integral from outside the disc $D(\delta_ 0 \tau(a_ j))$ is estimated with the same method as done in the proof of \eqref{IE-1} using property (b), so that we obtain
\begin{displaymath}
\int_{\D} \frac{|h_{a_ j}(z)| \omega_ *(z)^{1/2} dA(z)}{|z-\zeta|} \lesssim \tau(\zeta)^{\alpha/2}.
\end{displaymath}
Putting this in the previous estimate we finally obtain
\begin{displaymath}
\int_{\D} |Tg(z)|^p \,dA(z)\lesssim \sum_ j \int_{D(\delta_ 1 \tau(a_ j))} \!\!\!\! \!\!|g(\zeta)|^p \,\tau(\zeta)^{p}
 dA(\zeta)\lesssim \int_{\D} |g(\zeta)|^p \,\tau(\zeta)^{p}
 dA(\zeta),
\end{displaymath}
since $\{D(\delta_ 1 \tau(a_ j))\}$ is a covering of $\D$ of finite multiplicity. This proves \eqref{Req-E} completing the proof of the proposition.
\end{proof}

Now we can prove the analogue of Theorem \ref{RK-PE} for the the reproducing kernels $K^*_ z$.

\begin{lemma}\label{PE-C}
Let $\omega \in \mathcal{E}$, and $K^*_z $ be the reproducing kernel of  $A^2(\omega_ *)$ where $\omega_ *$ is the associated weight given by \eqref{Eq-AW}. For each $M\ge 1$, there exists a constant $ C > 0$ (depending on $M$) such that for each  $z,\xi \in \D$ one has
\begin{equation*}
 |K^*_z(\xi)|\leq C \frac{1}{\tau(z)}\frac{1}{\tau(\xi)}\,\omega_ *(z)^{-1/2}\omega_ *(\xi)^{-1/2} \left (\frac{\min(\tau(z),\tau(\xi))}{|z-\xi|}\right )^M.
\end{equation*}
\end{lemma}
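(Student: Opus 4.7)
The proof follows the strategy of Theorem \ref{RK-PE}, with two substitutions required by the fact that $\omega_*$ may fail to belong to $\mathcal{L}^*$ (writing $\omega_*=e^{-2\psi}$ with $\psi=\varphi-(\alpha/2)\log\tau$, the sign of $\Delta\psi$ is not controlled): Lemma \ref{subHarmP} is replaced by Lemma \ref{SH-C}, and the H\"ormander-Berndtsson input is replaced by an argument built on Proposition \ref{dbar-C}. I split into the two familiar cases.

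In the \emph{near-diagonal case} $D(\delta\tau(z))\cap D(\delta\tau(\xi))\neq\emptyset$ for a fixed $\delta\in(0,m_\tau)$, the bound $\min(\tau(z),\tau(\xi))/|z-\xi|\gtrsim 1$ from \eqref{asymptau} together with $|K^*_z(\xi)|\le\|K^*_z\|_{A^2(\omega_*)}\|K^*_\xi\|_{A^2(\omega_*)}$ and the a priori bound $\|K^*_w\|^2_{A^2(\omega_*)}\omega_*(w)\lesssim\tau(w)^{-2}$ (noted after Lemma \ref{SH-C}) yield the conclusion at once. In the \emph{off-diagonal case}, fix a cutoff $0\le\chi\le 1$ supported in $D(\delta\tau(\xi))$, equal to one on $D(\delta\tau(\xi)/2)$, and satisfying $|\overline\partial\chi|^2\lesssim\chi/\tau(\xi)^2$. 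Applying Lemma \ref{SH-C} to $K^*_z$ at $\xi$ and invoking duality gives
\[|K^*_z(\xi)|^2\omega_*(\xi)\lesssim\tau(\xi)^{-2}\sup_f|P_{\omega_*}(f\chi)(z)|^2,\]
where the supremum is over holomorphic $f$ on $D(\delta\tau(\xi))$ with $\int|f|^2\omega_*\,dA=1$. Setting $u=f\chi-P_{\omega_*}(f\chi)$, the minimal-norm solution in $L^2(\omega_* dA)$ of $\overline\partial u=f\overline\partial\chi$, one has $|P_{\omega_*}(f\chi)(z)|=|u(z)|$ since $\chi(z)=0$.

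The main obstacle is the twisted $L^2$ bound
\[\int_\D|u|^2\omega_* e^{2\varphi_\xi}\,dA\lesssim 1,\]
with $\varphi_\xi$ the subharmonic function from Lemma \ref{Lfi} chosen for the prescribed exponent $M$ (and $\varepsilon=1/2$). The key observation is that although $\omega_*\notin\mathcal{L}^*$ in general, the weight $\omega e^{2\varphi_\xi}=e^{-2(\varphi-\varphi_\xi)}$ does belong to $\mathcal{E}$: Lemma \ref{Lfi} gives $\Delta(\varphi-\varphi_\xi)\asymp\Delta\varphi$, so the associated function is $\asymp\tau$ and condition (E) transfers up to constants. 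Proposition \ref{dbar-C} applied to this weight with exponent $\alpha$ produces a solution $v$ of $\overline\partial v=f\overline\partial\chi$ satisfying
\[\int_\D|v|^2\omega_* e^{2\varphi_\xi}\,dA\lesssim\int_\D|f\overline\partial\chi|^2\omega_* e^{2\varphi_\xi}\tau^2\,dA\lesssim 1,\]
the final step using that the integrand is supported in $D(\delta\tau(\xi))$ where $\varphi_\xi=O(1)$. Transferring the twisted bound from $v$ to the minimal-norm solution $u$, via the decomposition $u=v-P_{\omega_*}v$, is the delicate point and requires a boundedness statement for $P_{\omega_*}$ on $L^2(\omega_* e^{2\varphi_\xi})$.

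Granted the twisted bound on $u$, one notes that $u$ is holomorphic on $D(\delta\tau(z))$ (disjoint from the support of $\overline\partial u$) and that $e^{2\varphi_\xi}$ is essentially constant on this disc by the bound on $|\partial\varphi_\xi|$ in Lemma \ref{Lfi}. Applying Lemma \ref{SH-C} to $u$ and multiplying by $e^{2\varphi_\xi(z)}$ yields
\[|u(z)|^2\omega_*(z)e^{2\varphi_\xi(z)}\lesssim\tau(z)^{-2}.\]
Since $e^{-2\varphi_\xi(z)}\lesssim(\tau(\xi)/|z-\xi|)^M$, combining with the duality estimate delivers the bound with $\tau(\xi)$ in the numerator; interchanging the roles of $z$ and $\xi$ at the outset yields the full symmetric estimate with $\min(\tau(z),\tau(\xi))$.
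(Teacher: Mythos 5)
Your setup (near-diagonal case, cutoff, duality, reduction to bounding the minimal-norm solution $u$ of $\overline\partial u=f\overline\partial\chi$ in the twisted norm $\int|u|^2\omega_*e^{2\varphi_\xi}\,dA$) matches the paper, and replacing Lemma \ref{subHarmP} by Lemma \ref{SH-C} and the H\"ormander--Berndtsson input by Proposition \ref{dbar-C} is the right instinct. But the step you yourself label ``the delicate point'' is a genuine gap, not a routine verification. Proposition \ref{dbar-C} applied to the weight $e^{-2(\varphi-\varphi_\xi)}$ gives you \emph{some} solution $v$ with $\int|v|^2\omega_*e^{2\varphi_\xi}\,dA\lesssim1$, but the quantity you must control is $|u(z)|$ for the \emph{specific} solution $u=f\chi-P_{\omega_*}(f\chi)$, since only for that one do you have $|u(z)|=|P_{\omega_*}(f\chi)(z)|$. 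Passing from $v$ to $u$ via $u=v-P_{\omega_*}v$ requires $P_{\omega_*}$ to be bounded on $L^2(\omega_*e^{2\varphi_\xi})$ uniformly in $\xi$, and no such statement is available at this stage: proving it would essentially require the off-diagonal decay of $K^*_z$ that the lemma is trying to establish, so the argument as proposed is circular (or at best leaves its key estimate unproved).

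The paper closes this gap with Berndtsson's minimality trick rather than a projection bound. Since $u$ is the minimal solution in $L^2(\omega_*)$, it is orthogonal to $A^2(\omega_*)$, which is the same as saying that $u_0:=u\,e^{2\varphi_\xi}$ is orthogonal to holomorphic functions in $L^2(\omega_*e^{-2\varphi_\xi})$; hence $u_0$ is itself the \emph{minimal} solution in $L^2(\omega_*e^{-2\varphi_\xi})$ of $\overline\partial v=\overline\partial(u\,e^{2\varphi_\xi})$. Applying Proposition \ref{dbar-C} with the weight $\omega\,e^{-2\varphi_\xi}$ (which lies in $\mathcal{E}$ because $\Delta\varphi\le\Delta(\varphi+\varphi_\xi)\le2\Delta\varphi$) produces some solution of that equation with the good bound, and minimality alone --- no projection needed --- transfers the bound to $u_0$. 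Unwinding, this gives
\begin{displaymath}
\int_{\D}|u|^2\,\omega_*\,e^{2\varphi_\xi}\,dA\;\lesssim\;\int_{\D}\big|\overline\partial u+2u\,\overline\partial\varphi_\xi\big|^2\,\omega_*\,e^{2\varphi_\xi}\,\tau^2\,dA,
\end{displaymath}
and the term containing $u\,\overline\partial\varphi_\xi$ is absorbed into the left-hand side using $|\overline\partial\varphi_\xi|^2\le\varepsilon\,\Delta\varphi\asymp\varepsilon\,\tau^{-2}$ with $\varepsilon$ chosen so small that $C\varepsilon\le1/2$ --- which is why the paper keeps $\varepsilon$ free in Lemma \ref{Lfi} rather than fixing $\varepsilon=1/2$ at the outset as you do. The remaining term is exactly your $\int|f\overline\partial\chi|^2\,\omega_*\,e^{2\varphi_\xi}\,\tau^2\,dA\lesssim1$, and from that point the rest of your argument goes through.
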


\begin{proof}
Let $ z,\xi \in\mathbb{D}$ and fix $0<\delta<m_{\tau}$. The result is clear if $D(\delta\tau(z))\cap D(\delta\tau(\xi)) \neq \emptyset ,$ so that we assume $D(\delta\tau(z))\cap D(\delta\tau(\xi)) = \emptyset. $
Let $0 \leq \chi \leq 1 $ be a function in $C^\infty (\D)$ with compact support in the disk $  D(\delta\tau(\xi))$  such that  $\chi \equiv 1$  in   $D(\frac{\delta}{2}\, \tau(\xi))$
and  $|\overline\partial\chi|^2\lesssim \displaystyle\frac{\chi}{\tau(\xi)^2}.$  By Lemma \ref{SH-C} we obtain
\begin{equation}\label{KRN-A}
\begin{split}
|K^*_z(\xi)|^2 \omega_ *(\xi)&\lesssim\frac{1}{\tau(\xi)^2}\,\big \|K^*_z \big \|^2_{L^2(\D,\,\chi \omega_ * dA)}.
\end{split}
\end{equation}
 By duality,
$\|K^*_z\|_{_{L^2(\D,\chi\omega_ * )}} = \sup_{\substack{f}}|\langle f,K^*_z\rangle_{L^2(\D,\chi\omega_ * dA)}|,$  where the supremum runs over all holomorphic functions $f$ on $D(\delta\tau(\xi))$ such that
\begin{equation}\label{Eq-f}
 \int_{D(\delta\tau(\xi))}{|f(z)|^2\,\omega_ *(z)\,dA(z)}= 1.
\end{equation}
As  $ f\chi\in L^2(\D,\omega_ *\,  dA)$ one has $ \langle f,K^*_z\rangle_{L^2(\D,\chi\omega_ * dA)} = P_{\omega_ *}(f\chi)(z),$
where $P_{\omega_ *}$ is the orthogonal Bergman projection
 which is obviously bounded from $L^2(\D,\omega_ *\, dA)$   to  $A^2(\omega_ *).$ Now we consider $ u = f\chi - P_{\omega_ *}(f\chi)$ the solution with minimal norm in  $L^2(\D,\omega_ *\, dA)$ of the equation
\begin{equation}\label{dbar-eqA}
\overline\partial{u} = \overline\partial ({f\chi}) = f\overline\partial\chi.
\end{equation}
Since $\chi(z) = 0,$  we get
 $|\langle f,K^*_z\rangle_{L^2(\D,\chi\omega_ *)}|=| P_{\omega_ *}(f\chi)(z)| = |u(z)|.$
For a given $0<\varepsilon <1/2$,  consider the subharmonic function from Lemma \ref{Lfi} given by
\begin{displaymath}
 \varphi_ {\xi}(s) = \frac{M}{4} \log \left (1+\frac{|s-\xi|^2}{\beta ^2 \tau(\xi)^2} \right ),
\end{displaymath}
with $\beta>0$ (depending on $M$ and $\varepsilon$) taken big enough so that
\begin{equation}\label{DELTAA3}
\big |\overline{\partial} \varphi_ {\xi} (s)\big |^2 \le \varepsilon \,\Delta \varphi(s),\quad \textrm{ and }\quad \Delta \varphi_ {\xi}(s)\le \varepsilon \, \Delta \varphi (s).
\end{equation}
 Thus  $\Delta \varphi \le \Delta (\varphi+\varphi_{\xi}) \le 2 \Delta \varphi$ and $\frac{1}{2}\,\Delta \varphi \le \Delta (\varphi-\varphi_{\xi}) \le  \Delta \varphi$.
Next, we are going to apply the method used in the proof of Berndtsson's theorem. Since $u$ is the solution with minimal norm in  $L^2(\D,\omega_ *\, dA)$ of the equation \eqref{dbar-eqA}, it satisfies $\langle u,h \rangle _{\omega_ *}=0$
for any square integrable holomorphic function $h$ in $\D$. This clearly implies that
\[\int_{\D} u_ 0 \,\overline{h} \,\omega_ *\,e^{-2\varphi_{\xi}} dA=0\]
for any such $h$, with $u_ 0 =u \,e^{2\varphi_{\xi}}$. Thus $u_ 0$ is the minimal solution in $L^2(\D,\omega_ *\,e^{-2\varphi_{\xi}} dA)$ of the equation $\overline{\partial}v=\overline{\partial}(u \,e^{2\varphi_{\xi}}):=g$.
By Proposition \ref{dbar-C} applied with the weight $\omega _{\xi}=\omega e^{-2\varphi_{\xi}}$, we can find a solution $v$ of the equation $\overline{\partial}v=g$ satisfying
\[ \int_{\D}|v|^2\,\omega_ *\, e^{-2\varphi_{\xi}}\, dA \leq  C \int_{\D}|g|^2\, \omega_ *\, e^{-2\varphi_{\xi} }\,\tau^2\,dA .\]
Hence the same estimate is true for the minimal solution $u_ 0$, which implies
\begin{equation*}
\begin{split}
\int_{\D} |u|^2 \,\omega_ *\, e^{2\varphi_{\xi}} dA &\le C \int_{\D}\big |\overline{\partial} u+u \,\overline{\partial}\varphi_{\xi}\big |^2\,\omega_ *\, e^{2\varphi_{\xi}}\,\tau^2\,  dA
\\
&\le C \int_{\D}\big |\overline{\partial} u\big |^2\,\omega_ *\, e^{2\varphi_{\xi}}\,\tau^2\,  dA
+ C \int_{\D}\big |u\,\overline{\partial}\varphi_{\xi}\big |^2\,\omega_ *\, e^{2\varphi_{\xi}}\,\tau^2\,  dA.
\end{split}
\end{equation*}
Now use \eqref{DELTAA3} with $\varepsilon >0$ taken so that $C\varepsilon \le 1/2$,  and absorb the last member of the right hand side in the left hand side. The result is
\begin{equation}\label{Eq-BT1}
\int_{\D} |u|^2 \,\omega_ *\, e^{2\varphi_{\xi}} dA \le C \int_{\D}\big |\overline{\partial} u\big |^2\,\omega_ *\, e^{2\varphi_{\xi}}\,\tau^2\,  dA.
\end{equation}
Arguing as in the proof of Lemma \ref{SH-C}, then applying \eqref{Eq-BT1}, we obtain
\begin{equation*}
\begin{split}
|u(z)|^2 \omega_{*}(z)\,e^{2\varphi_{\xi}(z)} &\lesssim  \frac{1}{\tau(z)^2}\int_{\D}|u(s)|^2 \,\omega_* (s) \,e^{2\varphi_{\xi}(s)}\,dA(s)
\\
& \lesssim  \frac{1}{\tau(z)^2}\int_{\D}|f(s)|^2 \,|\overline\partial\chi(s)|^2\,\omega_* (s) \,e^{2\varphi_{\xi}(s)}\,\tau(s)^2\,dA(s)
\\
& \lesssim  \frac{1}{\tau(z)^2}\int_{D(\delta \tau(\xi))}|f(s)|^2 \,\omega_* (s) \,e^{2\varphi_{\xi}(s)}\,dA(s).
\end{split}
\end{equation*}
Since the function  $\varphi_{\xi}$ is bounded in $D(\delta\tau (\xi))$, this and \eqref{Eq-f} yields
\[ |u(z)|^2 \omega_{*}(z)\,e^{2\varphi_{\xi}(z)} \tau(z)^2 \le C.\]
Thus, taking into account \eqref{KRN-A}, we get
 \[|K^*_z(\xi)|\lesssim \beta ^M\,\frac{1}{\tau(\xi)}\,\frac{1}{\tau(z)} \, \omega_ *(\xi)^{-1/2} \,\omega_ *(z)^{-1/2}\,\left(\frac{\tau(\xi)}{|z-\xi|}\right)^{M}.\]
Finally, interchanging the roles of $z$ and $\xi$ we  obtain the desired result.
\end{proof}

\begin{coro}\label{RKE-A}
Let $\omega\in\mathcal{E}$, and  $K^*_z $ be be the reproducing kernel for $A^2(\omega_ *),$ where $\omega_ *$ is the associated weight given by \eqref{Eq-AW}. For  $\beta\in \mathbb{R}$,
there exists a constant $ C > 0 $  such that
\[\int_{\D}{|K^*_z(\xi)|\, \omega_ *(\xi)^{1/2}\,\tau(\xi)^{\beta}  \,dA(\xi)} \leq C \,\omega_ *(z)^{-1/2}\,\tau(z)^{\beta}. \]
\end{coro}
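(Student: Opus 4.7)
The plan is to mimic the proof of Lemma \ref{kernelestimat}, splitting the integral into a near-diagonal part
\[A(z) = \int_{|z-\xi|\le \delta_0 \tau(z)} |K^*_z(\xi)|\,\omega_*(\xi)^{1/2}\tau(\xi)^{\beta}\,dA(\xi)\]
and a far-diagonal part
\[B(z) = \int_{|z-\xi|>\delta_0 \tau(z)} |K^*_z(\xi)|\,\omega_*(\xi)^{1/2}\tau(\xi)^{\beta}\,dA(\xi),\]
for some fixed $0<\delta_0\le m_\tau$, and bounding each by a constant times $\tau(z)^\beta\omega_*(z)^{-1/2}$.

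For $A(z)$, I would use the reproducing property to obtain the Cauchy--Schwarz bound $|K^*_z(\xi)|\le \|K^*_z\|_{A^2(\omega_*)}\|K^*_\xi\|_{A^2(\omega_*)}$, together with the upper estimate $\|K^*_w\|_{A^2(\omega_*)}^2\,\omega_*(w)\lesssim \tau(w)^{-2}$ noted right after Lemma \ref{SH-C}. Combining this with $\tau(\xi)\asymp \tau(z)$ from \eqref{asymptau} on $D(\delta_0\tau(z))$, the $\omega_*(\xi)^{1/2}$ factor cancels the corresponding negative power from $\|K^*_\xi\|$, and one is left with $\int_{D(\delta_0\tau(z))}\tau(\xi)^{\beta-1}\,dA(\xi)\asymp \tau(z)^{\beta+1}$, which yields $A(z)\lesssim \tau(z)^{\beta}\omega_*(z)^{-1/2}$.

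For $B(z)$, I would insert the pointwise estimate of Lemma \ref{PE-C} (with a parameter $M\ge 1$ to be chosen large) and then decompose the region $\{|z-\xi|>\delta_0\tau(z)\}$ into the dyadic annuli
\[R_k(z) = \bigl\{\xi\in\D : 2^{k}\delta_0\tau(z) < |z-\xi| \le 2^{k+1}\delta_0\tau(z)\bigr\},\qquad k=0,1,2,\ldots,\]
exactly as in the proof of Lemma \ref{kernelestimat}, so that $|z-\xi|\asymp 2^k\tau(z)$ and $\mathrm{Area}(R_k(z))\asymp 2^{2k}\tau(z)^2$. Condition (B) in the definition of $\mathcal{L}$ gives $\tau(\xi)\lesssim 2^k\tau(z)$ on $R_k(z)$, which is the key Lipschitz control on the weight $\tau$.

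The main technical step is to bound $\tau(\xi)^{\beta-1}\bigl(\min(\tau(z),\tau(\xi))\bigr)^M$ on $R_k(z)$. When $\beta\ge 1$ this is straightforward from $\tau(\xi)\lesssim 2^k\tau(z)$ and $\min\le \tau(z)$. When $\beta<1$, the factor $\tau(\xi)^{\beta-1}$ cannot be controlled from above directly (since $\tau(\xi)$ admits no uniform lower bound), and this is where I expect the main obstacle. The remedy is to split the subcases $\tau(\xi)\ge \tau(z)$ and $\tau(\xi)<\tau(z)$: in the first $\min=\tau(z)$ and $\tau(\xi)^{\beta-1}\le\tau(z)^{\beta-1}$; in the second, $\min=\tau(\xi)$, and choosing $M\ge 1-\beta$ so that $\beta-1+M\ge 0$ allows us to use $\tau(\xi)<\tau(z)$ to get $\tau(\xi)^{\beta-1+M}\le\tau(z)^{\beta-1+M}$. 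Both subcases produce the uniform bound $\tau(z)^{M+\beta-1}$. Combining everything, for $M$ chosen large enough (larger than $\max(2,\beta+1,1-\beta)$), the sum
\[\sum_{k\ge 0}(2^k\tau(z))^{-M}\cdot\tau(z)^{M+\beta-1}\cdot 2^{2k}\tau(z)^2 = \tau(z)^{\beta+1}\sum_{k\ge 0}2^{k(2-M)}\]
converges, giving $B(z)\lesssim \tau(z)^{\beta}\omega_*(z)^{-1/2}$ and completing the proof.
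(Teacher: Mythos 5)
Your proposal is correct and follows essentially the same route as the paper, which simply invokes the method of Lemma \ref{kernelestimat} with the pointwise bound of Lemma \ref{PE-C} and $M$ taken large enough, noting the use of $\tau(\xi)\lesssim 2^{k}\tau(z)$ on the dyadic annuli when $\beta-1>0$. Your additional case analysis for $\beta<1$, absorbing the negative power of $\tau(\xi)$ into $\bigl(\min(\tau(z),\tau(\xi))\bigr)^{M}$ by requiring $M\ge 1-\beta$, is exactly the detail the paper leaves implicit in ``$M$ taken big enough.''
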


\begin{proof}
Apart from the extra factor $\tau(z)^{\beta}$, this is almost the analogue of Lemma \ref{kernelestimat}. For the proof, just use the same method applying Lemma \ref{PE-C} with $M$ taken big enough, but in the case $\beta-1>0$, use that $\tau(\xi)\lesssim 2^k \tau(z)$ for $z\in R_ k(z)$.
\end{proof}

Arguing in the same way as in the proof of the boundedness of the Bergman projection, using Corollary \ref{RKE-A} with $\beta=0$, we can prove that $P_{\omega_ *}$ is bounded on $L^p(\omega_ *^{p/2})$, but in order to obtain the reproducing formula, what is really needed is the following result.

\begin{lemma}\label{BP-A}
Let $\omega \in \mathcal{E}$, $1\le p< \infty$ and let $\omega_ *$ be the associated weight  given by \eqref{Eq-AW}. Then $P_{\omega_ *}:L^p(\omega^{p/2})\rightarrow A^p(\omega^{p/2})$ is bounded.
\end{lemma}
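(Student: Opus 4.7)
The strategy is to imitate the proof of Theorem \ref{projbd} verbatim, with $K_z^*$ replacing $K_z$ and Corollary \ref{RKE-A} replacing Lemma \ref{kernelestimat}. The new element that makes the scheme go through is the free parameter $\beta$ in the exponent of $\tau$ in Corollary \ref{RKE-A}: we will pick $\beta$ differently in two different places so that all the $\tau^\alpha$ and $\omega$ powers arising from the mismatch between the natural kernel weight $\omega_*$ and the target weight $\omega^{p/2}$ cancel out cleanly at the end.

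For $p=1$, Fubini's theorem reduces the problem to estimating the inner integral $\int_{\D}|K_\xi^*(z)|\,\omega(z)^{1/2}\,dA(z)$. Writing $\omega(z)^{1/2}=\omega_*(z)^{1/2}\,\tau(z)^{-\alpha/2}$ and applying Corollary \ref{RKE-A} with $\beta=-\alpha/2$ produces the bound $C\,\omega_*(\xi)^{-1/2}\tau(\xi)^{-\alpha/2}$. Multiplying by the remaining factor $\omega_*(\xi)$ gives exactly $C\,\omega(\xi)^{1/2}$, which completes this case.

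For $1<p<\infty$ I would split $\omega_*(\xi)=\omega_*(\xi)^{1-1/(2p')}\cdot\omega_*(\xi)^{1/(2p')}$ and apply H\"older's inequality inside the integral defining $P_{\omega_*}f(z)$ to obtain
\[
|P_{\omega_*}f(z)|^p\le \left(\int_{\D}|f(\xi)|^p|K_z^*(\xi)|\,\omega_*(\xi)^{(p+1)/2}\,dA(\xi)\right)\left(\int_{\D}|K_z^*(\xi)|\,\omega_*(\xi)^{1/2}\,dA(\xi)\right)^{p-1}.
\]
Corollary \ref{RKE-A} with $\beta=0$ controls the second factor by $C\,\omega_*(z)^{-(p-1)/2}$. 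Multiplying by $\omega(z)^{p/2}$, integrating in $z$ and applying Fubini reduces matters to the inner integral
\[
J(\xi)=\int_{\D}|K_\xi^*(z)|\,\omega_*(z)^{-(p-1)/2}\omega(z)^{p/2}\,dA(z)=\int_{\D}|K_\xi^*(z)|\,\omega_*(z)^{1/2}\,\tau(z)^{-\alpha p/2}\,dA(z),
\]
where the identity uses $\omega^{p/2}=\omega_*^{p/2}\tau^{-\alpha p/2}$. A second application of Corollary \ref{RKE-A}, now with $\beta=-\alpha p/2$, yields $J(\xi)\le C\,\omega_*(\xi)^{-1/2}\tau(\xi)^{-\alpha p/2}$. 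Combined with the leftover factor $\omega_*(\xi)^{(p+1)/2}$, the $\omega_*$ powers collapse to $\omega_*^{p/2}$ and the resulting $\tau^{-\alpha p/2}$ cancels precisely the $\tau^{\alpha p/2}$ coming from $\omega_*^{p/2}=\omega^{p/2}\tau^{\alpha p/2}$, recovering $\omega(\xi)^{p/2}$ as required.

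There is no genuine obstacle in this plan: essentially no new estimate has to be proved, since everything is already contained in Corollary \ref{RKE-A}. The only mildly delicate point is to select the two values of $\beta$ correctly so that all powers of $\tau$ and $\omega$ collapse to $\omega^{p/2}$ at the end; these choices are forced by the bookkeeping, which is why the scheme of Theorem \ref{projbd} adapts so smoothly.
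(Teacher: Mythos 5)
Your proposal is correct and is exactly the argument the paper intends: the paper's proof of Lemma \ref{BP-A} is a one-line remark deferring to the method of Theorem \ref{projbd} with Corollary \ref{RKE-A} in place of Lemma \ref{kernelestimat}, and you have carried out that plan with the right choices of the parameter $\beta$ (namely $-\alpha/2$ for $p=1$, and $0$ and $-\alpha p/2$ for the two applications when $1<p<\infty$), all of which check out.
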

\begin{proof}
This is proved with the same method used in the proof of Theorem \ref{projbd}, but using Corollary \ref{RKE-A} instead of Lemma \ref{kernelestimat}. We left the details to the interested reader.
\end{proof}

\subsection{The reproducing formula}

With all the machinery developed in the previous subsection, we can prove the following key result from which the reproducing formula will follow.

\begin{lemma}\label{Lem-D}
Let $\omega \in \mathcal{E}$ and $f\in A^1(\omega ^{1/2})$. Then one can find
 functions $f_ n\in A^2(\omega)$ with $\|f_ n\|_{A^1(\omega ^{1/2})}\lesssim \|f\|_{A^1(\omega ^{1/2})}$ such that $f_ n \rightarrow f$ uniformly on compact subsets of $\D$.
\end{lemma}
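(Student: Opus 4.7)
The plan is to use the associated weighted Bergman space $A^{2}(\omega_{*})$ with $\omega_{*}(z)=\omega(z)\tau(z)^{2}$, into which $A^{1}(\omega^{1/2})$ embeds continuously, so that the Hilbert-space reproducing formula $f=P_{\omega_{*}}f$ is automatically available. I would then approximate $f$ by truncating this reproducing integral to a compact exhaustion of $\D$.

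First I would establish the continuous inclusion
\[
A^{1}(\omega^{1/2})\hookrightarrow A^{2}(\omega_{*}),\qquad \|f\|_{A^{2}(\omega_{*})}^{2}\lesssim \|f\|_{A^{1}(\omega^{1/2})}^{2}.
\]
The input here is Lemma \ref{subHarmP} with $p=1$ and $\beta=1/2$, which yields the pointwise bound $|f(z)|\,\omega(z)^{1/2}\lesssim \tau(z)^{-2}\|f\|_{A^{1}(\omega^{1/2})}$. Factoring $|f|^{2}\omega\tau^{2}=(|f|\omega^{1/2})(|f|\omega^{1/2}\tau^{2})$ and plugging the pointwise bound into the second factor reduces the $A^{2}(\omega_{*})$-integral to a multiple of the $A^{1}(\omega^{1/2})$-norm of $f$.

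Next, taking $\chi_{n}$ to be the characteristic function of $\{|\xi|\le 1-1/n\}$ (any smooth cutoff exhausting $\D$ works equally well), I would set
\[
f_{n}(z)=P_{\omega_{*}}(f\chi_{n})(z)=\int_{\D}f(\xi)\chi_{n}(\xi)\overline{K_{z}^{*}(\xi)}\,\omega_{*}(\xi)\,dA(\xi).
\]
Since $f\chi_{n}$ is bounded with compact support in $\D$, it lies in $L^{2}(\omega)$, and Lemma \ref{BP-A} with $p=2$ gives $f_{n}\in A^{2}(\omega)$ as required. The norm control
\[
\|f_{n}\|_{A^{1}(\omega^{1/2})}=\|P_{\omega_{*}}(f\chi_{n})\|_{A^{1}(\omega^{1/2})}\lesssim \|f\chi_{n}\|_{L^{1}(\omega^{1/2})}\le \|f\|_{A^{1}(\omega^{1/2})}
\]
is then an immediate application of Lemma \ref{BP-A} with $p=1$.

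Finally, for uniform convergence on compact subsets, the embedding from the first step places $f$ in $A^{2}(\omega_{*})$; since $|f\chi_{n}|\le |f|\in L^{2}(\omega_{*})$, dominated convergence gives $f\chi_{n}\to f$ in $L^{2}(\omega_{*})$, hence $f_{n}=P_{\omega_{*}}(f\chi_{n})\to P_{\omega_{*}}f=f$ in $A^{2}(\omega_{*})$. Bounded point evaluations then give pointwise convergence $f_{n}(z)\to f(z)$ for every $z\in\D$. Combining the uniform $A^{1}(\omega^{1/2})$-bound for $\{f_{n}\}$ with Lemma \ref{subHarmP} supplies a local uniform bound for the family, so $\{f_{n}\}$ is normal and the pointwise convergence upgrades to uniform convergence on compact subsets via a Vitali-type argument. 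The main obstacle I anticipate is the embedding $A^{1}(\omega^{1/2})\hookrightarrow A^{2}(\omega_{*})$ in the first step; once this is in place, the argument reduces to a routine truncation built on Lemma \ref{BP-A}.
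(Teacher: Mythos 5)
Your proposal is correct, and while it uses the same construction as the paper -- the approximants $f_n=P_{\omega_*}(f\chi_n)$ with $\omega_*=\omega\tau^2$, membership in $A^2(\omega)$ via Lemma \ref{BP-A} with $p=2$, and the uniform norm bound via Lemma \ref{BP-A} with $p=1$ -- it handles the convergence step by a genuinely different and more economical route. The paper writes $f\chi_n-f_n=u_n$ as the minimal-norm solution in $L^2(\omega_*)$ of $\overline{\partial}v=f\overline{\partial}\chi_n$, invokes Proposition \ref{dbar-C} to bound $\int_\D|u_n|^2\omega_*\,dA$ by an integral of $|f\overline{\partial}\chi_n|^2\omega_*\tau^2$ supported on the annulus $r_n<|z|<1$, and shows this tail tends to zero using $\tau\lesssim 1/n$ there together with the pointwise bound $|f|\omega^{1/2}\tau^2\lesssim\|f\|_{A^1(\omega^{1/2})}$; this forces the cutoffs to be smooth. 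You instead observe the continuous embedding $A^1(\omega^{1/2})\hookrightarrow A^2(\omega_*)$ (your factorization of $|f|^2\omega\tau^2$ using the same pointwise bound is exactly right), after which $f\chi_n\to f$ in $L^2(\omega_*)$ by dominated convergence, $f_n\to f$ in $A^2(\omega_*)$ by the trivial contractivity of the orthogonal projection, and local uniform convergence follows from the locally uniform boundedness of point evaluations on $A^2(\omega_*)$ supplied by Lemma \ref{SH-C} -- in fact this last estimate already gives $\sup_K|f_n-f|\lesssim_K\|f_n-f\|_{A^2(\omega_*)}$ directly, so your final appeal to normality and a Vitali-type argument is not even needed. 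Your version avoids any further use of the $\overline{\partial}$-machinery in the convergence step (it still enters through Lemma \ref{BP-A}, which both arguments need for the norm bound) and permits non-smooth cutoffs; the paper's version gives an explicit quantitative decay of $\|f\chi_n-f_n\|_{L^2(\omega_*)}$ in terms of the tail mass of $|f|\omega^{1/2}$, which is not required for the statement.
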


\begin{proof}
Our proof has his roots in an argument used by Lindh\"{o}lm  \cite{Lind} in the setting of weighted Fock spaces. Let $r_ n:=1-1/n$, and consider a sequence of $C^{\infty}$ functions $\chi_ n$ with compact support on $\D$ such that $\chi_ n(z)=1$ for $|z|\le 1-1/n$, and $|\overline{\partial}\chi_ n |\lesssim \, n.$
For each $n$, consider the analytic functions
\[f_ n =P_{\omega_ *}(f\chi_ n),\]
where $\omega_ *$ is the associated weight given by
\[\omega_ *(z)=\omega(z)\,\tau(z)^{2}.\]
Since the functions $f\chi_ n \in L^2(\omega)$ and $P_{\omega _ *}$ is bounded on $L^p(\omega ^{p/2})$, $1\le p<\infty$, then $f_ n \in A^2(\omega)$, and
\[\|f_ n\|_{A^1(\omega ^{1/2})}=\|P_{\omega_ *}(f\chi_ n)\|_{A^1(\omega ^{1/2})}\lesssim \|f\chi_ n\|_{L^1(\omega ^{1/2})}\le \|f\|_{A^1(\omega ^{1/2})}.\]
Therefore, it remains to show that $f_ n\rightarrow f$ uniformly on compact subsets of $\D$. Since
$
|f-f_ n|\le |f-f\chi _ n |+|f\chi_ n-f_ n|,
$
and, clearly, $f\chi _ n \rightarrow f$ uniformly on compact subsets of $\D$, it is enough to show that $u_ n\rightarrow 0$ uniformly on compact subsets of $\D$, with $u_ n=f\chi_ n-P_{\omega_ *}(f\chi_ n)$.

Fix $0<R<1$ and let $z\in \D$ with $|z|\le R$. For $n$ big enough, the function $u_ n$ is analytic in a neighborhood of the disc $D(\delta_ 0 \tau(z))$, with $\delta_ 0\in (0,m_{\tau})$. Hence, by Lemma \ref{subHarmP},
\begin{equation}\label{Eq-u0}
\begin{split}
\tau(z)^4 \,|u_ n(z)|^2\,e^{-2\varphi(z)} & \lesssim \tau(z)^2 \int_{D(\delta_ 0 \tau(z))} |u_ n(\zeta)|^2\,e^{-2\varphi(\zeta)} dA(\zeta)
\\
& \lesssim  \int_{D(\delta_ 0 \tau(z))} |u_ n(\zeta)|^2\,e^{-2\varphi(\zeta)} \,\tau(\zeta)^2\, dA(\zeta)
\\
& \le   \int_{\D} |u_ n(\zeta)|^2\,\omega_ *(\zeta)\, dA(\zeta)
\end{split}
\end{equation}

Since $u_ n$ is the solution of the $\overline{\partial}$-equation $\overline{\partial}v=f\overline{\partial}\chi_ n$ with minimal $L^2(\omega_ *)$ norm, by Proposition \ref{dbar-C}, we have
\begin{displaymath}
\int_{\D} |u_ n|^2\,\omega_ *\, dA \le C \int_{\D}  |f\overline{\partial}\chi_ n|^2 \,\omega_ *\,\tau ^2\, dA.
\end{displaymath}
Since  $\overline{\partial}\chi_ n$ is supported on $r_ n<|z|<1$ with $|\overline{\partial}\chi_ n|\lesssim n$, we get
\begin{displaymath}
\begin{split}
\int_{\D} |u_ n|^2\,\omega_ * \, dA
&\le C n^2 \int_{|\zeta|> r_ n}  |f(\zeta)|^2 \,\omega(\zeta)\,\tau(\zeta)^4 dA(z).
\end{split}
\end{displaymath}
Since $\tau(\zeta)\lesssim (1-|\zeta|)\le 1/n$ for $|\zeta|>r_ n$, using the pointwise estimate (a consequence of Lemma \ref{subHarmP})
\[|f(\zeta)|\,\omega(\zeta)^{1/2}\tau(\zeta)^2 \lesssim \|f\|_{A^1(\omega ^{1/2})},\]
we obtain
\begin{displaymath}
\begin{split}
\int_{\D} |u_ n|^2\,\omega_ * \,dA & \le C  \int_{|\zeta|> r_ n}  |f(\zeta)|^2 \,\omega(\zeta)\,\tau(\zeta)^2 dA(z)
\\
&\le C  \|f\|_{A^1(\omega ^{1/2})} \int_{|\zeta|> r_ n}  |f(\zeta)| \,\omega(\zeta)^{1/2}\, dA(z),
\end{split}
\end{displaymath}
and this goes to zero as $n\rightarrow \infty$ since $f\in A^1(\omega ^{1/2})$. Bearing in mind \eqref{Eq-u0}, this implies that $u_ n\rightarrow 0$ uniformly on compact subsets of $\D$, finishing the proof.
\end{proof}

Now we are ready to show that, for weights $\omega\in \mathcal{E}$, the reproducing formula $f=P_{\omega} f$ holds for any $f\in A^1(\omega ^{1/2})$. In view of Corollary \ref{C-RF} this would give the density of $A^2(\omega)$ in $A^1(\omega ^{1/2})$.

\begin{theorem}\label{RF-v1}
Let $\omega \in \mathcal{E}$. Then $f=P_{\omega}f$ for each $f\in A^1(\omega ^{1/2})$.
\end{theorem}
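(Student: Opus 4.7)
My plan is to derive the reproducing formula by passing to the limit in the reproducing identity already valid on $A^2(\omega)$, with the approximating sequence provided by Lemma \ref{Lem-D}. Specifically, given $f\in A^1(\omega^{1/2})$, pick $f_ n\in A^2(\omega)$ with $\|f_ n\|_{A^1(\omega^{1/2})}\lesssim \|f\|_{A^1(\omega^{1/2})}$ and $f_ n\to f$ uniformly on compact subsets of $\D$. Because each $f_ n$ belongs to $A^2(\omega)$, the standard Hilbert-space reproducing property gives
\[f_ n(z)=\int_{\D}f_ n(\xi)\,\overline{K_ z(\xi)}\,\omega(\xi)\,dA(\xi),\qquad z\in\D.\]
Fix $z\in\D$; the left-hand side tends to $f(z)$ by uniform convergence on compacta, so the whole problem reduces to showing that the right-hand side tends to $P_{\omega}f(z)$ as $n\to\infty$.

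I would justify this passage to the limit via dominated convergence, and the decisive step is to produce an integrable majorant for $|f_ n(\xi)K_ z(\xi)|\omega(\xi)$ that is independent of $n$. The uniform $A^1(\omega^{1/2})$-bound on $\{f_ n\}$ translates, by the sub-mean value property (Lemma \ref{subHarmP} applied with $p=1$ and $\beta=1/2$), into the pointwise estimate
\[|f_ n(\xi)|\,\omega(\xi)^{1/2}\lesssim \|f\|_{A^1(\omega^{1/2})}\,\tau(\xi)^{-2},\qquad \xi\in\D.\]
Multiplying by $|K_ z(\xi)|\,\omega(\xi)^{1/2}$ dominates the integrand by a constant multiple of $|K_ z(\xi)|\,\omega(\xi)^{1/2}\,\tau(\xi)^{-2}$. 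This dominating function is $L^1(dA)$ by Corollary \ref{RKE-A} applied with the original weight (i.e. $\omega_ *=\omega$, so that $K^*_ z=K_ z$) and exponent $\beta=-2$, which yields
\[\int_{\D}|K_ z(\xi)|\,\omega(\xi)^{1/2}\,\tau(\xi)^{-2}\,dA(\xi)\lesssim \omega(z)^{-1/2}\,\tau(z)^{-2}<\infty\]
for each fixed $z\in\D$. Together with the pointwise convergence $f_ n(\xi)\to f(\xi)$ coming from uniform convergence on compacta, the dominated convergence theorem then delivers
\[\int_{\D}f_ n(\xi)\,\overline{K_ z(\xi)}\,\omega(\xi)\,dA(\xi)\longrightarrow P_{\omega}f(z),\]
and comparing with $f_ n(z)\to f(z)$ closes the identity $f(z)=P_{\omega}f(z)$.

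The main obstacle is precisely building this dominating function: Lemma \ref{Lem-D} only furnishes uniform convergence on compacta and a norm bound, and one cannot hope directly for norm-convergence $\|f_ n-f\|_{A^1(\omega^{1/2})}\to 0$ at this stage, so a bare monotone or norm-continuity argument is unavailable. The two ingredients that unlock the plan are Lemma \ref{subHarmP}, which promotes an $A^1$-bound to a pointwise bound carrying the correct factor $\tau(\xi)^{-2}$, and Corollary \ref{RKE-A}, which controls the weighted integral of $|K_ z|\,\omega^{1/2}$ against the compensating factor $\tau^{-2}$. Once the reproducing formula is in hand, Corollary \ref{C-RF} immediately upgrades it to the density of $A^2(\omega)$ in $A^1(\omega^{1/2})$ as a by-product.
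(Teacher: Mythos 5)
Your argument is correct. It shares the paper's skeleton --- approximate $f$ by the functions $f_n\in A^2(\omega)$ of Lemma \ref{Lem-D}, use the Hilbert-space identity $f_n=P_{\omega}f_n$, and pass to the limit in the integral --- but the limit interchange, which is the only delicate step, is carried out differently. The paper estimates $|P_{\omega}(f_n-f)(z)|$ by hand: it splits the integral over $D(\delta\tau(z))$ and its complement, uses Theorem \ref{RK-PE} with $M=3$ to get $|K_z(\xi)|\lesssim \|K_z\|_{A^2(\omega)}\,\omega(\xi)^{-1/2}\tau(\xi)^2\tau(z)^{-3}$ off the diagonal, and then splits again into $\{|\xi|\le R\}$ (handled by uniform convergence on compacta) and the annulus $\{R<|\xi|<1\}$ (made uniformly small via the pointwise bound $|f_n-f|\,\omega^{1/2}\tau^2\lesssim\|f\|_{A^1(\omega^{1/2})}$ and the smallness of the annulus area). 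You instead compress all of this into a single application of dominated convergence, with the $n$-independent majorant $\|f\|_{A^1(\omega^{1/2})}\,|K_z(\xi)|\,\omega(\xi)^{1/2}\tau(\xi)^{-2}$ obtained from Lemma \ref{subHarmP}; its integrability is exactly Corollary \ref{RKE-A} with $\alpha=0$, $\beta=-2$ (a case the paper only sketches, but which follows from Theorem \ref{RK-PE} with $M\ge 3$ by the same annular decomposition as in Lemma \ref{kernelestimat}, since $\beta-1<0$ puts one in the easy case). Your version buys a cleaner, $\varepsilon$-free limit argument at the cost of invoking the machinery of the associated weights $\omega_*$; the paper's buys self-containedness, needing only Theorem \ref{RK-PE} and an area estimate. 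Both are sound.
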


\begin{proof}
Let $f\in A^1(\omega ^{1/2})$. By Lemma \ref{Lem-D} one can find
 functions $f_ n\in A^2(\omega)$ with $\|f_ n\|_{A^1(\omega ^{1/2})}\lesssim \|f\|_{A^1(\omega ^{1/2})}$ such that $f_ n \rightarrow f$ uniformly on compact subsets of $\D$. Then
\begin{displaymath}
|f(z)-P_{\omega} f(z)|\le |f(z)-f_ n(z)|+|f_ n(z)-P_{\omega} f(z)|.
\end{displaymath}
Clearly, the first term goes to zero as $n\rightarrow \infty$. For the second term, since $f_ n\in A^2(\omega)$, one has the reproducing formula $f_ n=P_{\omega}f_ n$, and therefore
\begin{displaymath}
|f_ n(z)-P_{\omega}f(z)|=|P_{\omega}(f_ n-f)(z)|\le \int_{\D} |f_ n(\xi)-f(\xi)|\,|K_ z(\xi)|\,\omega(\xi)\,dA(\xi).
\end{displaymath}
Fix $0<\delta<m_{\tau}$ and split the previous integral in two parts: one integrating over the disk $D(\delta \tau(z))$, and the other over $\D\setminus D(\delta \tau(z))$. We have
\begin{displaymath}
\begin{split}
 \int_{D(\delta \tau(z))} |f_ n(\xi)-f(\xi)|\,&|K_ z(\xi)|\,\omega(\xi)\,dA(\xi)
 \\
 &\lesssim \frac{\|K_ z\|_{A^2(\omega)}}{\tau(z)} \int_{D(\delta \tau(z))}\!\! |f_ n(\xi)-f(\xi)|\,\omega(\xi)^{1/2}\,dA(\xi)
\end{split}
\end{displaymath}
and this goes to zero as $n\rightarrow \infty$ since $\overline{D(\delta \tau(z))}\subset \D$ and $f_ n\rightarrow f$ uniformly on compact subsets of $\D$. On the other hand, if $\xi \notin D(\delta \tau(z))$, then we apply the estimate for the reproducing kernel obtained in Theorem \ref{RK-PE} with $M=3$ to get
\begin{displaymath}
|K_ z(\xi)| \lesssim \frac{\|K_ z\|_{A^2(\omega)} }{\omega(\xi)^{1/2}\,\tau(\xi)}\,\left (\frac{\min(\tau(z),\tau(\xi))}{|z-\xi|}\right )^M
\lesssim \frac{\|K_ z\|_{A^2(\omega)} }{\omega(\xi)^{1/2}}\,\frac{\tau(\xi)^2}{\tau(z)^3}.
\end{displaymath}
Therefore, we obtain
\begin{displaymath}
\begin{split}
 \int_{\D\setminus D(\delta \tau(z))} &|f_ n(\xi)-f(\xi)|\,|K_ z(\xi)|\,\omega(\xi)\,dA(\xi)
 \\
 &\lesssim \frac{\|K_ z\|_{A^2(\omega)}}{\tau(z)^3} \int_{\D} |f_ n(\xi)-f(\xi)|\,\omega(\xi)^{1/2}\,\tau(\xi)^2\, dA(\xi)=I_{1,n}+I_{2,n},
 \end{split}
\end{displaymath}
with
\begin{displaymath}
 I_{1,n}= \frac{\|K_ z\|_{A^2(\omega)}}{\tau(z)^3} \int_{|\xi|\le R} |f_ n(\xi)-f(\xi)|\,\omega(\xi)^{1/2}\,\tau(\xi)^2\, dA(\xi)
\end{displaymath}
and
\begin{displaymath}
I_{2,n}= \frac{\|K_ z\|_{A^2(\omega)}}{\tau(z)^3} \int_{R<|\xi|<1} |f_ n(\xi)-f(\xi)|\,\omega(\xi)^{1/2}\,\tau(\xi)^2\, dA(\xi).
\end{displaymath}
By Lemma \ref{subHarmP} and $\|f_ n\|_{A^1(\omega ^{1/2})}\lesssim \|f\|_{A^1(\omega ^{1/2})}$ it follows that
\[|f_ n(\xi)-f(\xi)|\,\omega(\xi)^{1/2}\,\tau(\xi)^2\lesssim \|f\|_{A^1(\omega ^{1/2})},\]
 and therefore
\[I_{2,n} \lesssim \frac{\|K_ z\|_{A^2(\omega)}}{\tau(z)^3}\,\|f\|_{A^1(\omega ^{1/2})} \int_{R<|\xi|<1} \!\!dA(\xi).\]
By taking $0<R<1$ enough close to $1$ we can make the last expression as small as desired. Once $R$ is taken, then $I_{1,n}\rightarrow 0$ since $f_ n$ converges to $f$ uniformly on compact subsets of $\D$. This shows that $f(z)=P_{\omega} f(z)$.
\end{proof}

\section{Complex interpolation}

An elementary introduction to the basic theory of complex interpolation, including the complex interpolation of $L^p$ spaces can be found in Chapter 2 of the book \cite{Zhu}. We assume in this section that the reader is familiar with that theory. First of all, we recall the following well-known interpolation theorem of Stein and Weiss \cite{SW}.

\begin{otherth}\label{T-SW}
Suppose that $\omega, \omega _ 0$ and $\omega _ 1$ are weight functions on $\D$. If $1\le p_ 0\le p_ 1\le \infty$ and $0\le \theta \le 1$, then
\[\big [ L^{p_ 0}(\D,\omega _ 0 dA), L^{p_ 1}(\D,\omega _ 1 dA) \big ]_{\theta}=L^p(\D,\omega dA)\]
with equal norms, where
\[\frac{1}{p}=\frac{1-\theta}{p_ 0}+\frac{\theta}{p_ 1},\qquad \omega ^{1/p}=\omega _ 0 ^{\frac{1-\theta}{p_ 0}}\,\omega _ 1 ^{\frac{\theta}{p_ 1}}.\]
\end{otherth}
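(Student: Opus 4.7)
The statement is the classical Stein--Weiss weighted interpolation theorem, which I would prove by reducing it to the unweighted complex interpolation of $L^p$ spaces via a multiplication isometry.

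\textbf{Outline of the approach.} For each $j\in\{0,1\}$, the multiplication map $M_j\colon f\mapsto f\,\omega_j^{1/p_j}$ is a surjective linear isometry from $L^{p_j}(\D,\omega_j\,dA)$ onto $L^{p_j}(\D,dA)$, with inverse $g\mapsto g\,\omega_j^{-1/p_j}$. Similarly $M\colon f\mapsto f\,\omega^{1/p}$ is an isometry from $L^p(\D,\omega\,dA)$ onto $L^p(\D,dA)$. The plan is to transport the interpolation problem through these isometries and invoke the well-known unweighted identity $[L^{p_0}(dA),L^{p_1}(dA)]_\theta = L^p(dA)$ with equality of norms, proved for instance by Riesz--Thorin together with density of simple functions.

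\textbf{Key step: transfer of analytic families.} I would use the definition of the complex interpolation norm via admissible analytic families $F$ on the strip $S=\{z\in\Cc : 0\le \Re z\le 1\}$ with $F(\theta)=f$ and boundary values in $L^{p_j}(\omega_j\,dA)$. Given such an $F$, set
\[
G(z) \;=\; F(z)\cdot \omega_0^{(1-z)/p_0}\,\omega_1^{z/p_1}.
\]
The pointwise modulus of the multiplier on the line $\Re z = j$ equals $\omega_j^{1/p_j}$ (independent of $\Im z$), so $\|G(j+it)\|_{L^{p_j}(dA)} = \|F(j+it)\|_{L^{p_j}(\omega_j dA)}$. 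Hence $G$ is an admissible family for the unweighted pair and $G(\theta)=f\,\omega_0^{(1-\theta)/p_0}\omega_1^{\theta/p_1}=M f$. The inverse transformation $G\mapsto G\,\omega_0^{-(1-z)/p_0}\omega_1^{-z/p_1}$ shows that this correspondence is a bijection between admissible families, preserving the supremum over the two boundary lines. Taking infima yields
\[
\|f\|_{[L^{p_0}(\omega_0),L^{p_1}(\omega_1)]_\theta} \;=\; \|Mf\|_{[L^{p_0}(dA),L^{p_1}(dA)]_\theta}.
\]

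\textbf{Conclusion.} The unweighted interpolation theorem gives $\|Mf\|_{[L^{p_0}(dA),L^{p_1}(dA)]_\theta}=\|Mf\|_{L^p(dA)}$, and by construction $\|Mf\|_{L^p(dA)} = \|f\|_{L^p(\omega\,dA)}$ because the exponents satisfy $\omega^{1/p}=\omega_0^{(1-\theta)/p_0}\omega_1^{\theta/p_1}$. Combining the three equalities proves the theorem with equal norms.

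\textbf{Main obstacle.} The only delicate point is to justify that the auxiliary factor $\omega_0^{(1-z)/p_0}\omega_1^{z/p_1}$ is a legitimate analytic multiplier on $S$: one needs the weights to be strictly positive and measurable so that their complex powers are well defined, and one must verify the boundary continuity/decay conditions required for admissibility are preserved. These are routine under the standing assumption that $\omega_0,\omega_1$ are positive weights, but writing them out carefully is where the bookkeeping lies. The rest of the proof is a purely formal consequence of the unweighted Riesz--Thorin theorem.
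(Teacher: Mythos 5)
The paper offers no proof of this statement: it is imported verbatim as Theorem A with a citation to Stein and Weiss [SW], so there is no internal argument to compare yours against. That said, your reduction to the unweighted couple via the analytic multiplier $m(z)=\omega_0^{(1-z)/p_0}\omega_1^{z/p_1}$ is the standard proof of the complex-interpolation form of the Stein--Weiss theorem; the computation $|m(j+it)|=\omega_j^{1/p_j}$ on the boundary lines, the identity $G(\theta)=f\,\omega^{1/p}$, and the final chain of three equalities are all correct.

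The one step I would not concede as routine bookkeeping is exactly the one you flag, and as literally stated it fails. An admissible family must be a \emph{bounded} continuous map of the closed strip into the sum space, and $m$ (or $m^{-1}$) is in general an unbounded function on $\D$ --- for the exponential weights of this paper $\omega^{-1/2}$ blows up at the boundary --- so $mF$ need not land boundedly in $L^{p_0}(dA)+L^{p_1}(dA)$ even when $F$ is admissible for the weighted couple; your map is therefore not a bijection of admissible families. The standard repair is to compute both interpolation norms on the dense subclass of families of the form $\sum_k \phi_k(z)a_k$ with $\phi_k$ bounded analytic on the strip and $a_k$ simple functions supported where $\omega_0,\omega_1$ are bounded above and below; on that subclass the correspondence $F\mapsto mF$ is exact and norm-preserving, and density (which is where equality rather than mere equivalence of norms is secured) finishes the argument. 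You should also treat the endpoint $p_1=\infty$ separately: the exponent $\theta/p_1$ degenerates to $0$, the space $L^\infty(\D,\omega_1\,dA)$ does not see the weight as a measure, and in the application made in this paper the relevant space is the growth space $L^\infty(v)$ with the weighted essential supremum norm, which requires a correspondingly modified multiplier on the line $\Re z=1$. With these two repairs your argument is complete.
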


With this and the result on bounded projections we can obtain the following result on complex interpolation of large weighted Bergman spaces.

\begin{theorem}
Let $\omega$ be a weight in the class $\mathcal{E}$.  If $1\le p_ 0\le p_ 1\le \infty$ and $0\le \theta \le 1$, then
\[[ A^{p_ 0}(\omega^{p_ 0/2}), A^{p_ 1}(\omega^{p_ 1/2})]=A^p(\omega ^{p/2}),\]
where
\[\frac{1}{p}=\frac{1-\theta}{p_ 0}+\frac{\theta}{p_ 1}.\]
\end{theorem}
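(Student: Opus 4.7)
The plan is to lift the Stein--Weiss interpolation theorem for weighted $L^p$ spaces (Theorem \ref{T-SW}) to Bergman spaces by using the Bergman projection $P_\omega$ as a bounded retract. Specifically, Theorems \ref{projbd} and \ref{BP-2} say that $P_\omega$ is bounded from $L^q(\omega^{q/2})$ onto $A^q(\omega^{q/2})$ for every $1\le q\le\infty$, and Theorem \ref{RF-v1} gives the reproducing identity $f=P_\omega f$ on $A^1(\omega^{1/2})$. This is exactly the data needed for the standard complemented-subspace argument in interpolation theory.

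\emph{Step 1.} Apply Theorem \ref{T-SW} with $\omega_0=\omega^{p_0/2}$ and $\omega_1=\omega^{p_1/2}$. The resulting interpolated weight is $v$ with
\[v^{1/p}=(\omega^{p_0/2})^{(1-\theta)/p_0}(\omega^{p_1/2})^{\theta/p_1}=\omega^{(1-\theta)/2+\theta/2}=\omega^{1/2},\]
so $v=\omega^{p/2}$, and consequently $[L^{p_0}(\omega^{p_0/2}),L^{p_1}(\omega^{p_1/2})]_\theta=L^p(\omega^{p/2})$ with equal norms.

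\emph{Step 2.} Upgrade the identity $f=P_\omega f$ from $A^1(\omega^{1/2})$ (Theorem \ref{RF-v1}) to each $A^q(\omega^{q/2})$, $1\le q\le\infty$, using the embedding $A^q(\omega^{q/2})\hookrightarrow A^1(\omega^{1/2})$. For $1<q<\infty$ this follows from H\"older's inequality on the probability measure $dA$, and for $q=\infty$ from the same together with $\int_\D \omega^{1/2}\,dA<\infty$ (Cauchy--Schwarz and $\omega\in L^1(\D,dA)$). Once this is in place, the reproducing formula holds on every endpoint space.

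\emph{Step 3.} Carry out the retract argument. For the inclusion $[A^{p_0}(\omega^{p_0/2}),A^{p_1}(\omega^{p_1/2})]_\theta\hookrightarrow A^p(\omega^{p/2})$, note that each $A^{p_i}(\omega^{p_i/2})$ is a closed subspace of $L^{p_i}(\omega^{p_i/2})$ (Lemma \ref{subHarmP} forces $L^{p_i}$-convergent sequences of analytic functions to converge uniformly on compact sets, so the limit is analytic), hence by functoriality of complex interpolation the interpolation space embeds into $[L^{p_0},L^{p_1}]_\theta=L^p(\omega^{p/2})$, and analyticity is preserved. For the reverse direction, given $f\in A^p(\omega^{p/2})\subset L^p(\omega^{p/2})=[L^{p_0},L^{p_1}]_\theta$, pick an admissible analytic function $F(\zeta)$ on the strip $\{0\le \Re\zeta\le 1\}$ with $F(\theta)=f$ realizing (up to $\varepsilon$) the $L^p$ interpolation norm, and set $\widetilde F(\zeta)=P_\omega F(\zeta)$. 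The boundedness of $P_\omega$ at both endpoints makes $\widetilde F$ admissible for the $[A^{p_0},A^{p_1}]_\theta$ norm, and $\widetilde F(\theta)=P_\omega f=f$ by the reproducing formula, yielding the opposite inclusion with matching norm estimate.

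\emph{Main obstacle.} The delicate point is the endpoint $p_1=\infty$: one must verify the Stein--Weiss computation in this limiting case (which is classical and covered by Theorem \ref{T-SW} as stated), that $P_\omega$ behaves correctly on $L^\infty(\omega^{1/2})$ (Theorem \ref{BP-2}), and that the reproducing formula extends to $A^\infty(\omega^{1/2})$, all of which were set up precisely so as to make this case go through. Once these are in place the retract argument is routine and yields the stated identification with equivalent norms.
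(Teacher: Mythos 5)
Your proposal is correct and follows essentially the same route as the paper: Stein--Weiss for the weighted $L^p$ scale with the interpolated weight computed to be $\omega^{p/2}$, the trivial inclusion via closedness of $A^{p_i}(\omega^{p_i/2})$ in $L^{p_i}(\omega^{p_i/2})$, and the reverse inclusion by projecting an admissible $L^p$-family with $P_\omega$, using Theorems \ref{projbd}, \ref{BP-2} and the reproducing formula of Theorem \ref{RF-v1} (together with the H\"older embedding $A^p(\omega^{p/2})\subset A^1(\omega^{1/2})$, which the paper uses implicitly). Your Step 2 merely makes explicit a point the paper leaves tacit; there is no substantive difference.
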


\begin{proof}
The inclusion $[ A^{p_ 0}(\omega^{p_ 0/2}), A^{p_ 1}(\omega^{p_ 1/2})]\subset A^p(\omega ^{p/2})$ is a consequence of the definition of complex interpolation, the fact that each $A^{p_ k}(\omega ^{p_ k/2})$ is a closed subspace of $L^{p_ k}(\omega ^{p_ k/2})$ and $\big [ L^{p_ 0}(\omega ^{p_ 0/2}), L^{p_ 1}(\omega ^{p_ 1/2} ) \big ]_{\theta}=L^p(\omega ^{p/2})$. This last assertion follows from Theorem \ref{T-SW}.

On the other hand, if $f\in A^p(\omega ^{p/2})\subset L^p(\omega ^{p/2})$, it follows from Theorem \ref{T-SW} that
\[\big [ L^{p_ 0}(\omega ^{p_ 0/2}), L^{p_ 1}(\omega ^{p_ 1/2}) \big ]_{\theta}=L^p(\omega ^{p/2}).\]
Thus, there exists a function $F_{\zeta}(z)$ ($z\in \D$ and $0\le \Re \,\zeta \le 1$) and a positive constant $C$ such that:
\begin{itemize}
\item[(a)] $F_{\theta}(z)=f(z)$ for all $z\in \D$.

\item[(b)] $\|F_{\zeta}\|_{L^{p_ 0}(\omega ^{p_ 0/2})}\le C$ for all $\Re \,\zeta =0$.

\item[(c)] $\|F_{\zeta}\|_{L^{p_ 1}(\omega ^{p_ 1/2})}\le C$ for all $\Re \,\zeta =1$.
\end{itemize}
Define a function $G_{\zeta}$ by
$G_{\zeta}(z)=P_{\omega} F_{\zeta}(z).$
Due to the reproducing formula in Theorem \ref{RF-v1}, and the boundedness of the Bergman projection, see Theorem \ref{projbd}, we have:
\begin{itemize}
\item[(a)] $G_{\theta}(z)=f(z)$ for all $z\in \D$.

\item[(b)] $\|G_{\zeta}\|_{L^{p_ 0}(\omega ^{p_ 0/2})}\le C$ for all $\Re \,\zeta =0$.

\item[(c)] $\|G_{\zeta}\|_{L^{p_ 1}(\omega ^{p_ 1/2})}\le C$ for all $\Re \,\zeta =1$.
\end{itemize}
Since each function $G_{\zeta}$ is analytic on $\D$, we conclude that $f$ belongs to $ [ A^{p_ 0}(\omega^{p_ 0/2}), A^{p_ 1}(\omega^{p_ 1/2})]$. This completes the proof of the theorem.
\end{proof}

\section{Duality}\label{sec2:2}

As in the case of the standard Bergman spaces, one can use the result just proved on the boundedness of the Bergman projection $P_{\omega}$ in $L^p(\omega ^{p/2})$ to identify the dual space of $A^p (\omega ^{p/2})$. As usual, if $X$ is a normed space, we denote its dual by $X^*$. For a given weight $v$, we introduce the space $A_ 0(v)$ consisting of those functions $f\in A^{\infty}(v)$ with $\lim _{|z|\rightarrow 1^-} v(z) |f(z)|=0$. Clearly, $A_ 0(v)$ is a closed subspace of $A^{\infty}(v)$.
\begin{lemma}\label{LA0}
Let $\omega \in \mathcal{E}$ and $z\in \D$. Then $K_ z\in A_ 0(\omega ^{1/2})$.
\end{lemma}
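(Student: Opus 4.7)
The plan is to reduce both the boundedness assertion $K_z\in A^{\infty}(\omega^{1/2})$ and the boundary vanishing condition defining $A_0(\omega^{1/2})$ to the off-diagonal pointwise kernel estimate of Theorem \ref{RK-PE}. With $z$ fixed, I would split $\D$ into a neighborhood $D(\delta\tau(z))$ of the diagonal (for some $\delta\in(0,m_\tau)$) and its complement, and treat the two regions separately.

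On $D(\delta\tau(z))$ the on-diagonal asymptotic \eqref{RK-Diag} combined with Lemma \ref{RK-norm} gives
\[
\omega(w)^{1/2}|K_z(w)|\lesssim \omega(w)^{1/2}\,\|K_z\|_{A^2(\omega)}\,\|K_w\|_{A^2(\omega)}\asymp \frac{1}{\tau(z)\,\tau(w)\,\omega(z)^{1/2}},
\]
which is bounded on this disc, since $\tau(w)\asymp\tau(z)$ by \eqref{asymptau}. On the complement $\D\setminus D(\delta\tau(z))$ I would apply Theorem \ref{RK-PE} with any fixed $M\ge 2$. Bounding $\min(\tau(z),\tau(w))\le \tau(w)$ and $|z-w|>\delta\tau(z)$ yields
\[
\omega(w)^{1/2}|K_z(w)|\lesssim \frac{1}{\tau(z)\,\omega(z)^{1/2}}\cdot\frac{\tau(w)^{M-1}}{|z-w|^{M}},
\]
and since $|z-w|>\delta\tau(z)$ and $\tau$ is bounded on $\D$, this gives a bound depending only on $z$. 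Combining the two regions proves that $K_z\in A^{\infty}(\omega^{1/2})$.

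For the boundary vanishing I would use that, since $z$ is fixed, whenever $|w|$ is sufficiently close to $1$ one has simultaneously $w\notin D(\delta\tau(z))$ and $|z-w|\ge c(z)>0$. The off-diagonal bound from the previous step then collapses to
\[
\omega(w)^{1/2}|K_z(w)|\le C(z)\,\tau(w)^{M-1},
\]
and condition (A), which gives $\tau(w)\le c_1(1-|w|)$, shows that the right-hand side tends to zero as $|w|\to 1^-$.

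There is no serious obstacle; the whole argument is routine once Theorem \ref{RK-PE} is available. The only points to monitor are choosing $M\ge 2$ so that the factor $\tau(w)^{M-1}$ actually decays on the boundary, and verifying that $\min(\tau(z),\tau(w))$ can be replaced by $\tau(w)$ uniformly on the relevant region, which is immediate since $z$ is fixed and $\tau(w)\to 0$ near $\partial\D$.
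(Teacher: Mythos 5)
Your proposal is correct and follows essentially the same route as the paper: apply Theorem \ref{RK-PE} off the diagonal, absorb the now-harmless factors depending only on $z$, and conclude the boundary decay from $\tau(w)\lesssim 1-|w|$ (the paper fixes $M=2$ and only writes out the vanishing step, the interior boundedness being implicit). The extra verification on $D(\delta\tau(z))$ via \eqref{RK-Diag} and Lemma \ref{RK-norm} is fine but could be replaced by the simple Cauchy--Schwarz bound $|K_z(w)|\le\|K_z\|_{A^2(\omega)}\|K_w\|_{A^2(\omega)}$.
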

\begin{proof}
Let $\delta \in (0,m_{\tau})$. If $|\xi|$ is enough close to $1$, then $\xi \notin D(\delta \tau(z))$, and it follows from the pointwise estimate in Theorem \ref{RK-PE} (taking $M=2$) that
\begin{displaymath}
\omega (\xi)^{1/2} |K_ z(\xi)| \lesssim \|K_ z \|_{A^2(\omega)} \frac{\tau(\xi)}{\tau(z)^2}\rightarrow 0
\end{displaymath}
as $|\xi|\rightarrow 1^{-}$ since $\tau(\xi)\lesssim (1-|\xi|)$. This finishes the proof.
\end{proof}
In particular, since $A_ 0(\omega ^{1/2})\subset A^{\infty}(\omega ^{1/2})\subset A^p(\omega ^{p/2})$, it follows that $K_ z\in A^p(\omega ^{p/2})$ for any $p$. Now we are ready to state and prove the corresponding duality results.
\begin{theorem}\label{Dual-Ber}
Let $\omega \in \mathcal{E}$ and $1<p<\infty$.  The dual space of $A^p(\omega ^{p/2})$ can be identified (with equivalent norms) with $A^{p'}(\omega ^{p'/2})$ under the integral pairing
\[\langle f, g \rangle _{\omega}=\int_{\D} f(z)\,\overline{g(z)}\,\omega (z)\,dA(z).\]
Here $p'$ denotes the conjugate exponent of $p$, that is, $p'=p/(p-1)$.
\end{theorem}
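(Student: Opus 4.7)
The plan is to identify $A^{p'}(\omega^{p'/2})$ with $(A^p(\omega^{p/2}))^*$ via the map $g\mapsto \Lambda_g$, where $\Lambda_g f = \langle f,g\rangle_\omega$. First, the easy direction: for $g\in A^{p'}(\omega^{p'/2})$, splitting the weight as $\omega=\omega^{1/2}\cdot\omega^{1/2}$ and applying H\"older's inequality with exponents $p$ and $p'$ immediately gives $|\langle f,g\rangle_\omega|\le \|f\|_{A^p(\omega^{p/2})}\,\|g\|_{A^{p'}(\omega^{p'/2})}$, so $\Lambda_g$ is bounded with $\|\Lambda_g\|\le \|g\|_{A^{p'}(\omega^{p'/2})}$.

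For the converse, I would show that every $\Lambda \in (A^p(\omega^{p/2}))^*$ arises in this way. Extend $\Lambda$ by Hahn-Banach to a functional $\tilde\Lambda$ on $L^p(\omega^{p/2})=L^p(\D,\omega^{p/2}\,dA)$ with the same norm. Standard $L^p$-duality applied to the measure $\omega^{p/2}\,dA$ produces $h\in L^{p'}(\D,\omega^{p/2}\,dA)$ with $\tilde\Lambda f = \int_\D f\,\overline{h}\,\omega^{p/2}\,dA$. Setting $\phi = h\,\omega^{p/2-1}$, the identity $p'(p-1) = p$ gives $\phi \in L^{p'}(\omega^{p'/2})$ with $\|\phi\|_{L^{p'}(\omega^{p'/2})} = \|h\|_{L^{p'}(\omega^{p/2})} = \|\Lambda\|$, and
\[\Lambda f = \int_\D f\,\overline{\phi}\,\omega\,dA, \qquad f\in A^p(\omega^{p/2}).\]

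The key step is to replace $\phi$ by an analytic representative. Since $\D$ has finite area, H\"older gives the inclusion $A^p(\omega^{p/2})\subset A^1(\omega^{1/2})$, so the reproducing formula $f=P_\omega f$ from Theorem \ref{RF-v1} is available. Substituting this into the representation and interchanging the order of integration via Fubini, I would rewrite
\[\Lambda f = \int_\D f(\xi)\,\overline{P_\omega\phi(\xi)}\,\omega(\xi)\,dA(\xi)=\langle f,g\rangle_\omega,\]
where $g = P_\omega \phi$. By Theorem \ref{projbd}, $g\in A^{p'}(\omega^{p'/2})$ with $\|g\|_{A^{p'}(\omega^{p'/2})}\lesssim \|\phi\|_{L^{p'}(\omega^{p'/2})}=\|\Lambda\|$, yielding the reverse norm equivalence. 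Injectivity of $g\mapsto \Lambda_g$ follows by testing against reproducing kernels: Lemma \ref{LA0} shows $K_z\in A_0(\omega^{1/2})\subset A^p(\omega^{p/2})$, and since $g\in A^{p'}(\omega^{p'/2})\subset A^1(\omega^{1/2})$ the reproducing formula gives $g(z) = \overline{\langle K_z,g\rangle_\omega}$, so $\Lambda_g\equiv 0$ forces $g\equiv 0$.

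The main obstacle is the application of Fubini in the key step. I would handle it by introducing the positive operator $T\phi(\xi) = \int_\D |K_\xi(z)|\,|\phi(z)|\,\omega(z)\,dA(z)$ and noting that the proof of Theorem \ref{projbd} depends only on the integral bound for $|K_z(\xi)|$ provided by Lemma \ref{kernelestimat}; hence $T$ is bounded on $L^{p'}(\omega^{p'/2})$. Applying H\"older once more then majorizes the iterated integral of absolute values by $\|f\|_{A^p(\omega^{p/2})}\,\|T\phi\|_{L^{p'}(\omega^{p'/2})}<\infty$, which legitimizes the interchange of integrals and closes the argument.
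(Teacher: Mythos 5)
Your proposal is correct and follows essentially the same route as the paper: Hahn--Banach plus $L^{p}$-duality to produce $h\in L^{p'}(\omega^{p'/2})$, then $g=P_{\omega}h$ via Theorem \ref{projbd}, with the identity $\Lambda f=\langle f,g\rangle_{\omega}$ obtained from the reproducing formula of Theorem \ref{RF-v1} and the self-adjointness of $P_{\omega}$ (i.e.\ the Fubini interchange), and uniqueness by testing on kernels. The only difference is that you spell out the justification of the Fubini step via the positive operator $T$ and Lemma \ref{kernelestimat}, which the paper leaves implicit; that is a harmless (indeed welcome) elaboration, not a change of method.
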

\begin{proof}
Given a function $g\in A^{p'}(\omega^{p'/2}),$  H\"older's inequality implies that the linear functional $\Lambda_ g:A^p(\omega^{p/2}) \rightarrow \mathbb{C}$ given by
\begin{displaymath}
 \Lambda_{g}(f)= \int_{\D} f(z)\,\overline{g(z)}\,\omega (z)\,dA(z)
\end{displaymath}
is bounded with $\|\Lambda _ g \|\le \|g\|_{A^{p'}(\omega ^{p'/2})}$.
Conversely, let $\Lambda \in \big (A^p(\omega^{p/2})\big )^*$. By Hahn-Banach, we can  extend $\Lambda$ to an element $\widetilde{\Lambda}\in ( L^p(\omega^{p/2}))^*$ with $\|\widetilde{\Lambda}\|= \|\Lambda\|.$ By the $L^p$ Riesz representation theorem there exists $H \in L^{p'}(\omega^{p/2})$ with $\|H\|_{L^{p'}(\omega^{p/2})}=\|\widetilde{\Lambda}\|= \|\Lambda\|$ such that
\[\widetilde{\Lambda} (f)= \int_{\D}f(z)\,\overline{H(z)}\,\omega(z)^{p/2}\, dA(z),\]
for every $f\in A^p(\omega^{p/2}).$ Consider the function $h(z)=H(z)\,\omega(z)^{p/2-1}$. Then $h\in L^{p'}(\omega^{p'/2})$ with
\[\|h\|_{L^{p'}(\omega^{p'/2})}=\|H\|_{L^{p'}(\omega^{p/2})}=\|\Lambda\|,\]
and
\[\Lambda (f)=\widetilde{\Lambda} (f)= \int_{\D}f(z)\,\overline{h(z)}\,\omega(z)\,dA(z),\qquad f\in A^p(\omega^{p/2}).\]
Let $g = P_{\omega}h.$ By Theorem \ref{projbd}, $g\in A^{p'}(\omega^{p'/2})$ with
 $$\|g\|_{A^{p'}(\omega^{p'/2})}=\|P_{\omega} h\|_{A^{p'}(\omega^{p'/2})}\lesssim \|h\|_{L^{p'}(\omega^{p'/2})}=\|\Lambda\|.$$
From Fubini's theorem  it is easy to see that $P_{\omega}$ is self-adjoint. By Theorem \ref{RF-v1}, the reproducing formula $f=P_{\omega}f$ holds for every $f\in  A^{p}(\omega^{p/2})\subset A^1(\omega ^{1/2})$. Then, one has
\begin{displaymath}
\begin{split}
\Lambda(f)&=\int_{\D} f(z) \,\overline{h(z)}\, \omega(z)\, dA(z)=\langle P_{\omega} f,h \rangle_{\omega}
\\
&=\langle f,P_{\omega} h \rangle_{\omega}= \int_{\D} f(z) \,\overline{g(z)}\, \omega(z)\, dA(z)=\Lambda_ g (f).
\end{split}
\end{displaymath}
Finally, the function $g$ is unique. Indeed, if there is another function $\widetilde{g}\in A^{p'}(\omega ^{p'/2})$ with $\Lambda (f)=\Lambda_ g(f)=\Lambda_{\widetilde{g}}(f)$ for $f\in A^p(\omega ^{p/2})$, then by testing the previous identity on the reproducing kernels $K_ a$ for each $a\in \D$, and using the reproducing formula, we obtain
$$g(a)=\overline{\Lambda_ g (K_ a)}=\overline{\Lambda_ {\widetilde{g}} (K_ a)}=\widetilde{g} (a),\qquad a\in \D.$$
Thus, any bounded linear functional $\Lambda$ on $A^p(\omega^{p/2})$ is of the form $\Lambda=\Lambda_ g$ for some unique $g\in A^{p'}(\omega^{p'/2})$ and, furthermore,
$$\|\Lambda \|\asymp \|g\|_{A^{p'}(\omega^{p'/2})}.$$
The proof is complete.
\end{proof}

\begin{theorem}\label{dual1}
Let $\omega\in \mathcal{E}$. The dual space of $A^1(\omega ^{1/2})$ can be identified (with equivalent norms) with $A^{\infty}(\omega ^{1/2})$ under the integral pairing $\langle f,g \rangle _{\omega}$.
\end{theorem}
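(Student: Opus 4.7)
The plan is to follow the same scheme as the proof of Theorem \ref{Dual-Ber} for $1<p<\infty$, replacing the $L^p$-$L^{p'}$ Riesz representation by the $L^1$-$L^\infty$ duality. The easy direction is to check that every $g \in A^\infty(\omega^{1/2})$ induces, via the pairing $\langle\,\cdot\,, g\rangle_\omega$, a bounded linear functional on $A^1(\omega^{1/2})$; this is immediate from the trivial estimate
\begin{displaymath}
\left|\int_{\D} f\,\overline{g}\,\omega\, dA\right|\le \|g\|_{A^\infty(\omega^{1/2})}\int_{\D}|f|\,\omega^{1/2}\, dA = \|g\|_{A^\infty(\omega^{1/2})}\|f\|_{A^1(\omega^{1/2})},
\end{displaymath}
so that the induced functional $\Lambda_g$ has norm at most $\|g\|_{A^\infty(\omega^{1/2})}$.

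For the converse, given $\Lambda \in (A^1(\omega^{1/2}))^*$, I would first extend $\Lambda$ via Hahn-Banach to a bounded functional $\widetilde\Lambda$ on the ambient space $L^1(\D,\omega^{1/2}dA)$ with the same norm, and then invoke the classical $L^1$-$L^\infty$ Riesz representation to produce $h \in L^\infty(\D)$ with $\|h\|_\infty = \|\Lambda\|$ satisfying $\widetilde\Lambda(F) = \int_\D F\,\overline{h}\,\omega^{1/2}\,dA$. Setting $g_0(z) := h(z)\,\omega(z)^{-1/2}$ yields an element of $L^\infty(\omega^{1/2})$, and $\Lambda(f) = \langle f, g_0\rangle_\omega$ for every $f \in A^1(\omega^{1/2})$. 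Since $g_0$ need not be holomorphic, I would produce the analytic representative by projecting: let $g := P_\omega g_0$, which lies in $A^\infty(\omega^{1/2})$ with $\|g\|_{A^\infty(\omega^{1/2})}\lesssim \|\Lambda\|$ by Theorem \ref{BP-2}.

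The identity $\Lambda = \Lambda_g$ would then be obtained by combining the reproducing formula $f = P_\omega f$ from Theorem \ref{RF-v1} (valid on all of $A^1(\omega^{1/2})$) with the formal self-adjointness of $P_\omega$ with respect to $\langle\,\cdot\,,\,\cdot\,\rangle_\omega$:
\begin{displaymath}
\Lambda(f) = \langle f, g_0\rangle_\omega = \langle P_\omega f, g_0\rangle_\omega = \langle f, P_\omega g_0\rangle_\omega = \langle f, g\rangle_\omega.
\end{displaymath}
The swap of integrals in the self-adjointness step is a Fubini argument whose absolute integrability hypothesis is furnished by Lemma \ref{kernelestimat}, since the double integral of $|f(z)||g_0(\zeta)||K_z(\zeta)|\omega(z)\omega(\zeta)$ is controlled by $\|g_0\|_{L^\infty(\omega^{1/2})}\|f\|_{A^1(\omega^{1/2})}$.

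Finally, uniqueness of $g$ would be established by testing on the reproducing kernels: Lemma \ref{LA0} places $K_a$ in $A_0(\omega^{1/2})\subset A^\infty(\omega^{1/2})$, which is contained in $A^1(\omega^{1/2})$ because $dA$ is a finite measure on $\D$. The reproducing formula then gives $g(a) = \overline{\Lambda_g(K_a)}$, so any two representatives of $\Lambda$ necessarily agree, and combining the two-sided norm bounds produces the equivalence of norms. The only delicate technical point in this plan is the Fubini justification in the self-adjointness identity, but this is really no obstacle since it is essentially the content of Lemma \ref{kernelestimat}.
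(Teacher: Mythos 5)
Your argument is correct, and it takes a genuinely different route from the paper's. You transplant the scheme of the $1<p<\infty$ case (Theorem \ref{Dual-Ber}) to $p=1$: Hahn--Banach extension to $L^1(\omega^{1/2})$, the $L^1$--$L^\infty$ Riesz representation to get $g_0\in L^\infty(\omega^{1/2})$ with $\Lambda=\langle\,\cdot\,,g_0\rangle_\omega$, the boundedness of $P_\omega$ on $L^\infty(\omega^{1/2})$ (Theorem \ref{BP-2}) to produce the analytic representative $g=P_\omega g_0$, and finally the reproducing formula of Theorem \ref{RF-v1} combined with the self-adjointness of $P_\omega$ (whose Fubini justification is, as you say, exactly the integrability supplied by Lemma \ref{kernelestimat} together with $|g_0|\omega^{1/2}\le\|g_0\|_{L^\infty(\omega^{1/2})}$). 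The paper instead bypasses Hahn--Banach and the measure-theoretic representation: it restricts $\Lambda$ to the Hilbert space $A^2(\omega)\subset A^1(\omega^{1/2})$, obtains $g\in A^2(\omega)$ representing $\Lambda$ there by the Riesz representation theorem for Hilbert spaces, reads off the membership $g\in A^\infty(\omega^{1/2})$ directly from $|g(z)|=|\Lambda(K_z)|\le\|\Lambda\|\,\|K_z\|_{A^1(\omega^{1/2})}\lesssim\|\Lambda\|\,\omega(z)^{-1/2}$ (Lemma \ref{kernelestimat} again), and then extends the identity $\Lambda(f)=\langle f,g\rangle_\omega$ to all of $A^1(\omega^{1/2})$ by the density of $A^2(\omega)$ in $A^1(\omega^{1/2})$, which is Corollary \ref{C-RF} together with Theorem \ref{RF-v1}. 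Both proofs therefore rest on the same deep ingredients --- the kernel estimate of Lemma \ref{kernelestimat} and the reproducing formula of Theorem \ref{RF-v1} --- the difference being that you invoke the reproducing formula directly on each $f$, whereas the paper invokes it through the density statement. Your version has the merit of being uniform with the proof for $1<p<\infty$ and of actually putting Theorem \ref{BP-2} to work; the paper's is slightly shorter because the Hilbert-space structure hands it the representing function and its pointwise growth bound in one stroke, with no need for a projection or a self-adjointness/Fubini step. All the auxiliary facts you use (that $\omega^{1/2}\,dA$ is a finite measure so the $L^1$--$L^\infty$ duality applies, that $A^\infty(\omega^{1/2})\subset A^1(\omega^{1/2})$ since $|f|\omega^{1/2}$ is bounded and $dA$ is finite, and that $K_a\in A_0(\omega^{1/2})$ by Lemma \ref{LA0}) are available, so the argument is complete.
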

\begin{proof}
If  $g\in A^{\infty}(\omega^{1/2}),$ clearly the linear functional $\Lambda_ g:A^1(\omega^{1/2}) \rightarrow \mathbb{C}$ given by
$\Lambda_{g}(f)= \langle f,g\rangle _{\omega}$
is bounded with $\|\Lambda _ g \|\le \|g\|_{A^{\infty}(\omega ^{1/2})}$.

Conversely, let $\Lambda \in \big (A^1(\omega^{1/2})\big )^*$.
In particular, $\Lambda$ is a bounded linear functional in $A^2(\omega)$, and hence, there exists a unique function $g\in A^2(\omega)$ with $\Lambda (f)=\langle f,g \rangle _{\omega}$ whenever $f$ is in $A^2(\omega)$. Then $g(z)=\langle g,K_ z \rangle _{\omega}=\overline{\Lambda (K_ z)}$, and by Lemma \ref{kernelestimat} we get
$$|g(z)|=|\Lambda (K_ z)\|\le \|\Lambda\|\cdot \|K_ z\|_{A^1(\omega ^{1/2})}\lesssim \|\Lambda \| \,\omega (z)^{-1/2},$$
which shows that $g$ actually belongs to $A^{\infty}(\omega ^{1/2})$. Finally, by Theorem \ref{RF-v1} and Corollary \ref{C-RF}, $A^2(\omega)$ dense in $A^1(\omega ^{1/2})$ and therefore we have $\Lambda (f)=\langle f,g \rangle _{\omega}$ for all functions $f$ in $A^1(\omega ^{1/2})$.
\end{proof}

\begin{coro}
Let $\omega\in \mathcal{E}$. The set $E$ of finite linear combinations of reproducing kernels is dense in $A^p(\omega ^{p/2})$, $1\le p<\infty$.
\end{coro}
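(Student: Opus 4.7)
The plan is to argue by contradiction using the Hahn--Banach theorem together with the duality theorems just proved. Assume $E$ is not dense in $A^p(\omega^{p/2})$. Then Hahn--Banach supplies a nonzero functional $\Lambda\in \bigl(A^p(\omega^{p/2})\bigr)^*$ vanishing on $E$; in particular $\Lambda(K_z)=0$ for every $z\in\D$. I would then invoke Theorem \ref{Dual-Ber} (for $1<p<\infty$) or Theorem \ref{dual1} (for $p=1$) to represent $\Lambda = \Lambda_g$ under the pairing $\langle\cdot,\cdot\rangle_\omega$, for a unique nonzero $g$ belonging to $A^{p'}(\omega^{p'/2})$ or to $A^\infty(\omega^{1/2})$ respectively.

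The second step is to verify that the reproducing formula of Theorem \ref{RF-v1} applies to $g$, which requires $g\in A^1(\omega^{1/2})$. For $1<p<\infty$, Hölder's inequality together with the finiteness of the normalized area of $\D$ yields
\[\int_\D |g(z)|\,\omega(z)^{1/2}\,dA(z)\le \left(\int_\D |g(z)|^{p'}\omega(z)^{p'/2}\,dA(z)\right)^{1/p'}|\D|^{1/p}<\infty,\]
while for $p=1$ the function $|g|\omega^{1/2}$ is bounded on $\D$ and hence $g\in A^1(\omega^{1/2})$ automatically. Thus $P_\omega g=g$.

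The final step is to use the Hermitian symmetry $\overline{K_z(\xi)}=K_\xi(z)$ of the Bergman kernel to compute
\[\overline{\Lambda(K_z)}=\overline{\int_\D K_z(\xi)\overline{g(\xi)}\omega(\xi)\,dA(\xi)}=\int_\D g(\xi)\overline{K_z(\xi)}\omega(\xi)\,dA(\xi)=P_\omega g(z)=g(z).\]
Since $\Lambda(K_z)=0$ for every $z\in\D$, this forces $g\equiv 0$, contradicting $\Lambda\ne 0$, and we conclude that $E$ is dense. I expect no real obstacles: the only point requiring attention is the embedding $A^{p'}(\omega^{p'/2})\subset A^1(\omega^{1/2})$, which is needed to validate the reproducing formula for $g$, and this follows cleanly from Hölder's inequality as shown above.
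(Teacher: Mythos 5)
Your proposal is correct and follows essentially the same route as the paper: the paper also reduces the claim via Hahn--Banach and the duality theorems to showing that any $g$ (in $A^{p'}(\omega^{p'/2})$, or $A^{\infty}(\omega^{1/2})$ when $p=1$) annihilating all kernels must vanish, and then concludes from $g(z)=P_{\omega}g(z)=\langle g,K_z\rangle_{\omega}=0$. Your extra care in checking $A^{p'}(\omega^{p'/2})\subset A^1(\omega^{1/2})$ so that Theorem \ref{RF-v1} applies to $g$ is a detail the paper leaves implicit, but it is the same argument.
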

\begin{proof}
Since $E$ is a linear subspace of $A^p(\omega ^{p/2})$, by standard functional analysis and the duality results in Theorems \ref{Dual-Ber} and \ref{dual1}, it is enough to prove that $g\equiv 0$ if $g\in A^{p'}(\omega^{p'/2})$ satisfies $\langle f,g \rangle _{\omega}=0$ for each $f$ in $E$ (with $p'$ being the conjugate exponent of $p$, and $g\in A^{\infty}(\omega ^{1/2})$ if $p=1$). But, taking $f=K_ z$ for each $z\in \D$ and using the reproducing formula in Theorem \ref{RF-v1}, we get
$g(z)=P_{\omega} g(z)=\langle g,K_ z \rangle _{\omega}=0,$ for each $z\in \D$.
This finishes the proof.
\end{proof}
The next result identifies the predual of $A^{1}(\omega ^{1/2})$.
\begin{theorem}\label{dual0}
Let $\omega\in \mathcal{E}$. Under the integral pairing $\langle f,g \rangle_{\omega}$, the dual space of  $A_{0}(\omega ^{1/2})$ can be identified (with equivalent norms) with $A^{1}(\omega ^{1/2})$.
\end{theorem}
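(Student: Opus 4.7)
The strategy is to mimic the Hahn--Banach plus Riesz-representation argument from Theorem \ref{dual1}, but with the $L^2$-Riesz step replaced by Riesz--Markov representation on a space of continuous functions; the reproducing formula (Theorem \ref{RF-v1}) and the integral estimate for the kernel (Lemma \ref{kernelestimat}) then convert the resulting measure into the required element of $A^1(\omega^{1/2})$.

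The forward direction is immediate: for $g\in A^1(\omega^{1/2})$ the pointwise bound $|f(z)|\omega(z)^{1/2}\le \|f\|_{A^{\infty}(\omega^{1/2})}$ valid on $A_0(\omega^{1/2})$ gives
\[|\Lambda_g(f)|\le \|f\|_{A^{\infty}(\omega^{1/2})}\|g\|_{A^1(\omega^{1/2})},\]
so $\Lambda_g\in (A_0(\omega^{1/2}))^*$ with norm controlled by $\|g\|_{A^1(\omega^{1/2})}$.

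For the converse, I would first note that $f\mapsto f\omega^{1/2}$ embeds $A_0(\omega^{1/2})$ isometrically as a closed subspace of $C_0(\D)$, the space of continuous functions on $\D$ vanishing at the boundary. Given $\Lambda\in (A_0(\omega^{1/2}))^*$, Hahn--Banach extends it to $C_0(\D)$ with the same norm, and Riesz--Markov then produces a finite complex Borel measure $\mu$ on $\D$ with $\|\mu\|\lesssim \|\Lambda\|$ such that
\[\Lambda(f)=\int_{\D}f(z)\,\omega(z)^{1/2}\,d\mu(z),\qquad f\in A_0(\omega^{1/2}).\]
Guided by the identity $\overline{g(z)}=\Lambda(K_z)$ that any representing $g$ would have to satisfy (this is how I discover the formula, not how I prove it), I propose the candidate
\[g(z):=\int_{\D}K_w(z)\,\omega(w)^{1/2}\,d\overline{\mu(w)},\qquad z\in \D.\]
Differentiation under the integral (justified by the kernel estimates from Section \ref{sec2:1}) shows $g\in H(\D)$, and Fubini together with Lemma \ref{kernelestimat} would give
\[\|g\|_{A^1(\omega^{1/2})}\le \int_{\D}\omega(w)^{1/2}\!\!\int_{\D}|K_w(z)|\,\omega(z)^{1/2}dA(z)\,d|\mu|(w)\lesssim \|\mu\|\lesssim \|\Lambda\|.\]

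To finish, I would verify $\Lambda = \Lambda_g$ on $A_0(\omega^{1/2})$. A second application of Fubini, combined with the symmetry $\overline{K_w(z)}=K_z(w)$, rewrites
\[\langle f,g\rangle_\omega=\int_{\D}\omega(w)^{1/2}\,P_\omega f(w)\,d\mu(w).\]
Since $A_0(\omega^{1/2})\subset A^{\infty}(\omega^{1/2})\subset A^1(\omega^{1/2})$ (the last inclusion following from $|f|\omega^{1/2}\le \|f\|_{A^\infty(\omega^{1/2})}$ and the finiteness of $dA$), Theorem \ref{RF-v1} yields $P_\omega f=f$ for every such $f$, whence $\langle f,g\rangle_\omega=\Lambda(f)$. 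Uniqueness of $g$ then comes out by testing on the kernels $K_a\in A_0(\omega^{1/2})$ of Lemma \ref{LA0} and invoking the reproducing formula once more. The only nontrivial step I anticipate is the justification of the two Fubini applications, but Lemma \ref{kernelestimat}, together with the finiteness of $\mu$ and the essential boundedness of $f\omega^{1/2}$, supplies precisely the absolute integrability needed, so no ingredient beyond what has already been developed is required.
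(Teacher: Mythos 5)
Your proposal is correct and follows essentially the same route as the paper: embed $A_0(\omega^{1/2})$ into $C_0(\D)$ via $f\mapsto f\omega^{1/2}$, extend by Hahn--Banach, represent by a finite measure, define $g$ through the kernel, and control $\|g\|_{A^1(\omega^{1/2})}$ and the identity $\Lambda=\Lambda_g$ with Lemma \ref{kernelestimat} and Fubini. The only (immaterial) deviation is that you invoke Theorem \ref{RF-v1} via the inclusion $A_0(\omega^{1/2})\subset A^1(\omega^{1/2})$ to get $P_\omega f=f$, whereas the paper uses the more elementary observation that $A_0(\omega^{1/2})\subset A^2(\omega)$, where the reproducing formula is automatic.
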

\begin{proof}
If  $g\in A^{1}(\omega^{1/2}),$ clearly
$\Lambda_{g}(f)= \langle f,g\rangle _{\omega}$ defines a bounded linear functional in $A_{0}(\omega ^{1/2})$ with $\|\Lambda _ g \|\le \|g\|_{A^{1}(\omega ^{1/2})}$. Conversely, assume that $\Lambda \in \big (A_ 0(\omega ^{1/2})\big )^*$. Consider the space $X$ that consists of functions of the form $h=f\omega^{1/2}$ with $f\in A_ 0(\omega ^{1/2})$. Clearly $X$ is a subspace of $C_ 0(\D)$ (the space of all continuous functions vanishing at the boundary) and $T(h)=\Lambda (\omega^{-1/2} h)=\Lambda (f)$ defines a bounded linear functional on $X$ with $\|T\|=\|\Lambda\|$. By Hahn-Banach, $T$ has an extension $\widetilde{T}\in \big (C_ 0(\D)\big )^*$  with $\|\widetilde{T}\|=\|T\|$. Hence, by Riesz representation theorem, there is a measure $\mu\in \mathcal{M}(\D)$ (the Banach space of all complex Borel measures $\mu$ equipped with the variation norm $\|\mu\|_{_\mathcal{M}}$) with $\|\mu\|_{_\mathcal{M}}=\|T\|$ such that
$$T(h)=\widetilde{T}(h)=\int_{\D} h(\zeta)\,d\mu(\zeta),\qquad h\in X,$$
or
$$\Lambda (f)=\int_{\D} f(\zeta)\,\omega(\zeta)^{1/2}\,d\mu(\zeta),\qquad f\in A_ 0(\omega^{1/2}).$$
Consider the function $g$ defined on the unit disk by
$$\overline{g(z)}=\int_{\D} K_ z(\zeta)\,\omega(\zeta)^{1/2}\,d\mu(\zeta),\qquad z\in \D.$$
Clearly $g$ is analytic on $\D$ and, by Fubini's theorem and Lemma \ref{kernelestimat}, we have
\begin{displaymath}
\begin{split}
\|g \|_{A^1(\omega^{1/2})} &\le \int_{\D} \left (\int_{\D} |K_ z(\zeta)|\,\omega (\zeta)^{1/2}\,d |\mu|(\zeta)\right ) \omega (z)^{1/2} dA(z)
\\
& =\int_{\D} \left (\int_{\D} |K_{\zeta}(z)| \,\omega (z)^{1/2}\,dA(z) \right ) \omega (\zeta)^{1/2}\,d |\mu|(\zeta)
\\
&\lesssim |\mu| (\D)=\|\mu\|_{_\mathcal{M}}=\|\Lambda\|,
\end{split}
\end{displaymath}
proving that $g$ belongs to $A^1(\omega ^{1/2})$. Now, since $A_ 0(\omega ^{1/2})\subset A^2(\omega)$, the reproducing formula $f(\zeta)=\langle f, K_ {\zeta} \rangle _{\omega}$ holds for all $f\in A_{0}(\omega ^{1/2})$. This and Fubini's theorem yields
\begin{displaymath}
\begin{split}
\Lambda_ g(f)=\langle f,g \rangle _{\omega}&=\int_{\D} f(z)\,\left (\int_{\D} K_ z(\zeta)\,\omega (\zeta)^{1/2}\,d\mu(\zeta) \right ) \,\omega (z) \,dA(z)
\\
&=\int_{\D} \left ( \int_{\D} f(z)\,\overline{K_{\zeta}(z)} \,\omega (z)\,dA(z)\right )\omega(\zeta)^{1/2}\, d\mu(\zeta)
\\
&=\int_{\D} f(\zeta)\,\omega(\zeta)^{1/2}\, d\mu(\zeta)=\Lambda (f).
\end{split}
\end{displaymath}
By Lemma \ref{LA0} and the reproducing formula in Theorem \ref{RF-v1}, the function $g$ is uniquely determined by the identity $g(z)=\overline{\Lambda (K_ z)}$. This completes the proof.
\end{proof}
For the case of normal weights, the analogues of Theorems \ref{dual1} and \ref{dual0} were obtained by Shields and Williams in \cite{ShW}. They also asked what happens with the exponential weights, problem that is solved in the present paper.

\section{Concluding remarks}

There is still plenty of work to do for a better understanding of the theory of large weighted Bergman spaces, and several natural problems are waiting for a further study or a complete solution: atomic decomposition, coefficient multipliers, zero sets, etc. We hope that the methods developed here will be of some help in order to attach the previous mentioned problems.

\end{document}